\newtheorem{thm}{Theorem}[section]
\newtheorem{lem}[thm]{Lemma}
\newtheorem{prop}[thm]{Proposition}
\theoremstyle{definition}
\newtheorem{notn}[thm]{Notation}
\newtheorem{defn}[thm]{Definition}
\newtheorem{rem}[thm]{Remark}
\newtheorem{ex}[thm]{Example}
\newcommand{\cF}{{\mathcal F}}
\newcommand{\Res}{\operatorname{Res}}
\newcommand{\ep}{\varepsilon}
\newcommand{\N}{\mathbb{N}}
\newcommand{\Z}{\mathbb{Z}}
\renewcommand{\pmod}[1]{{\ifmmode\text{\rm\ (mod~$#1$)}\else\discretionary{}{}{\hbox{ }}\rm(mod~$#1$)\fi}}
\newcommand{\faf}{{\mathfrak a}_f}
\newcommand{\fbf}{{\mathfrak b}_f}
\begin{document}

\title{Oscillation results for the summatory functions of fake mu's}
\author{Greg Martin}
\address{Department of Mathematics \\ University of British Columbia\\ Vancouver  V6T 1Z2 \\ Canada}
\email{gerg@math.ubc.ca}
\author{Chi Hoi Yip}
\address{School of Mathematics\\ Georgia Institute of Technology\\ Atlanta, GA 30332\\ United States}
\email{cyip30@gatech.edu}

\keywords{arithmetic functions, oscillations, Riemann zeta function}
\subjclass[2020]{Primary 11N37; Secondary 11A25, 11M06, 11M26}

\begin{abstract}
Mossinghoff, Trudgian, and the first author~\cite{MMT23} recently introduced a family of arithmetic functions called ``fake~$\mu$'s'', which are multiplicative functions for which there is a $\{-1,0,1\}$-valued sequence $(\ep_j)_{j=1}^{\infty}$ such that $f(p^j) = \ep_j$ for all primes~$p$. They investigated comparative number-theoretic results for fake~$\mu$'s and in particular proved oscillation results at scale $\sqrt{x}$ for the summatory functions of fake~$\mu$'s with $\ep_1=-1$ and $\ep_2=1$. In this paper, we establish new oscillation results for the summatory functions of all nontrivial fake~$\mu$'s 
at scales $x^{1/2\ell}$ where~$\ell$ is a positive integer (the ``critical index'') depending on~$f$; for $\ell=1$ this recovers the oscillation results in~\cite{MMT23}.
Our work also recovers results on the indicator functions of powerfree and powerfull numbers; we generalize techniques applied to each of these examples to extend to all fake $\mu$'s.
\end{abstract}

\maketitle

\section{Introduction}

A major topic in comparative prime number theory is the behavior of summatory functions of various multiplicative functions. For example, let $\mu(n)$ be the M\"obius function, and let $M(x) = \sum_{n \leq x} \mu(n)$ be its summatory function. In 1897, Mertens~\cite{1897.Mertens} conjectured that $|M(x)|\leq \sqrt{x}$ for all $x\geq1$, an assertion that subsequently became known as the \emph{Mertens conjecture}.
Similarly, let $\lambda(n) =(-1)^{\Omega(n)}$ be the Liouville function, and let $L(x) = \sum_{n \leq x}\lambda(n)$ be its summatory function. In 1919, P\'olya~\cite{1919.Polya} asked whether $L(x) \le 0$ holds for all $x$, a question that became known as the \emph{P\'olya problem} (often mistakenly called ``P\'olya's conjecture'').

One of the motivations for studying these problems was the fact that a positive answer to either would imply the Riemann hypothesis (RH), as P\'olya noted for $L(x)$ in~\cite{1919.Polya}. Indeed, by 1942 it was ``well known'', as reported by Ingham~\cite{I42}, that both RH and the simplicity of all zeros of the Riemann zeta function $\zeta(s)$ would follow from either of $M(x)/\sqrt x$ or $L(x)/\sqrt x$ being bounded either above or below by an absolute constant. However, Ingham showed that any of these one-sided bounds would also imply that there were infinitely many linear relations with integer coefficients among the positive imaginary parts of the zeros of $\zeta(s)$, which cast doubt upon both the Mertens conjecture and a positive answer to the P\'olya problem.

The P\'olya problem was first resolved in the negative by Haselgrove~\cite{H58}, who showed that $L(x)$ changes sign infinitely often.
Similarly, the Mertens conjecture was first disproved by Odlyzko and te Riele~\cite{OtR85}. Currently the best known lower bound on $\limsup_{x\to \infty} L(x)/\sqrt{x}$ and the best known upper bound on $\liminf_{x\to \infty} L(x)/\sqrt{x}$ are due to Mossinghoff and Trudgian~\cite{MT17}, and the corresponding best known bounds for $M(x)/\sqrt{x}$ are due to Hurst~\cite{H18}. We still do not know how to disprove either $L(x) \ll \sqrt{x}$ or $M(x) \ll \sqrt{x}$ or even any of the corresponding one-sided bounds.
The Mertens conjecture and the P\'olya problem motivated substantial work in comparative prime number theory; we refer the reader to an annotated bibliography for comparative prime number theory~\cite{ABCPNT} for further sources.

Recently, Mossinghoff, Trudgian, and the first author~\cite{MMT23} developed comparative number-theoretic results for a family of arithmetic functions they called ``fake~$\mu$'s'', which are multiplicative functions~$f$ such that for each positive integer~$j$, there is a constant $\ep_j \in \{-1,0,1\}$ such that $f(p^j) = \ep_j$ holds for all primes~$p$. We say~$f$ is \emph{defined via the sequence $(\ep_j)_{j=1}^{\infty}$}, and throughout this paper, we always identify~$f$ with its defining sequence $(\ep_j)_{j=1}^{\infty}$. Certainly fake~$\mu$'s include the M\"obius function (the real $\mu$!)~and the Liouville function. These authors focused on the ``persistent bias" and ``apparent bias", at the scale of~$\sqrt{x}$, of the summatory function of a fake~$\mu$. In particular, they showed~\cite[Theorem~3]{MMT23} that if~$f$ is a fake~$\mu$ with $\ep_1=-1$ and $\ep_2=1$, then its summatory function $F_f(x)$ satisfies
\begin{equation} \label{MMT}
F_f(x)-{\mathfrak a}\sqrt{x}= \Omega_{\pm} (\sqrt{x}),
\end{equation}
where the apparent bias~${\mathfrak a}$ is twice the residue at~$s= \frac12$ of the meromorphic function defined by the Dirichlet series $\sum_{n=1}^\infty f(n)n^{-s}$.
In other words, $F_f(x)-{\mathfrak a}\sqrt{x}$ is infinitely often larger than some positive constant times~$\sqrt x$ and also infinitely often smaller than some negative constant times~$\sqrt x$ (these constants can depend on~$f$). There is a small gap in their proof, but it can be filled with a few additional observations; see Example~\ref{rem:gap}. 

These authors included the comment that ``a function with no bias at scale~$\sqrt{x}$ could well see one at a smaller scale". This remark motivates the study of the current paper, namely, to establish new oscillation results for the summatory functions of a larger family of fake~$\mu$'s, with oscillations at potentially smaller scales than~$\sqrt x$. Indeed, we will unconditionally establish such an oscillation result for the summatory function of every nontrivial fake~$\mu$ (see Theorems~\ref{thm:weak}, \ref{thm:mainmu}, \ref{thm:mainkfree}, and~\ref{thm:main_full} below). We also establish upper bounds, both unconditional and assuming RH, on the error terms in the asymptotic formulas for all these summatory functions (see Theorems~\ref{thm:unconditional} and~\ref{thm:ubRH} below).

\subsection{Existing examples of fake~$\mu$'s}\label{sec1.1}

Before introducing our main results formally, we first describe a few subfamilies of fake~$\mu$'s whose summatory functions have been studied extensively.
(We refer the reader to the survey \cite[Chapter VI]{SMC06} for more results related to these fake~$\mu$'s.)
These examples also motivate us to divide fake~$\mu$'s into three categories---see Definition~\ref{defn:3type}.

\begin{ex}[Tanaka's M\"obius functions] \label{ex:tanaka}
Given an integer $k \geq 2$, recall that an integer is called {\em~$k$-free} if it is not divisible by the~$k$th power of any prime (for $k=2$ and $k=3$, these numbers are commonly called {\em squarefree} and {\em cubefree}, respectively).
Tanaka~\cite{T80} defined the generalized M\"obius function $\mu_k$ by declaring that $\mu_k(n) =(-1)^{\Omega(n)}$ if~$n$ is~$k$-free and $\mu_k(n) =0$ otherwise. Note that these functions interpolate between the M\"obius and Liouville functions in the sense that 
$\mu_2= \mu$ and $\lim_{k\to\infty} \mu_k= \lambda$ as a pointwise limit.
\begin{itemize}
\item We see that~$\mu_k$ is the fake~$\mu$ corresponding to the sequence $(\ep_j)$ defined by $\ep_j = (-1)^j$ for $1\le j \le k-1$ and $\ep_j=0$ for $j \geq k$.
\item The corresponding Dirichlet series $\sum_{n=1}^\infty \mu_k(n)n^{-s}$ can be written down explicitly and admits a nice factorization in terms of the Riemann zeta function. For example, when $k\geq 3$ is odd, we have the identity $\sum_{n=1}^\infty \mu_k(n)n^{-s} = \zeta(2s)\zeta(ks)/\zeta(s)\zeta(2ks)$.
\item Let $M_k(x) = \sum_{n \leq x} \mu_k(n)$. Tanaka~\cite{T80} showed that $M_k(x)-\tau_k\sqrt{x}= \Omega_{\pm}(\sqrt{x})$, where $\tau_2=0$, $\tau_k= \zeta(\frac{k}{2}) / \zeta(\frac{1}{2}) \zeta(k)$ if $k\ge3$ is odd, $\tau_k=1 / \zeta(\frac{1}{2}) \zeta(\frac{k}{2})$ if $k\ge4$ is even, and $\tau_{\infty}=1/\zeta(\frac12)$. 
These statements are special cases of the result~\eqref{MMT} that was proved later.
\end{itemize}
\end{ex}

\begin{ex}[Indicator functions of~$k$-free numbers] \label{ex:kfree}
Note that $\mu_k^2(n)$ is the indicator function of~$k$-free numbers (generalizing the fact that $\mu^2(n)$ is the indicator function of squarefree numbers).
\begin{itemize}
\item We see that~$\mu_k^2$ is the fake~$\mu$ corresponding to the sequence defined by $\ep_1= \cdots= \ep_{k-1}=1$ and $\ep_j=0$ for $j \geq k$.
\item The corresponding Dirichlet series also factors nicely in terms of $\zeta(s)$: we have the identity $\sum_{n=1}^\infty \mu_k^2(n)n^{-s} = \zeta(s)/\zeta(ks)$.
\item Let $Q_k(x)$ be the number of~$k$-free numbers up to~$x$, and let $R_k(x) = Q_k(x)-x/\zeta(k)$. Montgomery and Vaughan~\cite{MV81} showed under RH that $R_k(x) \ll_\ep x^{1/(k+1)+\ep}$. (This result has been slightly improved by various authors; for~$k$ sufficiently large, the best known bound is due to Graham and Pintz~\cite{GP89}.) Meng~\cite{M17} gave, under the additional assumption $\sum_{0 <\gamma \leq T}|\zeta'(\rho)|^2 \ll_{\ep} T^{1+\ep}$ for all $\ep>0$, a bound on an integral involving $R_k(x)$ which implies that $R_k(x) \ll x^{1/2k}$ on average. 
\item On the other hand, Evelyn and Linfoot~\cite{EL31} first proved that $R_k(x) = \Omega_{\pm}(x^{1/2k})$. The recent paper~\cite{MOT21} by Mossinghoff, Oliveira e~Silva, and Trudgian provides the best known explicit lower bounds on the oscillations of the error term; see also the survey~\cite{P05} by Pappalardi.
\end{itemize}
\end{ex}

\begin{ex}[Apostol's M\"obius functions]\label{ex:apostol}
Apostol~\cite{A70} described a different generalization of the M\"obius function. For each $k \geq 1$, let $\nu_k$ be the multiplicative function such that for each prime~$p$, we set $\nu_k(p^j) =1$ if $j<k$, $\nu_k(p^j) =-1$ if $j=k$, and $\nu_k(p^j) =0$ if $j>k$. Note that $\nu_1 = \mu$.
\begin{itemize}
\item We see that~$\nu_k$ is the fake~$\mu$ corresponding to the sequence defined by $\ep_1= \cdots= \ep_{k-1}=1$, $\ep_k=-1$, and $\ep_j=0$ for $j \geq k+1$.
\item While the corresponding Dirichlet series does not admit (for $k\ge2$) an exact factorization in terms of $\zeta(s)$, it does possess~\cite[Lemma~2.5]{BFMT23} a useful partial factorization of the form $\zeta(s)/ \zeta(ks)^2 \cdot A_k^*(s)$, where $A_k^*(s)$ is an Euler product that is absolutely convergent (and thus analytic) for $\Re (s) > 1/(k+1)$.
\item For $k \geq 2$, Apostol~\cite{A70} showed that there is a constant $\phi_k$ such that $\sum_{n \leq x} \nu_k(x) = \phi_kx+O(x^{1/k}\log x)$; the factor multiplying $x^{1/k}$ in the error term has recently been improved by Banerjee {\it et~al.}~\cite{BFMT23}. Under RH, Suryanarayana~\cite{S77} improved the error term to $O \bigl( x^{4k/(4k^2+1)} \exp(C\frac{\log x}{\log \log x}) \bigr)$ for some positive constant~$C$.
\end{itemize}
\end{ex}

\begin{ex}[Indicator functions of~$k$-full numbers]\label{ex:kfull}
Given an integer $k\ge2$, recall that an integer is called {\em $k$-full} if every prime that divides it does so with multiplicity at least~$k$ (for $k=2$ these numbers are commonly called {\em powerfull} or {\em squarefull} numbers).
\begin{itemize}
\item We see that the indicator function of~$k$-full numbers is the fake~$\mu$ corresponding to the sequence defined by $\ep_1= \cdots= \ep_{k-1}=0$ and $\ep_j=1$ for $j \geq k$.
\item When $k=2$, the corresponding Dirichlet series is exactly $\zeta(2s)\zeta(3s)/\zeta(6s)$. When $k \geq 3$, the corresponding Dirichlet series admits~\cite[Proposition 1]{BRS88} the useful partial factorization
\[
\biggl( \prod_{j=k}^{2k-1} \zeta(js) \bigg/ \prod_{j=2k+2}^{4k+3} \zeta(js)^{a_j} \biggr) V(s),
\]
where the~$a_j$ are particular integers and where $V(s)$ is an Euler product that is absolutely convergent (and thus analytic) for $\Re (s) > 1/(4k+4)$.
\item Let $N_k(x)$ be the number of~$k$-full numbers up to $x$; then $N_k(x)$ admits an asymptotic formula of the form $N_k(x) = \sum_{k \leq j \leq 2k-1} \faf(j)x^{1/j}+\Delta^{N_k}(x)$. Various upper bounds on $\Delta^{N_k}(x)$ can be found in~\cite{M97, L16} and the references therein; in particular, under the Lindel\"of hypothesis, Ivi\'c~\cite{I78} showed that $\Delta^{N_k}(x) \ll_\ep x^{1/2k+\ep}$.
\item Bateman and Grosswald~\cite{BG58} showed that if $\rho$ is any zero of the Riemann zeta function such that $\zeta(\frac\rho2) \ne 0$ and
$\zeta(\frac\rho3) \ne 0$, then $\Delta^{N_2}(x) = \Omega_{\pm}(x^{\Re(\rho)/6})$. Thus in particular, by taking $\rho = \frac12\pm i\cdot14.1347\ldots$ to be a zero of the zeta function closest to the real axis, their result implies that $\Delta^{N_2}(x) = \Omega_{\pm}(x^{1/12})$. Balasubramanian, Ramachandra, and Subbarao~\cite{BRS88} showed that $\Delta^{N_2}(x) = \Omega(x^{1/10})$ and that $\Delta^{N_k}(x) = \Omega(x^{1/(2k+\sqrt{8k}+3)})$ for $k \geq 3$; more precisely, they showed that $\Delta^{N_k}(x) = \Omega(x^{1/2(k+r)})$ where~$r$ is the smallest positive integer such that $r(r-1)\geq 2k$.
\end{itemize}
\end{ex}

One commonality of the above examples is the idea of factoring out powers of $\zeta(s)$, $\zeta(2s)$, and so on from a Dirichlet series, either resulting in a complete factorization or else leaving a remaining factor with nicer analytic properties (a larger half-plane of absolute convergence, for example). This theme is present in many guises in analytic number theory. For example, if $f(n)$ is a multiplicative function such that~$\kappa$ is the average value of $f(p)$ over primes~$p$, the Selberg--Delange method (see for example~\cite[Chapter~II.5]{T15}) finds an asymptotic formula for the summatory function of $f(n)$ by factoring $\zeta(s)^\kappa$ out of the corresponding Dirichlet series, so that the leftover factor is typically analytic in a neighborhood of $s=1$.

If in fact $f(p) = \kappa$ exactly for all primes~$p$, then the resulting factorization is $\sum_{n=1}^\infty f(n)n^{-s} = \zeta(s)^\kappa U_1(s)$ where $U_1(s)$ is a Dirichlet series whose coefficients are supported on squarefull numbers. If $f(p^2)$ is also independent of~$p$, one can further write $\sum_{n=1}^\infty f(n)n^{-s} = \zeta(s)^\kappa \zeta(2s)^{\kappa'} U_2(s)$ for an appropriate constant~$\kappa'$ and a Dirichlet series $U_2(s)$ whose coefficients are supported on $3$-full numbers, and so on. These {\em partial zeta-factorizations} are already beneficial for analytic methods, and sometimes one can consider analogous {\em total zeta-factorizations} $\sum_{n=1}^\infty f(n)n^{-s} = \prod_{v=1}^{\infty} \zeta(vs)^{a_v}$. For example, Moree~\cite[Sections~2 and~3]{M00} and other authors have used these zeta-factorizations as a means of calculating certain number-theoretic constants to high precision; in another vein, Dahlquist~\cite[Section 2]{D52} recognized the importance of finite zeta-factorizations in the study of the natural boundary of analytic continuation for Dirichlet series (see also the later chapters of~\cite{dSW}). Moreover, when the exponents~$a_v$ are integers, then the resulting functions are meromorphic and we expect Perron's formula and contour integration to yield asymptotic formulas, and explicit formulas involving the zeros of $\zeta(s)$, for our summatory functions.

\smallskip
Returning now to the examination of fake~$\mu$'s, it turns out that partial zeta-factorizations of this type are important not only for the proofs, but even for the statements, of our oscillation results. In Section~\ref{sec:alg}, we will describe an algorithm for computing such zeta-factorizations that is designed specifically for the Dirichlet series of fake~$\mu$'s. We will write the result of such a zeta-factorization in the form
\begin{equation}\label{eq:index}
\sum_{n=1}^\infty f(n)n^{-s} = \prod_{j=1}^{\ell} \zeta(js)^{a_j} \cdot U_\ell(s),    
\end{equation}
where $a_1, \ldots, a_\ell$ are integers and $U_\ell(s)$ is of the form 
\begin{equation}\label{eq:Uell}
U_\ell(s) = \prod_{p} \biggl(1+\sum_{j= \ell+1}^{\infty} \frac{\eta_j}{p^{js}} \biggr)
\end{equation}
for certain constants~$\eta_j$ (so that the coefficients in the Dirichlet series for $U_{\ell}(s)$ are supported on $(\ell+1)$-full numbers).

We now proceed to define the terminology required to state our main results.

\subsection{Main results}

We quickly observe that not all fake~$\mu$'s exhibit oscillations in their partial sums. For instance, if~$f$ is the indicator function of $n=1$ (corresponding to the case $\ep_j\equiv0$), there is no oscillation result. Similarly, if~$f$ is the indicator function of~$k$th powers for some $k \geq 1$ (that is, if $\ep_{j}=1$ when $k \mid j$ and $\ep_j=0$ otherwise), then there is no oscillation result either beyond the trivial $\sum_{n \leq x} f(n) - x^{1/k} = \lfloor x^{1/k}\rfloor - x^{1/k} = \Omega_-(1)$. For this reason, we call the fake~$\mu$'s mentioned above \emph{trivial fake~$\mu$'s}. These observations lead to the following definition.

\begin{defn}
Let $\cF$ be the set of arithmetic functions consisting of all fake~$\mu$'s that are not trivial. In other words, $f\in\cF$ precisely when $f(n)$ is a multiplicative function such that:
\begin{enumerate}
\item there exists a $\{-1,0,1\}$-valued sequence $(\ep_j)_{j=1}^{\infty}$ such that $f(p^j) = \ep_j$ for every prime~$p$;
\item $f(n)$ is neither the indicator function of~$\{1\}$, nor the indicator function of the set of~$k$th powers for any $k\ge1$.
\end{enumerate}
For any $f\in\cF$, define $F_{f}(x) = \sum_{n \leq x} f(n)$ to be the summatory function of~$f$, and define $D_{f}(s) = \sum_{n=1}^{\infty} f(n)n^{-s}$ to be the Dirichlet series associated with~$f$.
\end{defn}

Our goal is to deduce an oscillation result for $F_{f}(x)$ based on analytic properties of $D_{f}(s)$, stated with the help of the indices introduced in the following definition.

\begin{defn}\label{defn:indices}
If~$f\in\cF$ is defined via the sequence $(\ep_j)$, we define the {\em initial index} of~$f$ to be the smallest number~$j$ such that $\ep_j\ne0$. We define the {\em critical index} of~$f$ to be the smallest number~$j$ for which a power of $\zeta(js)$ appears in the denominator of the zeta-factorization of $D_{f}(s)$. More precisely, if for $\sigma>1$ we can write $D_f(s)$ in the form given in equations~\eqref{eq:index} and~\eqref{eq:Uell}, then the critical index of~$f$ equals~$\ell$ precisely when $a_1, a_2, \ldots, a_{\ell-1}\ge0$ and $a_\ell<0$. 
\end{defn}

Given a zeta-factorization~\eqref{eq:index}, we expect (when $U_\ell$ is nicely behaved) that the right-hand side will have real poles at $s= \frac1j$ whenever $a_j>0$ (so that $\zeta(js)$ appears to some power in the numerator); we further expect that it will have complex poles with real parts equal to~$\frac1{2j}$ whenever $a_j<0$ (so that $\zeta(js)$ appears to some power in the denominator). When using Perron's formula and contour integration, the real poles are associated to the main term of the asymptotic formula, and the complex poles are associated to oscillatory terms. Consequently, we expect a main term for $F_f(x)$ of the form
\begin{equation}\label{eq:G}
G_{f}(x) = \sum_{j=1}^{2\ell} \Res\biggl(D_f(s) \frac{x^s}{s}, \frac{1}{j} \biggr),
\end{equation}
where $\Res(g(s),s_0)$ denotes the residue of $g(s)$ at $s=s_0$.
We will study oscillation results and upper bounds for the error term
\begin{equation}\label{eq:E}
E_{f}(x) =F_{f}(x)-G_{f}(x).
\end{equation}
Note that~$\ell$ here denotes the critical index of~$f$, and that the sum defining~$G_f$ has been deliberately taken up to exactly $2\ell$ for the following reason. As a function of~$x$, the residue at $s= \frac1j$ in equation~\eqref{eq:G} will have order of magnitude $x^{1/j}$, while the residues at the complex poles with real part equal to~$\frac1{2\ell}$ will oscillate with order of magnitude $x^{1/2\ell}$. Therefore the rightmost $2\ell$ potential real poles should be taken into account in the main term, but we expect that all subsequent real poles will give a negligible contribution compared to the oscillations of the error term.

\smallskip

We are now able to state the most general form of our main oscillation result, which holds for every nontrivial fake~$\mu$:

\begin{thm}\label{thm:weak}
Let $f \in \cF$. If~$\ell$ is the critical index of~$f$, then $E_f(x) = \Omega_{\pm} (x^{{1}/{2\ell}})$. In other words,
\[
F_f(x) = G_f(x) + E_f(x) = \sum_{j=1}^{2\ell} \Res\biggl(D_f(s) \frac{x^s}{s}, \frac{1}{j} \biggr) + \Omega_{\pm} (x^{{1}/{2\ell}}).
\]
\end{thm}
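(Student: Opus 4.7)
The plan is to combine the zeta-factorization of $D_f$ with a Landau-type positivity argument, exploiting the complex poles that the factor $\zeta(\ell s)^{a_\ell}$ (with $a_\ell < 0$ by the definition of the critical index) contributes to $D_f(s)$ at $s = \rho/\ell$ for each nontrivial zero $\rho$ of the Riemann zeta function.

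First I would translate the statement into a question about the Mellin transform
\[
\tilde E_f(s) := \int_1^\infty E_f(x)\, x^{-s-1}\,\ud x.
\]
For $\Re(s)$ sufficiently large, Abel summation gives $\tilde E_f(s) = D_f(s)/s - H_f(s)$, where $H_f(s)$ collects the principal parts of $D_f(s)/s$ at the real points $s = 1/j$ for $j = 1, \ldots, 2\ell$; these are precisely the Mellin transforms of the summands making up $G_f(x)$. Combining \eqref{eq:index}--\eqref{eq:Uell} with the meromorphic continuation of the zeta-factors, $\tilde E_f(s)$ is meromorphic across the line $\Re(s) = 1/(2\ell)$, holomorphic at every real point $s = 1/j$ with $j \leq 2\ell$ by construction, and inheriting complex poles at $s = \rho/\ell$ from every nontrivial zero $\rho$ of $\zeta(s)$.

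The central technical ingredient is a non-cancellation lemma: there exists a simple nontrivial zero $\rho_0$ of $\zeta$ with $\Re(\rho_0) = 1/2$ at which the leading Laurent coefficient of $\tilde E_f$ at $s = \rho_0/\ell$ is nonzero. From the factorization, this coefficient is an explicit nonzero constant times the product of $1/\zeta'(\rho_0)^{|a_\ell|}$, the values $\zeta(j\rho_0/\ell)^{a_j}$ for $j < \ell$, and $U_\ell(\rho_0/\ell)$. Each of these factors has only isolated zeros or poles in any bounded vertical strip; meanwhile Hardy's theorem and its quantitative refinements (Selberg, Levinson) provide $\gg T\log T$ simple critical-line zeros $\rho$ with $|\Im\rho| \leq T$, so a pigeonhole argument in bounded strips yields infinitely many $\rho_0$ avoiding every cancellation condition.

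Finally, suppose for contradiction that $\limsup_{x\to\infty} E_f(x)/x^{1/(2\ell)} \leq 0$, so that for every $c>0$ the function $h(x) := c\, x^{1/(2\ell)} - E_f(x)$ is eventually non-negative. Its Mellin transform $\tilde h(s) = c/(s-1/(2\ell)) - \tilde E_f(s)$ then has a simple pole at $s = 1/(2\ell)$ with residue $c$ and a pole of order $|a_\ell|$ at $s = \rho_0/\ell$ with nonzero leading coefficient $A$. Eventual non-negativity of $h$ yields the standard bound $|\tilde h(\sigma + it)| \leq \tilde h(\sigma) + O(1)$ for $\sigma > 1/(2\ell)$; comparing Laurent expansions as $\sigma \to 1/(2\ell)^+$ forces either an immediate contradiction (when $|a_\ell| \geq 2$, since a pole of order at least two cannot be dominated by a simple one) or the inequality $|A| \leq c$ (when $|a_\ell| = 1$), which on letting $c \to 0$ forces $A = 0$. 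Applying the symmetric argument to $-E_f$ yields the matching $\liminf_{x\to\infty} E_f(x)/x^{1/(2\ell)} < 0$, completing the $\Omega_\pm$ conclusion. The main obstacle I anticipate is the non-cancellation lemma, since it requires controlling the zero sets of $U_\ell$ and of the auxiliary $\zeta(js)$ factors at the shifted arguments $j\rho_0/\ell$; a secondary technical point is that for $\ell \geq 2$ the line $\Re(s) = 1/(2\ell)$ lies strictly to the left of the abscissa of absolute convergence $1/(\ell+1)$ of $U_\ell$, so the argument will rely on the further meromorphic continuation of $U_\ell$ furnished by the zeta-factorization algorithm of Section~\ref{sec:alg}.
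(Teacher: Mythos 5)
Your overall strategy---express $E_f$ via a Mellin transform $\tilde E_f(s)$, isolate the complex poles at $s=\rho/\ell$ supplied by the factor $\zeta(\ell s)^{a_\ell}$, and run a Landau positivity argument against a test function $c\,x^{1/2\ell}$---is the same skeleton as the paper's proof (Theorem~\ref{thm:general}). The genuine divergence is in the non-cancellation lemma (the paper's Proposition~\ref{prop:nonzero}), in two respects. First, you propose to find a \emph{simple zero on the critical line} and invoke Hardy--Selberg--Levinson-type positive-proportion results; the paper never needs these. It finds a zero $\rho$ with only $\Re(\rho)\geq\tfrac12$ (essentially free, by the functional equation) and lets the Landau argument itself do the work: once $H(s)$ is forced to be analytic for $\sigma>\tfrac1{2\ell}$, the pole at $\rho/\ell$ can only survive if $\Re(\rho)=\tfrac12$, and the order comparison in the Laurent expansion then forces simplicity. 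That removes a heavy external input. Second, your mechanism for avoiding the cancellation conditions is a pigeonhole: you assert that $U_\ell$ and the auxiliary $\zeta(js/\ell)$ factors have ``isolated zeros in bounded strips'' and hence only $o(T\log T)$ problematic points. That can be made rigorous (zero-density gives $O(T)$ for the $\zeta(j\cdot/\ell)$ factors with $j\neq\ell$, and one can count zeros of $U_{2\ell}$ via Jensen's formula or the paper's Lemma~\ref{lem:deduction}, which reduces vanishing of $U_{2\ell}$ to vanishing of finitely many Euler factors), but you leave it as an obstacle rather than closing it. The paper instead proves a quantitative first-moment estimate: it sums $\prod_{p\le P}C_p(\rho/\ell)C_p((1-\rho)/\ell)$ over zeros up to height $T$, using the Landau--Gonek formula (Lemma~\ref{lem:Landau}) to kill the off-diagonal, and shows the sum is $\geq\tfrac12 T\log T+O_\ell(T)$; this directly produces a zero at which nothing cancels, with no appeal to simple-zero density. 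You also correctly flag the subtlety that for $\ell\geq2$ the line $\sigma=1/2\ell$ lies to the left of the abscissa of convergence of $U_\ell$; the fix (replacing $U_\ell$ by $U_{2\ell}$, convergent for $\sigma>1/(2\ell+1)$) is exactly what the paper does via Theorem~\ref{thm:algorithm}.
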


\noindent
(In this theorem and throughout this paper, all implicit constants in~$\Omega$ and~$O$-notation may depend upon~$f$.)

Given more information about the specific $f\in\cF$, of course, we should be able to be more specific about this main term and oscillation term. We would like to determine when the residues on the right-hand side equal~$0$ (as many of them will) and to write the nonzero residues more explicitly; we would like to increase the size of the oscillation term when possible (even if just by a logarithmic factor); and we would like to more explicitly determine what the critical index~$\ell$ actually is.
As it happens, we can already be quite a bit more specific simply by dividing the set of nontrivial fake~$\mu$'s into three types.

\begin{defn}\label{defn:3type}
Let $f\in\cF$ be a nontrivial fake~$\mu$.
\begin{enumerate}
\item We say that~$f$ is of {\em M\"obius-type} if the initial index~$k\ge1$ of~$f$ has the property that $\ep_k=-1$. The M\"obius function~$\mu$ and the Liouville function~$\lambda$ are certainly of M\"obius-type, as are Tanaka's M\"obius functions~$\mu_k$ from Example~\ref{ex:tanaka}. As we will see in Theorem~\ref{thm:mainmu}, the critical index also equals~$k$ in this case.
\item We say that~$f$ is of {\em powerfree-type} if $\ep_1=1$ (so that the initial index of~$f$ is~$1$). When $k\ge2$, the indicator functions~$\mu_k^2$ of~$k$-free numbers (see Example~\ref{ex:kfree}) are certainly of powerfree-type, as are Apostol's M\"obius functions~$\nu_k$ from Example~\ref{ex:apostol}. In this case, it is important to consider the smallest positive number~$k$ with $\ep_k \neq 1$, which is in some sense a ``measure of powerfreeness" (since this yields the correct value of~$k$ when~$f= \mu_k^2$, and also when $f= \nu_k$). As we will see in Theorem~\ref{thm:mainkfree}, the critical index equals this value of~$k$ in this case.
\item We say that~$f$ is of {\em powerfull-type} if the initial index of~$f$ is $k \geq 2$ and $\ep_k=1$.
When $k\ge2$, the indicator functions of~$k$-full numbers (see Example~\ref{ex:kfull}) are certainly of powerfull-type.
The initial index~$k$ is in some sense a ``measure of powerfullness" of~$f$.
Unlike the previous two cases, there is no simple formula for the critical index, although in Section~\ref{sec:alg} we give an algorithm for computing the critical index from the defining sequence $(\ep_j)$.
\end{enumerate}
\end{defn}

We will be able to be more concrete about the main terms for our summatory functions of fake~$\mu$'s with the following notation.

\begin{defn}\label{defn:doublepole}
For any $f\in\cF$ and any positive integer~$j$, define
\[
\faf(j) =j\Res(D_f(s),\tfrac{1}{j})
\quad\text{and}\quad
\fbf(j) =j^2\Res((s-\tfrac{1}{j})D_f(s),\tfrac{1}{j}).
\]
If $D_f(s)$ has at most a double pole at $s= \frac1j$, then the principal part of $D_f(s)$ is
\[
\frac{\fbf(j)}{j^2(s-1/j)^2} + \frac{\faf(j)}{j(s-1/j)},
\]
so that subtracting this expression from $D_f(s)$ results in a function that is analytic at $s= \frac1j$. Note that either or both of $\faf(j)$ and $\fbf(j)$ might equal~$0$.
\end{defn}

We may now state the following refinement of Theorem~\ref{thm:weak} for M\"obius-type fake~$\mu$'s.

\begin{thm}\label{thm:mainmu}
Let $f \in \cF$ be of M\"obius-type with initial index~$k$. Then the critical index of~$f$ also equals~$k$. Moreover, 
\[
G_f(x) = \sum_{\substack{k+1\leq j \leq 2k\\ \ep_j=1}} \faf(j)x^{1/j}
\quad\text{and}\quad
E_f(x) = \Omega_{\pm}(x^{{1}/{2k}}).
\]
\end{thm}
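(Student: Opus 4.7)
My plan is to produce an explicit zeta-factorization of $D_f(s)$ out to index $2k$, read off the critical index and real residues from it, and then invoke Theorem~\ref{thm:weak} for the oscillation. Since $\ep_1 = \cdots = \ep_{k-1} = 0$ and $\ep_k = -1$, each Euler factor of $D_f(s)$ has the form $1 - p^{-ks} + \sum_{j>k} \ep_j p^{-js}$, and multiplication by $\zeta(ks) = \prod_p(1-p^{-ks})^{-1}$ cancels the $p^{-ks}$ term. This yields $D_f(s) = \zeta(ks)^{-1} U_k(s)$ with $U_k$ of the form~\eqref{eq:Uell}. Here $a_1 = \cdots = a_{k-1} = 0$ and $a_k = -1$, so by Definition~\ref{defn:indices} the critical index equals~$k$.

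To locate the poles of $D_f(s)$ in the strip $1/(2k) \le \Re(s) < 1/k$, I iteratively multiply by $\zeta(js)^{-\ep_j}$ for $j = k+1, k+2, \ldots, 2k$. Expanding the cumulative multiplier $\prod_{j=k+1}^{m-1}(1-p^{-js})^{\ep_j}$ produces no cross terms below order $p^{-2(k+1)s}$, since any two linear contributions already sum to at least $2(k+1) = 2k+2$; such cross terms therefore cannot affect orders up to $p^{-2ks}$. A short induction on $m$ then shows that the coefficient of $p^{-ms}$ to be cancelled at step $m$ is exactly $\ep_m$, and the process terminates with
\[
D_f(s) = \zeta(ks)^{-1} \prod_{j=k+1}^{2k} \zeta(js)^{\ep_j} \cdot V(s),
\]
where $V(s)$ is an Euler product with local factors of the form $1 + \sum_{j \ge 2k+1} \eta'_j p^{-js}$, hence analytic on $\Re(s) > 1/(2k+1)$.

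Reading off the residues is now straightforward. For $1 \le j \le k-1$, every $\zeta$-factor appearing in the product, evaluated at $s = 1/j$, takes the form $\zeta(i/j)$ with $i/j > 1$, which is analytic and nonzero; thus $D_f$ is analytic at $s=1/j$. For $j = k$, the pole of $\zeta(ks)$ at $s = 1/k$ forces $\zeta(ks)^{-1}$ to vanish there, so $D_f(1/k)=0$. In both cases the residue of $D_f(s)x^s/s$ at $s = 1/j$ is zero. For $k+1 \le j \le 2k$, the only non-analytic factor at $s = 1/j$ is $\zeta(js)^{\ep_j}$, which produces a simple pole precisely when $\ep_j = 1$ (the other factors being analytic and nonzero there, using that $\zeta$ has no real zeros in $(0,1)$). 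At such a pole,
\[
\Res\biggl(D_f(s)\frac{x^s}{s}, \frac{1}{j}\biggr) = j\,\Res\bigl(D_f(s), \tfrac{1}{j}\bigr)\, x^{1/j} = \faf(j)\,x^{1/j},
\]
and summing over the surviving $j$ yields the asserted form of $G_f(x)$. With the critical index confirmed to be $k$, Theorem~\ref{thm:weak} then delivers $E_f(x) = \Omega_\pm(x^{1/2k})$. The main technical obstacle is the inductive bookkeeping in the second paragraph: verifying that cross terms from earlier multiplications never reach back into the active range of indices is precisely what allows the factorization to be pushed cleanly out to index $2k$ instead of stalling earlier.
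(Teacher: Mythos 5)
Your proof follows essentially the same route as the paper: you rederive inline the factorization $D_f(s) = \zeta(ks)^{-1}\prod_{j=k+1}^{2k}\zeta(js)^{\ep_j}V(s)$, which is exactly the content of Proposition~\ref{prop:typeI} (built there from iterated applications of Lemma~\ref{lem:factor}), and your cross-term observation --- that any two contributions with exponents $\ge k+1$ already exceed $2k$ --- is precisely the mechanism that makes that iteration preserve the coefficients $\ep_j$ in the range $k+1\le j\le 2k$. You then read off the simple real poles to identify $G_f$ and appeal to a general oscillation result.

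One logical repair is needed. Citing Theorem~\ref{thm:weak} for the $\Omega_\pm$ conclusion is circular within this paper: Theorem~\ref{thm:weak} is itself \emph{deduced} from Theorem~\ref{thm:mainmu} together with Theorems~\ref{thm:mainkfree} and~\ref{thm:main_full}. The correct citation is Theorem~\ref{thm:general}, which gives $E_f(x)=\Omega_\pm\bigl(x^{1/2\ell}(\log x)^{|a_\ell|-1}\bigr)$; since here $\ell=k$ and $a_k=\ep_k=-1$, the logarithmic power vanishes and you recover exactly $\Omega_\pm(x^{1/2k})$. A further minor point: your parenthetical assertion that the ``other factors,'' including $V$, are nonzero at $s=1/j$ is not established (a single Euler factor of $V(s)$ could in principle vanish at a real point of $(1/(2k+1),1/k]$), but it is also not needed --- the stated formula for $G_f(x)$ tolerates $\faf(j)=0$, and all the argument requires is that $D_f$ has \emph{at most} a simple pole at each $s=1/j$, which your factorization does give. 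Similarly, the claim that $V$ is analytic for $\Re(s)>1/(2k+1)$ requires a polynomial bound on its Euler coefficients (supplied in the paper by Lemma~\ref{lem:factor} and Lemma~\ref{really does converge lemma}); you assert it without proof, though it is straightforward given your construction.
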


\begin{rem}
A particular case of this theorem is when $f \in \cF$ has $\ep_1=-1$ and $\ep_2=1$, in which case~$f$ is of M\"obius-type with initial index~$k=1$. We thus see that Theorem~\ref{thm:mainmu} generalizes~\cite[Theorem~3]{MMT23} as stated in equation~\eqref{MMT}, which itself includes the M\"obius function~$\mu$, the Liouville function~$\lambda$, and Tanaka's M\"obius functions~$\mu_k$ from Example~\ref{ex:tanaka} as special cases.
\end{rem}

\begin{rem} \label{happy modbius}
When we provide upper bounds for $E_f(x)$ in Theorem~\ref{thm:ubRH}, we will see
that assuming RH, this oscillation result $\Omega_{\pm}(x^{{1}/{2k}})$ for fake~$\mu$'s of M\"obius type is best possible up to factors of~$x^\ep$.
\end{rem}

\begin{rem} \label{tempting heuristic}
A tempting heuristic suggests that Theorem~\ref{thm:weak} and its refinements might always yield best-possible oscillation results: One can write $E_f(x) = F_f(x) - G_f(x)$ as a contour integral involving a meromorphic function whose rightmost singularities are the poles coming from the negative power of $\zeta(\ell s)$ in equation~\eqref{eq:index} (where~$\ell$ is the critical index of~$f$), since $G_f(x)$ is designed to cancel all the real poles of $D_f(s)$ with $\Re(s)>\frac1{2\ell}$. Assuming RH, these rightmost singularities are all on the line $\Re(s) = \frac1{2\ell}$, and contour integration might plausibly result in an explicit formula whose dominant terms have order of magnitude $x^{1/2\ell}$. However, estimating the contribution to $F_f(x)$ from the shifted contour is not straightforward, and indeed we know that this heuristic can fail in general---a counterexample is given by the indicator function of~$k$-full numbers (see Example~\ref{ex:kfullfactored} and Remark~\ref{interesting consequence} below for more details).
\end{rem}

We continue by stating the following refinement of Theorem~\ref{thm:weak} for powerfree-type fake~$\mu$'s.

\begin{thm}\label{thm:mainkfree}
Let $f \in \cF$ be of powerfree-type, and let~$k$ be the smallest positive integer such that $\ep_k \neq 1$. Then the critical index of~$f$ equals~$k$, and
\begin{multline*}
G_f(x) = \faf(1)x+\sum_{\substack{k+1\leq j \leq 2k-1\\ \ep_j>\ep_{j-1}}} \faf(j)x^{1/j}+
\sum_{\substack{k+1\leq j \leq 2k-1\\ \ep_j=1,\,\ep_{j-1}=-1}} \fbf(j) x^{1/j} \bigl( \tfrac{1}{j}\log x-1\bigr) \\
+ \begin{cases}
0, & \text{if }\ep_{2k}-\ep_{2k-1}+\ep_k\leq 0,\\
\faf(2k)x^{1/2k}, &\text{if } \ep_{2k}-\ep_{2k-1}+\ep_k=1,\\
\faf(2k)x^{1/2k}+\fbf(2k)x^{1/2k} \bigl( \frac{1}{2k}\log x-1\bigr), &\text{if } \ep_{2k}-\ep_{2k-1}+\ep_k=2.
\end{cases}
\end{multline*}
Moreover, $E_f(x) = \Omega_{\pm} (x^{1/2k}(\log x)^{|\ep_k|})$.
\end{thm}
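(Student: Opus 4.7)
My approach divides into an algebraic stage that determines the zeta-factorization and main term, and an analytic stage that extracts oscillations from zeros of $\zeta(s)$.

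For the algebraic stage, apply the algorithm of Section~\ref{sec:alg} to the Euler expansion $D_f(s) = \prod_p (1 + \sum_{j \ge 1} \ep_j p^{-js})$. The hypothesis $\ep_1 = \cdots = \ep_{k-1} = 1$ with $\ep_k \ne 1$ forces $a_1 = 1$, $a_2 = \cdots = a_{k-1} = 0$, and $a_k = \ep_k - 1 \in \{-1, -2\}$, which is the first negative exponent; hence the critical index is~$k$. The M\"obius-inversion recursion $N c_N = \sum_{d \mid N} d\,a_d$, where $c_N$ is the coefficient of $p^{-Ns}$ in $\log D_f(s)$ (uniformly in~$p$), then yields, for $k+1 \le j \le 2k-1$, the value $a_j = \ep_j - \ep_{j-1}$ (since $j$ has no divisor in $[k, j)$), and for $j = 2k$ the value $a_{2k} = \ep_{2k} - \ep_{2k-1} + \ep_k$ (the divisor~$k$ now contributes the extra~$\ep_k$). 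Substituting these into~\eqref{eq:G} and using Definition~\ref{defn:doublepole} to expand residues---a simple pole ($a_j = 1$) gives $\faf(j) x^{1/j}$, a double pole ($a_j = 2$) additionally gives $\fbf(j) x^{1/j}(\tfrac{1}{j}\log x - 1)$, and a zero or regular point contributes nothing---yields exactly the stated formula for $G_f(x)$, with the trichotomy at $j = 2k$ corresponding to $a_{2k} \le 0$, $a_{2k} = 1$, and $a_{2k} = 2$.

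For the analytic stage, write $D_f(s) = \zeta(s)\zeta(ks)^{a_k} W(s)$ with $W(s)$ holomorphic and nonvanishing in a neighborhood of each point $\rho/k$, where $\rho$ ranges over nontrivial zeros of $\zeta$ on the critical line $\Re(\rho) = \tfrac12$. Since $a_k \in \{-1, -2\}$, each such $\rho$ produces a pole of $D_f(s)$ of order $|a_k| = |\ep_k| + 1$ at $s = \rho/k$, which has real part exactly $\tfrac{1}{2k}$. To obtain $E_f(x) = \Omega_\pm(x^{1/(2k)}(\log x)^{|\ep_k|})$, apply Landau's theorem to the Mellin transforms of $\pm E_f(x) + c\,x^{1/(2k)}(\log x)^{|\ep_k|}$ for small $c > 0$: if either were eventually non-negative, then its abscissa of convergence would have to be a real singularity, but the only candidate real singularity at $s = \tfrac{1}{2k}$ has pole-order $|\ep_k| + 1$, matching the order of the complex singularities at $\rho/k$ on the same vertical line. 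Combining this matching of orders with Hardy's theorem that $\zeta$ has infinitely many zeros on the critical line yields the desired contradiction, along the lines used by Evelyn--Linfoot for $k$-free numbers and Apostol for $\nu_k$.

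The principal obstacle lies in the Landau-type argument when $\ep_k = -1$: the double poles of $D_f(s)$ on $\Re(s) = \tfrac{1}{2k}$ force us to subtract the logarithmic comparison $x^{1/(2k)} \log x$ rather than merely $x^{1/(2k)}$, and to track the resulting higher-order principal parts of the Mellin transform carefully. Verifying that $W(s)$ does not vanish at the complex singularities of interest, and that no unexpected real singularities of the Mellin transform intervene to the right of $\tfrac{1}{2k}$, are the delicate points of the argument.
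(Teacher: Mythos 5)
Your proposal follows the same two-stage outline as the paper: an algebraic stage that produces the zeta-factorization of $D_f(s)$ and hence the main term $G_f(x)$, followed by a Landau-theorem argument at complex singularities on $\Re(s)=\tfrac1{2k}$ to extract the oscillations. The algebraic stage is essentially fine. Your M\"obius-inversion recursion on the log-coefficients is an alternative to the paper's Algorithm~\ref{alg} (which works by directly dividing off $\zeta(js)^{a_j}$ factors), and it arrives at the same exponents $a_j$; combined with Definition~\ref{defn:doublepole} and the residue computation (which matches the paper's proof exactly, noting $\ep_k=-|\ep_k|$ since $\ep_k\in\{0,-1\}$), this gives the stated formula for $G_f(x)$, though you do not actually verify the identity $jc_j - 1 = j(\ep_j-\ep_{j-1})$ for $k+1\le j\le 2k-1$ that your recursion requires.

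The analytic stage, however, has a genuine gap precisely where you flag it. You write $D_f(s)=\zeta(s)\zeta(ks)^{a_k}W(s)$ with ``$W(s)$ holomorphic and nonvanishing in a neighborhood of each point $\rho/k$,'' but this is not true in general: $W(s)$ contains the factors $\prod_{k+1\le j\le 2k-1}\zeta(js)^{\ep_j-\ep_{j-1}}$, $\zeta(2ks)^{a_{2k}}$, and $U_{2k}(s)$, any of which can vanish at $s=\rho/k$ (or, when an exponent $\ep_j-\ep_{j-1}$ is negative, can even produce a compensating pole there). Example~\ref{rem:gap} of the paper shows that already in the base case $k=1$, verifying $U_2(\rho_1)\ne 0$ requires a nontrivial Euler-factor computation, and the original reference~\cite{MMT23} had a gap on exactly this point. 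What the paper actually proves (Proposition~\ref{prop:nonzero}) is only that there \emph{exists} one zero $\rho$ with $\Re(\rho)\ge\tfrac12$ at which $U_{2\ell}(\rho/\ell)$ and all the $\zeta(j\rho/\ell)$, $j\ne\ell$, are nonzero; this is obtained via a zero-density estimate (Lemma~\ref{lem:Zl}), the Landau--Gonek formula for $\sum_{0<\gamma\le T}x^\rho$ (Lemma~\ref{lem:Landau}), and a reduction of the nonvanishing of $U_{2\ell}$ to nonvanishing of finitely many Euler factors (Lemma~\ref{lem:deduction}), all assembled into an averaged nonvanishing argument. Invoking Hardy's theorem is insufficient, because merely having infinitely many critical-line zeros does not guarantee that the remaining factors cooperate at any of them. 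Since you explicitly defer this as ``the delicate point'' without supplying the argument, the $\Omega_\pm$ conclusion is not established; you would need to import or reprove Proposition~\ref{prop:nonzero} (and then follow the Landau-theorem contradiction as in Theorem~\ref{thm:general}), which is the technical heart of the paper's Section~\ref{sec:omgea}.
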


\begin{rem}
Since the indicator function of~$k$-free numbers is a powerfree-type fake~$\mu$, we see that Theorem~\ref{thm:mainkfree} recovers the result of Evelyn and Linfoot~\cite{EL31} mentioned in Example~\ref{ex:kfree} on oscillations of the error term in the counting function for~$k$-free numbers. 
Theorem~\ref{thm:mainkfree} also provides the first oscillation result for
Apostol's M\"obius functions~$\nu_k$ from Example~\ref{ex:apostol}.
\end{rem}

For the third category of fake~$\mu$'s, namely those of powerfull-type, a more precise statement is much more complicated, in large part because even computing the critical index itself is not straightforward. In Theorem~\ref{thm:main_full} we will give a detailed version of our oscillation result for powerfull-type fake~$\mu$'s.

\smallskip

We complement the oscillation results described above with upper bounds on the error terms $E_f(x)$. Since we are partly motivated by trying to understand how strong those oscillation results are, we provide two such results: the first one is unconditional, and the second one assumes RH.

\begin{thm}\label{thm:unconditional}
Let $f\in \cF$. Unconditionally, we have the following upper bounds on $E_f(x)$:
\begin{enumerate}
    \item If~$f$ is of powerfull-type with initial index~$k$, then $E_f(x)\ll_\ep x^{1/(k+1)+\ep}$ for each $\ep>0$.
    \item If~$f$ is of M\"obius-type or powerfree-type with critical index~$k$, then 
    \begin{equation}\label{eq:boundpowerfree}
     E_f(x)\ll x^{1/k}\exp\biggl(-c\frac{(\log x)^{3/5}}{(\log \log x)^{1/5}} \biggr)
    \end{equation}
where~$c$ is some absolute positive constant.
\end{enumerate}
\end{thm}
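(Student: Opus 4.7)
The plan is to use the truncated Perron formula in both parts, with different contour shifts tailored to the type of zeta-factorization. As a preliminary, the algorithm to be introduced in Section~\ref{sec:alg} extends the zeta-factorization to any desired length:
\[
D_f(s) = \prod_{j=1}^{L} \zeta(js)^{a_j} \cdot V_L(s),
\]
with $V_L(s)$ absolutely convergent (and hence analytic) in the half-plane $\Re s > 1/(L+1)$; take $L$ large enough that $V_L$ is analytic wherever the contour will be shifted.

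For part~(a), since $f$ is of powerfull-type with initial index~$k$, the factorization has $a_j = 0$ for $j < k$, so the only real pole of $D_f(s)$ with $\Re s > 1/(k+1)$ is at $s = 1/k$. Apply Perron from $\Re s = 1 + 1/\log x$ and shift the contour to $\Re s = 1/(k+1) + \ep$, picking up only this one residue. On the shifted line, $V_L(s) = O(1)$ and convexity estimates $\zeta(\sigma + it) \ll (1+|t|)^{\max(0,(1-\sigma)/2) + \ep}$ applied to each $\zeta(js)^{a_j}$ give $|D_f(s)| \ll (1+|t|)^{O(1)}$, so the shifted integral contributes $O(x^{1/(k+1)+\ep})$ after optimizing the truncation height~$T$. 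Comparing with $G_f(x)$, which additionally collects residues at $s = 1/j$ for $k+1 \leq j \leq 2\ell$---each of size $O(x^{1/j} (\log x)^{O(1)}) = O(x^{1/(k+1)+\ep})$---yields the stated bound.

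For part~(b), where $k$ is the critical index and $a_k < 0$, the idea is to shift the Perron contour into the classical Vinogradov--Korobov zero-free region for $\zeta(ks)$, namely to
\[
\Re s = \frac{1}{k} - \frac{c_0}{k (\log(k|t|+3))^{2/3} (\log\log(k|t|+3))^{1/3}}.
\]
In this region, $D_f(s)$ is analytic apart from real poles at $s = 1/j$ for $j \leq k-1$ arising from any factors with $a_j > 0$; the corresponding residues reproduce the dominant terms of $G_f(x)$, while the residues in $G_f(x)$ at points $s = 1/j$ for $k+1 \leq j \leq 2k$---lying to the left of the shifted contour and therefore not picked up---are each $O(x^{1/(k+1)}(\log x)^{O(1)})$ and are absorbed into the target bound. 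On the contour itself, the Vinogradov--Korobov bound $|\zeta(ks)|^{-1} \ll (\log(k|t|))^{2/3 + \ep}$ combined with convexity for the remaining $\zeta(js)^{a_j}$ with $j < k$ and the boundedness of $V_L(s)$ controls the integrand by $x^{1/k} \exp(-c \log x / u)$ up to polynomial factors, where $u = (\log(k|t|))^{2/3} (\log\log(k|t|))^{1/3}$; the standard balancing of $\log x / u$ against $u$, together with optimal choice of~$T$, produces the final bound $x^{1/k} \exp(-c (\log x)^{3/5} / (\log\log x)^{1/5})$.

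The main obstacle is the effective Perron/contour-shift argument itself: producing uniform control of $D_f(s)$ on the shifted vertical contour and the horizontal connecting segments, and choosing~$T$ so that all auxiliary contributions (horizontal integrals, Perron truncation error) are absorbed in the target bound. A secondary subtlety is that, for each contour shift, one must extend the partial zeta-factorization to sufficient length~$L$ via the Section~\ref{sec:alg} algorithm so that $V_L(s)$ remains analytic throughout the region bounded by the contours; this is routine but must be tracked carefully to avoid hidden singularities of $V_L$ spoiling the estimates.
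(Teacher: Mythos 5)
Your proposal takes a fundamentally different route from the paper---direct Perron plus contour shifting---whereas the paper's Section~\ref{sec:conv} uses the convolution (Dirichlet hyperbola) method via the transfer lemma Lemma~\ref{lem:conv}, reducing everything to known estimates on $M(x)$, $Q_k(x)$, and the auxiliary function $H_k(x)$. Unfortunately the direct Perron approach does not deliver the stated bounds. For part~(a): on the line $\Re s = \frac{1}{k+1}+\ep$ the factor $\zeta(ks)$ sits at $\Re(ks)=\frac{k}{k+1}+k\ep<1$, and the pointwise convexity bound gives only $\zeta(ks)\ll(1+|t|)^{1/(2(k+1))+\ep}$. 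The vertical integral is then $\ll x^{1/(k+1)+\ep}T^{1/(2(k+1))+\ep}$, and balancing against the Perron truncation error $x\log x/T$ forces $T\approx x^{2k/(2k+3)}$, yielding only $E_f(x)\ll x^{3/(2k+3)+\ep}$---strictly weaker than $x^{1/(k+1)+\ep}$ (for $k=2$ this is $x^{3/7}$ vs.\ $x^{1/3}$). To salvage a Perron argument you would need a mean-value bound such as $\int_1^T\lvert\zeta(k\sigma+it)\rvert^2\,dt\ll T\log T$ in place of pointwise convexity; the paper's Lemma~\ref{lem:conv}(a) avoids the issue entirely, since the partial sums of the coefficients of $Y(s)=\zeta(ks)$ have error term $O(1)$.

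For part~(b) the gap is structural and, in my judgment, not repairable within a direct-Perron framework when $k\ge2$. The truncated Perron error is $\gg x/T$, so to match the target $x^{1/k}\exp(-c(\log x)^{3/5}(\log\log x)^{-1/5})$ you are forced to take $T\ge x^{1-1/k+o(1)}$. But then $\log T\asymp\log x$, so the depth of the Vinogradov--Korobov region at height $T$ is only $\delta(T)\asymp(\log x)^{-2/3}(\log\log x)^{-1/3}$, and the saving $x^{-\delta(T)/k}$ yields at best $\exp(-c'(\log x)^{1/3}(\log\log x)^{-1/3})$, which is genuinely worse than the claimed $\exp(-c(\log x)^{3/5}(\log\log x)^{-1/5})$. (For powerfree-type there is the further obstacle that $\zeta(s)$ sits in the numerator at $\Re s\approx\frac1k\le\frac12$, contributing an additional polynomial factor $|t|^{1/2-1/k+\ep}$ on the contour.) The $(\log x)^{3/5}$ exponent survives in the paper's argument precisely because the convolution method reduces to the summatory function of $Y$ evaluated at $x^{1/k}$---e.g.\ $M(x^{1/k})$ for M\"obius-type, Walfisz's bound on the $k$-free count, or Proposition~\ref{prop:conv-1} for the case $\ep_k=-1$---and $\log(x^{1/k})\asymp\log x$ preserves the exponent quality, whereas a direct Perron truncation at $T\approx x^{1-1/k}$ destroys it. (Your argument does work in the boundary case $k=1$, which is just the classical $M(x)$ estimate.)
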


\begin{rem}
Note that if $f\in\cF$ has initial index~$k$, then the first nonzero contribution to the main term~\eqref{eq:G} for $F_f(x)$ has order of magnitude $x^{1/k}$. Therefore these unconditional upper bounds for the error term are only of modest strength for powerfull-type fake~$\mu$'s, and when~$f$ is of M\"obius-type we do not even have power savings in~$x$. This challenge is already reflected in the classical cases $f= \mu$ and $f= \lambda$, where the upper bound~\eqref{eq:boundpowerfree} with $k=1$ is the best known estimate for the Mertens sum $M(x)$ and for the error term $\Delta^L(x) = L(x) - \sqrt x/\zeta(\frac12)$ in P\'olya's problem.
\end{rem}

\begin{thm}\label{thm:ubRH}
Let $f \in \cF$. Assuming the Riemann hypothesis, we have the following upper bounds on $E_f(x)$:
\begin{enumerate}
\item If~$f$ is of M\"obius-type or powerfull-type with initial index~$k$, then there exists a positive constant~$C$ such that $E_f(x)\ll x^{1/2k}\exp(C\frac{\log x}{\log \log x})$.
\item If~$f$ is of powerfree-type with critical index~$k$, then $E_f(x)\ll_\ep x^{1/(k+1)+\ep}$ for each $\ep>0$.
\end{enumerate}
\end{thm}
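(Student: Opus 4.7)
The plan is to apply Perron's formula combined with a contour shift, guided by the zeta-factorization from Section~\ref{sec:alg}. Starting from the truncated Perron formula
\[
F_f(x) = \frac{1}{2\pi i}\int_{c-iT}^{c+iT} D_f(s)\frac{x^s}{s}\,\ud s + O\bigl(x^c T^{-1}\log x\bigr)
\]
for some $c > 1$, I would write $D_f(s) = \prod_{j=1}^{N}\zeta(js)^{a_j}\cdot U_N(s)$, iterating the zeta-factorization algorithm to make $N$ large enough that $U_N(s)$ is absolutely convergent, and hence uniformly bounded, on the vertical line to which the contour will be shifted. Under RH every nontrivial zero of each $\zeta(js)$ lies on $\Re(s) = 1/(2j)$, so the only singularities of $D_f(s)$ crossed during the shift are the real poles at $s = 1/j$; the sum of their residues almost equals $G_f(x)$, with any residue terms from~\eqref{eq:G} not crossed during the shift being of size $O(x^{1/(2k)})$ (or smaller) and thus absorbed into the final error.

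For part~(1), I would shift the contour to $\Re(s) = 1/(2k)+\delta$ with $\delta \asymp 1/\log T$. On this line the factor $\zeta(ks)^{a_k}$ is evaluated at $\Re(ks) = 1/2 + k\delta$, where the classical RH-conditional estimate (Titchmarsh, Ch.~14) gives $|\zeta(ks)|^{\pm 1} \ll \exp(C\log|t|/\log\log|t|)$. The remaining factors $\zeta(js)^{a_j}$ with $j > k$ sit at real parts bounded strictly away from $1/2$, where Lindel\"of (implied by RH) furnishes $\zeta(js)^{\pm 1}\ll|t|^\ep$; no factors with $j < k$ arise, because the initial-index condition forces $a_j = 0$ for $j < k$ in both the M\"obius-type and powerfull-type cases. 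Balancing the truncation error $x^c/T$ against the shifted-contour integral $\sim x^{1/(2k)+\delta}\exp(C\log T/\log\log T)$, by taking $T$ to be a suitable positive power of~$x$, yields $E_f(x) \ll x^{1/(2k)}\exp(C\log x/\log\log x)$.

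For part~(2), I would push the contour further, to $\Re(s) = 1/(k+1)+\ep$. Since the critical index equals~$k$, taking $N = k$ already makes $U_k(s)$ absolutely convergent (hence bounded) on this line. The factor $\zeta(s)$ lies inside the critical strip but is controlled by $|t|^\ep$ via Lindel\"of; each $\zeta(js)^{a_j}$ for $2\le j \le k$ is evaluated at $\Re(js) = j/(k+1)+j\ep$, and Lindel\"of gives $\zeta(js)^{\pm 1}\ll|t|^\ep$ whether this point falls left, on, or to the right of $\Re = 1/2$. The crossed residue at $s = 1$ contributes $\faf(1) x$, and any remaining terms of $G_f(x)$ (coming from possible real poles of $D_f$ at $s = 1/j$ for $k+1 \leq j \leq 2k$, which lie strictly to the left of the shifted contour) are of size $O(x^{1/(k+1)})$ and absorb into the error, yielding $E_f(x)\ll_\ep x^{1/(k+1)+\ep}$.

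The main obstacle is the powerfull-type sub-case of part~(1): there $\zeta(ks)^{a_k}$ with $a_k\ge 1$ and several $\zeta(js)^{a_j}$ with $j > k$, $a_j < 0$ multiply together, and one must verify that the product on the shifted line is still dominated by the single $\exp(C\log|t|/\log\log|t|)$ bound coming from $\zeta(ks)$, with the remaining factors contributing only $|t|^\ep$ and being absorbed. Once this uniform bound on the product is in hand, the rest of the argument is standard Mellin-transform bookkeeping together with careful estimation of the horizontal pieces of the contour, which contribute errors of the same order as the vertical ones.
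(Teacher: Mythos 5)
Your treatment of part (a) is essentially the paper's argument: shift the truncated Perron contour to $\sigma = \frac1{2k}+\delta$ with $\delta$ shrinking to zero, invoke the RH-conditional bound of the shape $|\zeta(js)^{\pm1}| \le \exp\bigl(C\frac{\log x}{\log\log x}\bigr)$ on that line (Lemma~\ref{lem:Tep}) for the factors with $k\le j\le 2k-1$, choose $T$ a fixed power of $x$, and balance the truncation error $x/T$ against the vertical piece. Your remark that Lindel\"of could replace the RH bound for the factors with $j>k$ is a mild refinement but unnecessary, since the uniform RH bound already covers them; the powerfull-type case you worry about is handled the same way, with the constant simply accumulating a factor $\sum_j|a_j|$.

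Part (b) has a genuine gap. On the line $\sigma=\frac1{k+1}+\ep$ you claim $\zeta(s)\ll|t|^\ep$ via Lindel\"of, but for $k\ge2$ that line lies strictly to the \emph{left} of $\sigma=\frac12$, and there the functional equation $\zeta(s)=\chi(s)\zeta(1-s)$ with $|\chi(\sigma+it)|\asymp|t|^{1/2-\sigma}$ shows that the best Lindel\"of-type bound is $|\zeta(\sigma+it)|\ll_\ep|t|^{1/2-\sigma+\ep}$, a genuine positive power of $|t|$. Carrying this through the truncated Perron integral and optimizing $T$ gives only $E_f(x)\ll_\ep x^{(k+1)/(3k+1)+\ep}$, strictly weaker than the target $x^{1/(k+1)+\ep}$ (already $x^{3/7}$ versus $x^{1/3}$ when $k=2$, and the gap widens as $k$ grows). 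A deeper contour shift does not repair this; the obstruction is the growth of $\zeta$ left of the critical line, which no choice of~$T$ can fully absorb. The paper takes a different route: Lemma~\ref{lem:conv} reduces the problem, via Dirichlet convolution, to bounding the partial sums of an auxiliary function ($\mu_k^2$ or $g_k$). For $g_k$ the proof of Proposition~\ref{prop:conv-1RH} does use a contour shift, but only to $\sigma=\frac12$, where $\zeta(s)\ll|t|^\ep$ is available. The essential extra device, due to Montgomery and Vaughan, is to first truncate the Dirichlet series for $\zeta(s)^{-2}$ at $n\le y=x^{1/(k+1)}$ and bound the tail $A(ks)$ on the $\sigma=\frac12$ line via Lemma~\ref{lem:poly}, which gives $A(ks)\ll y^{1/2-k\sigma+\ep}$; this decay in $y$ supplies precisely the savings a further contour shift cannot. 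Without this truncation-plus-convolution mechanism, the exponent $\frac1{k+1}$ is out of reach by the Perron method alone.
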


\begin{rem}\
\begin{enumerate}
\item When~$f$ is of M\"obius-type, Theorem~\ref{thm:ubRH}(a) implies (assuming RH) that the oscillations given in Theorem~\ref{thm:mainmu} are essentially best possible, as mentioned in Remark~\ref{happy modbius}.
\item When $f=\mu$, the best-known conditional upper bound on $M(x)$ is due to Soundararajan~\cite{S09}, where he showed that $M(x)\ll x^{1/2}\exp\left((\log x)^{1/2}(\log\log x)^{14}\right)$.
\item When~$f$ is the indicator function of $k$-full numbers, Theorem~\ref{thm:ubRH}(a) improves Ivi\'c's result mentioned in Example~\ref{ex:kfull}, although his result requires only the Lindel\"of hypothesis rather than the full RH.
\item There exist examples (see Example~\ref{ex:k+h} below) of powerfull-type fake~$\mu$'s with initial index~$k$ where the error-term oscillations are as large as $E_f(x) = \Omega_{\pm} (x^{1/(2k+2)})$; so Theorem~\ref{thm:ubRH}(a) is at least reasonably sharp for powerfull-type fake~$\mu$'s.
\item Theorem~\ref{thm:ubRH}(b) extends the result of Montgomery and Vaughan's result concerning the indicator function of~$k$-free numbers (see Example~\ref{ex:kfree}) to all powerfree-type fake~$\mu$'s with critical index $k$. In particular, Theorem~\ref{thm:ubRH}(b) applies when $f=\nu_k$ (see Example~\ref{ex:apostol}) and improves Suryanarayana's result \cite{S77} that $E_f(x)\ll x^{{4k}/(4k^2+1)+o(1)}$.
\end{enumerate}
\end{rem}

The examples in Section~\ref{sec1.1} are far from being a complete list of fake~$\mu$'s already studied in the literature. We introduce one additional family of fake~$\mu$'s to further illustrate Theorems~\ref{thm:unconditional} and~\ref{thm:ubRH}.

\begin{ex}
Bege~\cite{B01} introduced the following generalization of Apostol's M\"obius functions: given integers $2\leq k<m$, the function $\mu_{k,m}$ is the fake~$\mu$ defined via the sequence $(\ep_j)_{j=1}^{\infty}$ with
\[
\ep_j = \begin{cases}
1, &\text{if } 1\le j\le k-1, \\
-1, &\text{if } j=m, \\
0, &\text{otherwise.}
\end{cases}
\]
Note that $\mu_{k,m}$ is of powerfree-type with critical index $k$. Theorem~\ref{thm:unconditional}(b) recovers Bege's unconditional bound~\cite[Theorem~3.1]{B01}, while Theorem~\ref{thm:ubRH}(b) improves Bege's conditional bound $E_{\mu_{k,m}}(x)\ll x^{2/(2k+1)+o(1)}$ \cite[Theorem~3.2]{B01} under~RH to $E_{\mu_{k,m}}(x)\ll x^{1/(k+1)+o(1)}$.
\end{ex}

\subsection*{Notation}
We use several notational conventions that are standard in analytic number theory. In this paper,~$p$ always denotes a prime, and $\sum_p$ and $\prod_p$ represent sums and products over all primes. For a complex number~$s$ we write $s= \sigma+it$, so that $\sigma = \Re(s)$ and $t = \Im(s)$. In addition, $\rho= \beta+i\gamma$ denotes a general nontrivial zero of the Riemann zeta function $\zeta(s)$, so that $\beta = \Re(\rho)$ and $\gamma = \Im(\rho)$. We write $f(x) = \Omega(g(x))$ to mean $\limsup_{x\to\infty} |f(x)|/g(x)>0$, and $f(x) = \Omega_{\pm}(g(x))$ to mean both $\limsup_{x\to\infty} f(x)/g(x)>0$ and $\liminf_{x\to\infty} f(x)/g(x)<0$.

\subsection*{Outline of the paper}

In Section~\ref{sec:alg}, we provide an algorithm to compute the critical index of a function $f\in \cF$. In Section~\ref{sec:omgea}, we prove an oscillation result for $E_f(x)$ based on its critical index and a few other parameters from the algorithm (Theorem~\ref{thm:general} is the most general statement).
In particular, in Section~\ref{sec:applications} we complete the proofs of Theorems~\ref{thm:mainmu} and~\ref{thm:mainkfree}, which apply to M\"obius-type and powerfree-type fake~$\mu$'s, respectively, as well as giving a general result (Theorem~\ref{thm:main_full} below) for powerfull-type fake~$\mu$'s. Together these three results imply Theorem~\ref{thm:weak}.
Finally, in Section~\ref{sec:ub}, we study upper bounds on the error term $E_f(x)$ and prove Theorems~\ref{thm:unconditional} and~\ref{thm:ubRH}.

\section{Zeta-factorizations and the critical index}\label{sec:alg}

In this section, we provide some precise statements about the (partial) zeta-factorizations mentioned in the introduction, including closed formulas and bounds for the resulting exponents and coefficients. With these statements in place, we then describe an algorithm for computing zeta-factorizations with enough factors to determine the critical index of a fake~$\mu$ (recall Definition~\ref{defn:indices}). We also provide several zeta-factorization examples using this algorithm, which we compare to known results from the literature.

One viewpoint we wish to stress is that given a specific Euler product with known numerical coefficients, all analytic number theorists who produced a zeta-factorization of that Euler product would arrive at the same numerical answer using essentially the same procedure as one another. The difficulties lie not in the calculations themselves, but rather in finding an accessible notation we can use to record the results of zeta-factorizations in a general setting.

\subsection{Zeta-factorization of Dirichlet series} \label{zeta product section}
We start with the following lemma for ``one-step" zeta-factorization for a family of Dirichlet series relevant to our discussions. Later, we will apply the lemma recursively to obtain ``multi-step" zeta-factorizations.

\begin{lem}\label{lem:factor}
Let~$t$ be a positive integer, and let $(\eta_j)_{j=t}^{\infty}$ be a sequence of integers. Assume that the Euler product 
$$
A(s) = \prod_{p} \biggl( 1 + \sum_{j=t}^{\infty} \frac{\eta_j}{p^{js}} \biggr)
$$
converges absolutely for $\sigma>1$. Then for $\sigma>1$,
$$
A(s) = \zeta(ts)^{\eta_t} \cdot \prod_{p} \biggl( 1 + \sum_{j=t+1}^{\infty} \frac{\eta_j'}{p^{js}} \biggr),
$$
where the first several values of $\eta_j'$ are
\begin{equation*}
\eta_j'=
\begin{cases}
\eta_j, &\text{if } t+1\leq j \leq 2t-1, \\
\eta_{2t}-\frac12(\eta_t^2+\eta_t), &\text{if } j=2t.
\end{cases}
\end{equation*}
Moreover, $|\eta'_j|\leq (2j|\eta_t|)^{2|\eta_t|}\max_{n\leq j} |\eta_n|$ for all $j\ge t+1$.
\end{lem}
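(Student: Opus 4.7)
The plan is to verify the factorization prime-by-prime by expanding each local Euler factor as a power series in $p^{-s}$. First I would invoke the Euler product $\zeta(ts)^{\eta_t}=\prod_p (1-p^{-ts})^{-\eta_t}$, valid for $\sigma>1/t$ (since $\zeta$ has no zeros there), to reduce the asserted identity to the prime-by-prime statement
\[
\biggl(1+\sum_{j=t}^{\infty}\frac{\eta_j}{p^{js}}\biggr)(1-p^{-ts})^{\eta_t}=1+\sum_{j=t+1}^{\infty}\frac{\eta_j'}{p^{js}}.
\]
Expanding $(1-p^{-ts})^{\eta_t}=\sum_{k\geq 0}(-1)^k\binom{\eta_t}{k}p^{-kts}$ via the generalized binomial theorem (a polynomial when $\eta_t\geq 0$, an absolutely convergent series for $\sigma>0$ when $\eta_t<0$) and adopting the bookkeeping convention $\eta_0=1$, $\eta_n=0$ for $1\leq n<t$, Cauchy multiplication yields the closed form
\[
\eta_j'=\sum_{k=0}^{\lfloor j/t\rfloor}(-1)^k\binom{\eta_t}{k}\eta_{j-kt}.
\]

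Substituting small values of $j$ into this formula would verify the explicit coefficients. One finds $\eta_j'=0$ for $1\leq j\leq t$ (the cancellation at $j=t$ is precisely the purpose of the factorization), $\eta_j'=\eta_j$ for $t+1\leq j\leq 2t-1$ (since $\eta_{j-t}=0$ in that range), and $\eta_{2t}'=\eta_{2t}-\eta_t^2+\binom{\eta_t}{2}=\eta_{2t}-\frac{1}{2}(\eta_t^2+\eta_t)$, as claimed.

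For the uniform bound on $|\eta_j'|$, I would estimate $|\binom{\eta_t}{k}|$ in two cases. If $\eta_t\geq 0$, the binomial coefficient vanishes for $k>\eta_t$ and is otherwise at most $2^{|\eta_t|}$. If $\eta_t<0$, then $|\binom{\eta_t}{k}|=\binom{|\eta_t|+k-1}{k}\leq (|\eta_t|+j)^{|\eta_t|}\leq (2j|\eta_t|)^{|\eta_t|}$ for $k\leq j$ and $|\eta_t|\geq 1$. Combining these with the crude $|\eta_{j-kt}|\leq \max_{n\leq j}|\eta_n|$ and summing over at most $j+1$ values of $k$ shows that $(2j|\eta_t|)^{2|\eta_t|}\max_{n\leq j}|\eta_n|$ dominates $|\eta_j'|$ in both cases. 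Since this growth is polynomial in $j$, the new Euler product converges absolutely for $\sigma>1$, matching the domain of $A(s)$. The main delicate point will be the case $\eta_t<0$, where $\zeta(ts)^{\eta_t}$ appears in the denominator and the binomial series is genuinely infinite; but because each $\eta_j'$ involves only the finitely many terms with $k\leq j/t$, the required estimates on binomial coefficients remain elementary.
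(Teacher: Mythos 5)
Your proposal is correct and essentially the same as the paper's proof: both multiply each Euler factor by $(1-p^{-ts})^{\eta_t}$, expand via the (generalized) binomial theorem with the convention $\eta_0=1$, $\eta_n=0$ for $1\le n<t$, read off the convolution formula $\eta_j'=\sum_{k}(-1)^k\binom{\eta_t}{k}\eta_{j-kt}$, and bound $|\eta_j'|$ by estimating the binomial coefficients in the two sign cases. The only cosmetic difference is that you keep the unified notation $\binom{\eta_t}{k}$ throughout, whereas the paper writes out the $\eta_t>0$ and $\eta_t<0$ expansions separately, and for the bound in the $\eta_t<0$ case the paper sums the binomial coefficients exactly (via the hockey-stick identity) before estimating, while you bound each term uniformly and multiply by the number of terms; both yield the stated $(2j|\eta_t|)^{2|\eta_t|}\max_{n\le j}|\eta_n|$.
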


\begin{proof}
For convenience, we extend the sequence $(\eta_j)$ to all integers indices by defining $\eta_0=1$ and $\eta_j=0$ when $1\le j\le t-1$ or $j\le-1$.
Assume throughout that $\sigma>1$. Then
\begin{equation} \label{mult by zeta ts}
\zeta(ts)^{-\eta_t}A(s) = \prod_{p} \biggl( \sum_{j\in\Z} \frac{\eta_j}{p^{js}} \biggr) \biggl( 1-\frac{1}{p^{ts}} \biggr)^{\eta_t}.
\end{equation}
The lemma is trivial if $\eta_t=0$; we consider two cases according to the sign of~$\eta_t$.

If $\eta_t>0$, then equation~\eqref{mult by zeta ts} becomes
\begin{align*}
\zeta(ts)^{-\eta_t}A(s) &= \prod_{p} \biggl( \sum_{j\in\Z} \frac{\eta_j}{p^{js}} \biggr) \biggl( \sum_{m=0}^{\eta_t} \frac{(-1)^m \binom{\eta_t}{m}}{p^{m ts}} \biggr) 
= \prod_{p} \biggl( \sum_{k\in Z} \frac1{p^{ks}} \sum_{m=0}^{\eta_t} (-1)^m \binom{\eta_t}{m} \eta_{k-tm} \biggr).
\end{align*}
It follows that 
\begin{equation}\label{eq:case1}
A(s) = \zeta(ts)^{\eta_t} \cdot \prod_{p} \biggl( 1 + \sum_{j\in\Z} \frac{\eta_j'}{p^{js}} \biggr)
\quad\text{with}\quad
\eta_j'= \sum_{m=0}^{\eta_t} (-1)^m \binom{\eta_t}{m} \eta_{j-tm}.
\end{equation}
In particular,
\begin{equation*}
\eta_j'=
\begin{cases}
\eta_j=0, &\text{if } j\le -1 \text{ or } 1\leq j\leq t-1, \\
\eta_0=1, &\text{if } j=0, \\
\eta_t-\binom{\eta_t}1\eta_0 = 0, &\text{if } j=t, \\
\eta_j-\binom{\eta_t}1\eta_{j-t} = \eta_j, &\text{if } t+1\leq j \leq 2t-1, \\
\eta_{2t}-\binom{\eta_t}1\eta_t+\binom{\eta_t}{2}\eta_0 = \eta_{2t}-\frac12(\eta_t^2+\eta_t), &\text{if } j=2t.
\end{cases}
\end{equation*}
From equation~\eqref{eq:case1}, we also deduce that
$$
|\eta_j'|\leq \sum_{m=0}^{\eta_t} \binom{\eta_t}{m} |\eta_{j-tm}|\leq \max_{n\leq j} |\eta_n| \sum_{m=0}^{\eta_t} \binom{\eta_t}{m}\leq 2^{\eta_t} \max_{n\leq j} |\eta_n|,
$$
which establishes the lemma in the $\eta_t>0$ case.

On the other hand, if $\eta_t<0$, then equation~\eqref{mult by zeta ts} becomes
\begin{align*}
\zeta(ts)^{-\eta_t}A(s) &= \prod_{p} \biggl( \sum_{j\in\Z} \frac{\eta_j}{p^{js}} \biggr) \biggl( \sum_{m=0}^{\infty} \frac{\binom{m-\eta_t-1}{-\eta_t-1}}{p^{m ts}} \biggr) 
= \prod_{p} \biggl( \sum_{k\in Z} \frac1{p^{ks}} \sum_{m=0}^{\infty} \binom{m-\eta_t-1}{-\eta_t-1} \eta_{k-tm} \biggr).
\end{align*}
and it follows that 
\begin{equation}\label{eq:case2}
A(s) = \zeta(ts)^{\eta_t} \cdot \prod_{p} \biggl( 1 + \sum_{j\in\Z} \frac{\eta_j'}{p^{js}} \biggr)
\quad\text{with}\quad
\eta_j'= \sum_{m=0}^{\infty} \binom{m-\eta_t-1}{-\eta_t-1} \eta_{j-tm}.
\end{equation}
holds for all $j\geq 0$. In particular, we have $\eta_0'=1$, and
\begin{equation*}
\eta_j'=
\begin{cases}
\eta_j=0, &\text{if } j\le -1 \text{ or } 1\leq j\leq t-1, \\
\eta_0=1, &\text{if } j=0, \\
\eta_t+\binom{-\eta_t}1\eta_0 = 0, &\text{if } j=t, \\
\eta_j+\binom{-\eta_t}1\eta_{j-t} = \eta_j, &\text{if } t+1\leq j \leq 2t-1, \\
\eta_{2t}+\binom{-\eta_t}1\eta_t+\binom{1-\eta_t}{2}\eta_0 = \eta_{2t}-\frac12(\eta_t^2+\eta_t), &\text{if } j=2t.
\end{cases}
\end{equation*}
From equation~\eqref{eq:case2}, we also deduce that
\begin{align*}
|\eta_j'| \leq \sum_{m=0}^{\lfloor j/t \rfloor} \binom{m-\eta_t-1}{-\eta_t-1} |\eta_{j-tm}| &\leq \max_{n\leq j} |\eta_n| \sum_{m=0}^{\lfloor j/t \rfloor} \binom{m+|\eta_t|-1}{|\eta_t|-1} \\
&= \max_{n\leq j} |\eta_n| \cdot \frac1{|\eta_t|} \biggl\lfloor \frac{j}{t}+1 \biggr\rfloor \binom{\lfloor j/t \rfloor+|\eta_t|}{|\eta_t|-1} \\
&\leq \max_{n\leq j} |\eta_n| \biggl( \frac{j}{t}+1 \biggr) \biggl( \frac{j}{t}+|\eta_t|\biggr)^{|\eta_t|} \leq \max_{n\leq j} |\eta_n| (2j|\eta_t|)^{2|\eta_t|},
\end{align*}
which establishes the lemma in the $\eta_t<0$ case.
\end{proof}

A follow-up lemma puts into context the significance of the coefficient bound at the end of Lemma~\ref{lem:factor}.

\begin{lem} \label{really does converge lemma}
Let~$n$ be a nonnegative integer. Suppose that there are positive constants $A,B$ such the Euler product
\begin{equation}
U_n(s) = \prod_{p} \biggl(1+\sum_{j=n+1}^{\infty} \frac{\eta_j}{p^{js}} \biggr)
\end{equation}
satisfies $|\eta_j|\leq (Aj)^B$ for all~$j$. Then $U_n(s)$ converges absolutely to an analytic function for $\sigma > \frac1{n+1}$.
\end{lem}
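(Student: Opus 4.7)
The strategy is the standard one for infinite products: an Euler product $\prod_p (1 + a_p(s))$ of analytic functions defines an analytic function on an open set $\Omega$ as soon as $\sum_p |a_p(s)|$ converges uniformly on compact subsets of $\Omega$. Setting $a_p(s) = \sum_{j=n+1}^\infty \eta_j p^{-js}$ (each local factor is analytic on $\sigma>0$ since the polynomial coefficients are summable against any $p^{-js}$), the task reduces to showing that
\[ \sum_p \biggl| \sum_{j=n+1}^\infty \frac{\eta_j}{p^{js}} \biggr| \]
converges uniformly on compact subsets of $\{\sigma > 1/(n+1)\}$. By the triangle inequality and the hypothesis $|\eta_j|\le (Aj)^B$, it suffices to bound the nonnegative double sum $\sum_p \sum_{j \ge n+1} (Aj)^B p^{-j\sigma}$, which I would rearrange by Fubini--Tonelli as $\sum_{j\ge n+1}(Aj)^B\sum_p p^{-j\sigma}$.

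After this interchange, the heart of the matter is estimating $\sum_p p^{-j\sigma}$ for $j\ge n+1$ in a way that decays in~$j$. Here I would fix an auxiliary parameter $\sigma_0$ with $1/(n+1) < \sigma_0 < \sigma$ and split $p^{-j\sigma} = p^{-j\sigma_0} \cdot p^{-j(\sigma - \sigma_0)}$. Since $p \ge 2$, the second factor is at most $2^{-j(\sigma-\sigma_0)}$, which supplies the crucial geometric decay in~$j$. The first factor, summed over primes, is bounded uniformly for $j \ge n+1$ by the finite constant $C := \sum_p p^{-(n+1)\sigma_0}$, which converges precisely because $(n+1)\sigma_0 > 1$. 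Consequently
\[ \sum_{j=n+1}^{\infty} (Aj)^B \sum_p p^{-j\sigma} \;\le\; C \sum_{j=n+1}^{\infty} (Aj)^B\, 2^{-j(\sigma-\sigma_0)}, \]
and the right-hand side converges because polynomial growth is dominated by exponential decay.

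For uniform convergence on a closed strip $\{\sigma \ge \sigma_1\}$ with $\sigma_1 > 1/(n+1)$, I would simply choose $\sigma_0 \in (1/(n+1), \sigma_1)$ once and for all; then $2^{-j(\sigma - \sigma_0)} \le 2^{-j(\sigma_1 - \sigma_0)}$ holds uniformly for $\sigma \ge \sigma_1$, so the same geometric series furnishes a uniform majorant independent of~$s$. Since any compact subset of $\{\sigma > 1/(n+1)\}$ lies in such a strip, this yields uniform convergence on compacta, and hence absolute convergence and analyticity of $U_n(s)$ on the full half-plane $\sigma > 1/(n+1)$. There is no serious analytic obstacle in this argument; the only subtlety is remembering to introduce the auxiliary $\sigma_0$ so that one factor of $p^{-j\sigma_0}$ can be spent on the prime summation while a genuine exponential factor $2^{-j(\sigma-\sigma_0)}$ remains to absorb the polynomial bound $(Aj)^B$.
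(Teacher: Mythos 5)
Your proposal is correct, and it takes a genuinely different route from the paper's proof, even though both start from the same triangle-inequality reduction to $\sum_p \sum_{j\ge n+1} (Aj)^B p^{-j\sigma}$. You interchange the order of summation (Tonelli, which is fine for nonnegative terms) and then introduce an auxiliary parameter $\sigma_0 \in (1/(n+1), \sigma)$, splitting $p^{-j\sigma} = p^{-j\sigma_0}\cdot p^{-j(\sigma-\sigma_0)}$: the first factor feeds the prime sum a convergent $\sum_p p^{-(n+1)\sigma_0}$, while the second supplies a genuine geometric factor $2^{-j(\sigma-\sigma_0)}$ that swallows the polynomial bound $(Aj)^B$. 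The paper instead keeps the summation order as $\sum_p(\cdots)$, reindexes $j = i + (n+1)$ to pull out $p^{-(n+1)\sigma}$, bounds $(i+n+1)^B \le (n+1)^B (i+1)\cdots(i+B)$, and then recognizes the closed form $\sum_{i\ge 0}(i+1)\cdots(i+B)x^i = B!(1-x)^{-B-1}$ to get an explicit per-prime constant, concluding with $\ll_{A,B,n}\sum_p p^{-(n+1)\sigma}$. Both arguments are elementary and yield the same local uniformity on strips $\sigma\ge\sigma_1 > 1/(n+1)$; yours is slightly more flexible (no need to know the generating-function identity) at the cost of the auxiliary $\sigma_0$, while the paper's is more directly computational and produces an explicit constant without a choice to be made. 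Your treatment of uniform convergence on compacta is also clean and matches what the paper tacitly invokes.
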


\begin{proof}
To show absolute convergence, we must bound
\begin{align*}
\sum_{p}  \sum_{j=n+1}^{\infty} \frac{|\eta_{j}|}{|p^{js}|}
\le \sum_{p} \sum_{j=n+1}^{\infty} \frac{(Aj)^B}{p^{j\sigma}} &= A^B \sum_p \frac1{p^{(n+1)\sigma}} \sum_{i=0}^\infty \frac{(i+n+1)^B}{p^{i\sigma}} \\
&\le A^B \sum_p \frac1{p^{(n+1)\sigma}} \sum_{i=0}^\infty \frac{(n+1)^B(i+1)\cdots(i+B)}{p^{i\sigma}} \\
&= A^B \sum_p \frac{(n+1)^B}{p^{(n+1)\sigma}} B!(1-p^{-\sigma})^{-B-1} \ll_{A,B,n} \sum_p \frac1{p^{(n+1)\sigma}}
\end{align*}
which converges by the assumption $\sigma>\frac1{n+1}$. Moreover, this convergence is locally uniform in~$s$, which implies that the infinite product is indeed analytic.
\end{proof}

\subsection{An algorithm for computing the critical index of $f \in \cF$}

We begin by using Lemma~\ref{lem:factor} to quickly compute the desired factorization of Dirichlet series $D_f(s)$ for a M\"obius-type fake~$\mu$.

\begin{prop}\label{prop:typeI}
Let $f\in \cF$ be of M\"obius-type with initial index~$k$. Then the critical index of~$f$ also equals~$k$. Moreover, for $\sigma>1$, 
\begin{equation}
D_f(s) = U_{2k}(s) \cdot \prod_{j=k}^{2k} \zeta(js)^{\ep_j},    \quad \text{where} \quad U_{2k}(s) = \prod_{p} \biggl(1+\sum_{j=2k+1}^{\infty} \frac{\eta_j}{p^{js}} \biggr)
\end{equation}
with $|\eta_j|\leq (2j)^{2(k+1)}$.
\end{prop}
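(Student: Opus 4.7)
The plan is to obtain the claimed factorization by iterating Lemma~\ref{lem:factor} exactly $k+1$ times, with $t = k, k+1, \ldots, 2k$, peeling off one factor $\zeta(ts)^{\ep_t}$ at each step. Because~$f$ has initial index~$k$, we have $\ep_1 = \cdots = \ep_{k-1} = 0$, so $D_f(s) = \prod_p \bigl( 1 + \sum_{j\ge k} \ep_j/p^{js} \bigr)$ already has the shape required to apply the lemma with input coefficients $\eta_j = \ep_j$ and $t = k$.

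The heart of the argument is an inductive invariant: after the $i$-th application of Lemma~\ref{lem:factor} (with $t = k+i-1$, for $1 \le i \le k+1$), the remaining Euler product has coefficients $\eta_j^{(i+1)} = \ep_j$ throughout the range $k+i \le j \le 2k$. For the base case $i = 1$, the lemma immediately gives $\eta_j^{(2)} = \ep_j$ for $k+1 \le j \le 2k-1$; at the boundary $j = 2k$ it gives $\eta_{2k}^{(2)} = \ep_{2k} - \tfrac12(\ep_k^2 + \ep_k)$, and here the M\"obius-type hypothesis $\ep_k = -1$ is essential, forcing the correction term to vanish and yielding $\eta_{2k}^{(2)} = \ep_{2k}$. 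For the inductive step with $i \ge 1$, applying the lemma with $t = k+i$ preserves $\eta_j^{(i+2)} = \eta_j^{(i+1)}$ throughout $k+i+1 \le j \le 2(k+i)-1$, which already contains $[k+i+1, 2k]$; no new correction is needed since the boundary index $j = 2(k+i)$ is strictly greater than~$2k$ and thus lies outside our range of concern. In particular $\eta_{k+i}^{(i+1)} = \ep_{k+i}$, so the next factor to peel off really is $\zeta((k+i)s)^{\ep_{k+i}}$, and after $k+1$ iterations we arrive at the claimed factorization $D_f(s) = \prod_{j=k}^{2k} \zeta(js)^{\ep_j} \cdot U_{2k}(s)$ with $U_{2k}$ supported on $j \ge 2k+1$.

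For the coefficient bound, I would track the estimate $|\eta_j'| \le (2j|\eta_t|)^{2|\eta_t|} \max_{n \le j}|\eta_n|$ from Lemma~\ref{lem:factor}. Since $|\ep_t| \le 1$ for the coefficient being factored out at each stage, each iteration multiplies the running maximum by at most $(2j)^2$. Starting from $\max_n|\ep_n| \le 1$ and iterating $k+1$ times produces $|\eta_j| \le (2j)^{2(k+1)}$, as claimed. The absolute convergence of each intermediate Euler product for $\sigma > 1$---needed in order to legitimately re-apply Lemma~\ref{lem:factor}---follows from Lemma~\ref{really does converge lemma} together with these coefficient bounds, since each intermediate product is supported on $j \ge k+1 \ge 2$. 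Finally, the critical index is read off from the factorization after the first iteration, $D_f(s) = \zeta(ks)^{\ep_k} \cdot U_k(s)$: the exponents satisfy $a_1 = \cdots = a_{k-1} = 0$ and $a_k = -1 < 0$, so the critical index equals~$k$ by Definition~\ref{defn:indices}.

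The only genuinely delicate point is the base case $i = 1$, where the specific identity $\ep_k^2 + \ep_k = 0$---valid precisely because $\ep_k = -1$---is required to extend the automatic matching range $[k+1, 2k-1]$ of Lemma~\ref{lem:factor} all the way up to~$2k$. This is also the only place where the M\"obius-type assumption enters the argument; every subsequent iteration is painless because the lemma's matching range $[k+i+1, 2(k+i)-1]$ comfortably overshoots the relevant window $[k+i+1, 2k]$, so no further lucky cancellations need to be engineered.
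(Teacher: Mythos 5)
Your proof is correct and follows essentially the same route as the paper: iterate Lemma~\ref{lem:factor} for $t=k,k+1,\dots,2k$, observe that the $\ep_k=-1$ hypothesis kills the correction term $\frac12(\ep_k^2+\ep_k)$ at $j=2k$ in the very first step, note that every later step's boundary $j=2t$ lies beyond $2k$, and propagate the coefficient bound $(2j)^{2(t-k)}$ inductively. The paper phrases the same induction via an explicit claim that $D_f(s)=U_t(s)\prod_{j=k}^{t-1}\zeta(js)^{\ep_j}$ with $\theta_j^{(t)}=\ep_j$ for $t\le j\le 2k$ and $|\theta_j^{(t)}|\le(2j)^{2(t-k)}$; your invariant and bookkeeping are the same modulo notation.
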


\begin{rem}
The coefficient bound $|\eta_j|\leq (2j)^{2(k+1)}$ implies, by Lemma~\ref{really does converge lemma}, that $U_{2k}(s)$ is analytic for $\sigma>1/(2k+1)$.
\end{rem}

\begin{proof}[Proof of Proposition~\ref{prop:typeI}]
We begin by setting $\theta_j^{(k)}=\ep_j$ for all~$j$ and writing
\begin{equation} \label{prop:typeI base case}
U_k(s) = \prod_p \biggl( 1 + \sum_{j=k}^\infty \frac{\theta_j^{(k)}}{p^{js}} \biggr) = D_f(s).
\end{equation}
We claim that for each $t=k,k+1,\dots,2k+1$, we can write
\[
D_f(s) = U_t(s) \prod_{j=k}^{t-1} \zeta(js)^{\ep_j}
\quad\text{with}\quad
U_t(s) = \prod_p \biggl( 1 + \sum_{j=t}^\infty \frac{\theta_j^{(t)}}{p^{js}} \biggr),
\]
where $\theta_j^{(t)} = \ep_j$ for all $t\le j\le 2k$ and
$|\theta_j^{(t)}| \le (2j)^{2(t-k)}$ for all~$j\in\N$. The base case $t=k$ is exactly equation~\eqref{prop:typeI base case}, whereas deriving the case $t+1$ from the case~$t$ is a direct application of Lemma~\ref{lem:factor}. (In the first step going from $t=k$ to $t=k+1$, it is important to note that $\ep_k=-1$ implies that $\ep_{2k} - \frac12(\ep_k^2+\ep_k)=\ep_{2k}$. The fact that $\ep_k=-1$ also confirms that the critical index of~$f$ equals~$k$.) At the end of this recursive process, the final case $t=2k+1$ is the statement of the proposition, with $\eta_j = \theta_j^{(2k+1)}$.
\end{proof}

Next, we consider $f \in \cF$ of powerfree-type and powerfull-type. In this case, before applying Lemma~\ref{lem:factor}, we need to first determine the critical index of~$f$. Algorithm~\ref{alg} below computes the critical index of~$f$, as well as principal indices of~$f$ defined below. Based on the algorithm, we further establish Theorem~\ref{thm:algorithm} on the partial zeta-factorization of $D_f(s)$ into the desired form~\eqref{eq:index}. The introduction of principal indices plays a crucial role in describing Algorithm~\ref{alg} as well as in stating Theorem~\ref{thm:algorithm}.

\begin{defn}\label{defn:principal indices}
Suppose that~$f\in\cF$ is defined via the sequence $(\ep_j)$ and has critical index~$\ell$, so that we can write $D_f(s)$ in the form given in equations~\eqref{eq:index} and~\eqref{eq:Uell} with $a_1, a_2, \ldots, a_{\ell-1}\ge0$ and $a_\ell<0$. We define the {\em principal~indices} of~$f$ to be those numbers $1\leq j\leq\ell-1$ for which $a_j>0$ (indicating that a power of $\zeta(js)$ is truly present in the numerator of the zeta-factorization~\eqref{eq:index}).
\end{defn}

In the algorithm below, for a positive integer~$j$ and a set of positive integers $\{c_1,c_2,\ldots, c_m\}$, we define the number of \emph{representations of~$j$ from $\{c_1,c_2,\ldots, c_m\}$}, denoted by~$n_j$ in the algorithm, to be the number of nonnegative integer solutions $(\alpha_1, \alpha_2, \ldots, \alpha_m)$ to the equation $\sum_{i=1}^m \alpha_i c_i=j$. 

\begin{algorithm} \label{alg}
$c_1 \gets \text{ initial index of~$f$}$\\
$m \gets 1$\\
$j \gets c_1+1$\\
\While {true}
{
$n_{j} \gets \text{the number of representations of } j \text { from } \{c_1,c_2,\ldots, c_m\}$\\
\If{$n_{j}=0$ \and $\ep_{j}=1$}
{
    $c_{m+1} \gets j$\\
    $m \gets m+1$\\
}
\If{$n_{j} > \ep_j$}
{
    $M \gets m$\\
    $\ell \gets j$\\
    \Return $\ell, c_1, c_2, \ldots, c_M$
}
$j \gets j+1$
}
\caption{Compute the critical index and principal indices of $f\in \cF$.}
\end{algorithm}

\begin{thm}\label{thm:algorithm}
Let $f\in \cF$ be of powerfree-type or powerfull-type. Algorithm~\ref{alg} terminates in finitely many steps and computes the critical index of~$f$ (denoted by $\ell$), as well as the principal indices $c_1<c_2<\cdots<c_M$ of~$f$. Moreover, for $\sigma>1$, we have the factorization
\begin{equation}\label{eq:factorkfull}
D_f(s) =U_{2\ell}(s) \cdot \frac{\prod_{j=1}^{M} \zeta(c_j s)}{\zeta(\ell s)^{n_{\ell}-\ep_{\ell}}} \cdot \prod_{j= \ell+1}^{2\ell} \zeta(js)^{a_j},
\end{equation}
with 
\begin{equation}\label{eq:a_j}
a_j=
\begin{dcases}
    \sum_{I \subset\{1,\ldots,M\}} (-1)^{\#I} \ep_{j-\sum_{i \in I}c_i}, & \text{if } \ell+1\leq j \leq 2\ell-1, \\
     -\frac{(\ep_{\ell}-{n_{\ell}})^2+\ep_{\ell}-{n_{\ell}}}2 + \sum_{I \subset\{1,\ldots,M\}} (-1)^{\#I} \ep_{2\ell-\sum_{i \in I}c_i}, & \text{if }j=2\ell,
\end{dcases}
\end{equation}
where $M<\ell$ and $n_{\ell}$ are defined in Algorithm~\ref{alg}, and we have set $\ep_0=1$ and $\ep_j=0$ for $j<0$.
Furthermore,
$$
U_{2\ell}(s) = \prod_{p} \biggl(1+\sum_{j=2\ell+1}^{\infty} \frac{\eta_j}{p^{js}} \biggr),
$$ 
and there exist constants~$A$ and~$B$, depending only on~$\ell$, such that $|\eta_j|\leq (Aj)^B$ for all~$j\ge2\ell+1$.
\end{thm}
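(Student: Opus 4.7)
The plan is to break the proof into three coordinated parts: termination of Algorithm~\ref{alg}, correctness of its output as the critical index and principal indices together with the explicit zeta-factorization~\eqref{eq:factorkfull}, and the polynomial coefficient bound on $U_{2\ell}(s)$.

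For termination, I would argue by contradiction: if Algorithm~\ref{alg} never halts, then at every index $j > c_1$ reached one must have $n_j \le \ep_j$, which---since $n_j \ge 0$ and $\ep_j \in \{-1,0,1\}$---forces $\ep_j \in \{0,1\}$ and moreover $\ep_j = 0 \Rightarrow n_j = 0$, $\ep_j = 1 \Rightarrow n_j \le 1$. For $f$ of powerfree-type ($c_1 = 1$), the representation count $n_j^{\{1\}} = 1$ for every $j \ge 1$ forces $\ep_j \equiv 1$, rendering $f$ the indicator of first powers, which is excluded from $\cF$. For $f$ of powerfull-type ($c_1 \ge 2$), if only $c_1$ is ever added to the set of principal indices then $\ep_j = 1$ exactly when $c_1 \mid j$, so $f$ is the indicator of $c_1$-th powers (also excluded); otherwise a second principal index $c_2 > c_1$ is eventually added, and at the inevitably reached index $j = c_1 c_2$ the two distinct representations $c_2\cdot c_1$ and $c_1\cdot c_2$ (distinct since $c_1 \ne c_2$) give $n_j \ge 2 > \ep_j$, contradicting the no-halt assumption.

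For correctness, I would induct on the algorithm's stages. At stage $m$, let $\alpha_j^{(m)}$ denote the coefficients in the Euler product of $\tilde A_m(s) := D_f(s) / \prod_{i=1}^{m-1} \zeta(c_i s)$; direct multiplication of local factors at each prime $p$ yields the inclusion-exclusion formula $\alpha_j^{(m)} = \sum_{I \subseteq \{1,\ldots,m-1\}} (-1)^{|I|} \ep_{j - \sum_{i\in I} c_i}$ (with $\ep_0 = 1$ and $\ep_j = 0$ for $j < 0$). Convolving with the generating function for the representation count $n_j^{(m)}$ from $\{c_1,\ldots,c_{m-1}\}$ gives $\ep_j = \sum_i n_{j-i}^{(m)} \alpha_i^{(m)}$, which shows inductively that at the first $j > c_{m-1}$ where $\alpha_j^{(m)}$ is nonzero, it equals $\ep_j - n_j^{(m)}$. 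The algorithm's trichotomy matches the sign of this quantity exactly: a value of $+1$ (achieved only when $\ep_j = 1$, $n_j = 0$) triggers the addition of $c_m := j$; a value of $0$ signals no action; a negative value signals halting with $\ell = j$. To obtain~\eqref{eq:factorkfull} I would then apply Lemma~\ref{lem:factor} iteratively with $t = \ell, \ell+1, \ldots, 2\ell$, factoring out $\zeta(\ell s)^{\ep_\ell - n_\ell}$ followed by each $\zeta(js)^{a_j}$. Because Lemma~\ref{lem:factor} preserves coefficients in the range $t+1 \le j \le 2t-1$, the values $a_j$ for $\ell+1 \le j \le 2\ell - 1$ remain the inclusion-exclusion expressions $\alpha_j^{(M+1)}$; only the first application (with $t = \ell$) alters the coefficient at index $2\ell$, subtracting $\tfrac12\bigl((\ep_\ell - n_\ell)^2 + (\ep_\ell - n_\ell)\bigr)$, precisely matching~\eqref{eq:a_j}.

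Finally, the coefficient bound $|\eta_j| \le (Aj)^B$ would follow by iterating the estimate $|\eta'_j| \le (2j|\eta_t|)^{2|\eta_t|} \max_{n \le j} |\eta_n|$ of Lemma~\ref{lem:factor} through the $O(\ell)$ applications: starting from $|\ep_j| \le 1$, each step multiplies by at most a polynomial factor in $j$, so the final bound remains polynomial with constants depending only on $\ell$. The principal obstacle I anticipate is the bookkeeping in the inductive correctness argument---one must simultaneously verify at each stage that the earlier coefficients truly vanish, that the inclusion-exclusion formula agrees with the iterated Lemma~\ref{lem:factor} applications, and that the algorithm's discrete decisions remain consistent with the analytic factorization. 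The termination argument for powerfull-type, while brief, also hinges on the nontrivial observation that the index $j = c_1 c_2$ admits two distinct representations once a second principal index enters the set.
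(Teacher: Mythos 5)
Your proposal is correct and follows the same overall architecture as the paper's proof (termination, inductive correctness of the algorithm, iterated application of Lemma~\ref{lem:factor} for the zeta-factorization and coefficient bound), but your correctness argument uses a noticeably cleaner technical device that is worth commenting on. Where the paper works forward from the inclusion--exclusion formula $\theta_j^{(m)} = \sum_{I\subset\{1,\ldots,m\}}(-1)^{\#I}\ep_{j-\sum_{i\in I}c_i}$ and introduces three auxiliary parameters $r_m,s_m,t_m$ (the first index with no representation and $\ep\ne 0$, with exactly one representation and $\ep\ne 1$, with two or more representations, respectively), carrying out a somewhat laborious three-case binomial-theorem computation to evaluate $\theta_j^{(m)}$, you instead invert the relationship and use the convolution identity $\ep_j=\sum_i n_{j-i}^{(m)}\alpha_i^{(m)}$. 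Once the earlier coefficients $\alpha_i^{(m)}$ are known to vanish for $1\le i<j$ (which follows from the inner induction on $j$ together with the algorithm's decision at each step), this identity instantly yields $\alpha_j^{(m)}=\ep_j-n_j^{(m)}$, matching the algorithm's trichotomy without any case split on the number of representations. The two approaches are mathematically equivalent (and you still need the inclusion--exclusion formula to state the exponents $a_j$ in~\eqref{eq:a_j}), but your route avoids the $r_m,s_m,t_m$ bookkeeping and the three-case analysis and is arguably the more transparent way to establish the key claim $\theta_{c_{m+1}}^{(m)}=\ep_{c_{m+1}}-n_{c_{m+1}}$. The termination argument by contradiction and the coefficient bound via $O(\ell)$ iterations of Lemma~\ref{lem:factor} coincide with the paper's in substance. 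One small point worth spelling out in a full write-up: in the termination argument for the powerfull case with a second principal index $c_2$, you should note explicitly that the representation count $n_{c_1c_2}$ used by the algorithm when it reaches $j=c_1c_2$ is taken with respect to the \emph{current} set of principal indices, which contains $\{c_1,c_2\}$ as a subset and hence gives $n_{c_1c_2}\ge 2$ regardless of any further additions; and in the correctness step you should explicitly record the inner induction on $j$ that justifies discarding all but the $i=0$ and $i=j$ terms in the convolution.
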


\begin{rem}
Again, the coefficient bound $|\eta_j|\leq (Aj)^B$ implies that $U_{2\ell}(s)$ is analytic for $\sigma>1/(2\ell+1)$ by Lemma~\ref{really does converge lemma}.
\end{rem}

\begin{proof}[Proof of Theorem~\ref{thm:algorithm}]
We first show that Algorithm~\ref{alg} terminates in finitely many steps; equivalently, we show that the critical index of the sequence~$(\ep_j)$ is finite. If there is a positive integer~$j$ such that $\ep_{j}=-1$, then the second {\bf if} statement of the algorithm ensures that the critical index of~$(\ep_j)$ is at most~$j$. Thus, we can assume that $\ep_j \in \{0,1\}$ for all~$j$. Let~$k$ be the initial index of~$f$.
\begin{itemize}
\item If $\ep_j=0$ for all indices~$j$ that are not multiples of~$k$, then let~$k'$ be the smallest multiple of~$k$ such that $\ep_{k'}\neq 1$ (such a~$k'$ must exist by the exclusion of trivial fake~$\mu$'s from the family~$\cF$). Then the second {\bf if} statement of the algorithm ensures that the critical index of~$(\ep_j)$ is at most~$k'$.
\item Otherwise, let~$k'$ be the smallest integer with $\ep_{j}=1$ that is not a multiple of~$k$. Then the first {\bf if} statement of the algorithm sets $c_2=k'$. Since $\operatorname{lcm}(k,k')$ has at least two representations from $\{k,k'\} = \{c_1,c_2\}$, the second {\bf if} statement of the algorithm ensures that the critical index of~$(\ep_j)$ is at most~$\operatorname{lcm}(k,k')$.
\end{itemize}

Next we make the initial definitions $D_0(s) =D_f(s)$ and $\theta_j^{(0)}= \ep_j$ for all~$j$.

\smallskip{\em Main goal}: We will show inductively, for each $1\le m \leq M$, that~$c_m$ is the smallest positive integer~$j$ such that $\theta_j^{(m-1)}$ is nonzero, and moreover that $\theta_{c_m}^{(m-1)}=1$. These claim are trivial for $m=1$, as~$c_1$ is the initial index of~$f$, and $\ep_{c_1}=1$ since~$f$ is not of M\"obius-type. 

For the inductive step, fix $1\leq m \leq M-1$, and assume both that~$c_m$ is the smallest positive integer~$j$ such that $\theta_j^{(m-1)}$ is nonzero and that $\theta_{c_m}^{(m-1)}=1$. We write $\theta_0^{(m-1)}=1$, and for convenience we adopt the convention that $\theta_j^{(m-1)}=0$ when $j\le-1$. Set $D_m(s) = D_{m-1}(s)/\zeta(c_ms)$, and let $\theta_j^{(m)}$ be defined by
$$
D_m(s) = \prod_{p} \biggl( \sum_{j=0}^{\infty} \frac{\theta_j^{(m)}}{p^{js}} \biggr)
$$
for $\sigma>1$. By the definition of $D_m(s)$ we have $\theta_j^{(m)}= \theta_j^{(m-1)}-\theta_{j-c_m}^{(m-1)}$, and then by induction it is easy to show that
\begin{equation}\label{eq:epjm}
\theta_j^{(m)}= \sum_{I \subset \{1,\ldots,m\}} (-1)^{\#I} \theta^{(0)}_{j-\sum_{i \in I}c_i}
= \sum_{I \subset \{1,\ldots,m\}} (-1)^{\#I} \ep_{j-\sum_{i \in I}c_i}.
\end{equation}

We define three parameters $r_m,s_m,t_m$ as follows. Let~$r_m$ be the smallest positive integer with $\ep_{r_m}\neq 0$ such that~$r_m$ has no representations from $\{c_1,c_2,\ldots, c_m\}$. Let~$s_m$ be the smallest positive integer with $\ep_{s_m}\neq 1$ that has exactly one representation from $\{c_1,c_2,\ldots, c_m\}$. Finally, let~$t_m$ be the smallest positive integer that has at least two representations from $\{c_1,c_2,\ldots, c_m\}$. If any of these numbers $r_m,s_m,t_m$ does not exist, we regard it as $+\infty$.

Now set $c_{m+1}= \min \{r_m,s_m,t_m\}$; we claim that $c_{m+1}$ is finite. This is easy to see for $m \geq 2$, since in this case $c_1c_2$ has at least two representations from $\{c_1,c_2,\ldots, c_m\}$ and thus $c_{m+1}\leq t_m\leq c_1c_2$. It remains to consider the case $m=1$, in which every nonnegative integer automatically has at most one representation from~$\{c_1\}$. But $r_1=s_1=+\infty$ would mean that $\ep_j=1$ if~$j$ is a multiple of~$c_1$ and $\ep_j=0$ otherwise; however, this sequence results in a trivial fake~$\mu$ which has been ruled out in the definition of the family~$\cF$.

{\em Subgoal}: Next we consider the range $1\leq j\leq c_{m+1}$ and determine the values $\theta_{j}^{(m)}$ in this range. We will show that $\theta_{j}^{(m)}=0$ when $j<c_{m+1}$, and also that $\theta_{c_{m+1}}^{(m)}=\ep_{c_{m+1}} - n_{c_{m+1}} \ne0$. We will need to consider three different cases depending on which of $r_m,s_m,t_m$ is smallest. Note that~$c_{m+1}\le r_m$ and~$c_{m+1}\le s_m$ and~$c_{m+1}\le t_m$ by definition, so we will not need to consider values of~$j$ above any of these parameters.

\begin{enumerate}
\item Assume that~$j$ has no representations from $\{c_1,c_2,\ldots, c_m\}$. For each subset~$I$ of~$\{1,\dots,m\}$, it follows that $j-\sum_{i \in I}c_i$ has no representations from $\{c_1,c_2,\ldots, c_m\}$ either and thus $\ep_{j-\sum_{i \in I}c_i}=0$ by the definition of~$r_m$. Equation~\eqref{eq:epjm} then implies that $\theta_{j}^{(m)}= \ep_{j}$. By the definition of $r_m$, if $j<r_m$ then $\theta_{j}^{(m)}= \ep_{j}=0$, while if $j=r_m$ then $j=c_{m+1}$ and $\theta_{c_{m+1}}^{(m)}= \ep_{c_{m+1}} \in \{-1,1\}$.

\item Next, assume that~$j= \sum_{i=1}^m \alpha_i c_i$ has exactly one representation from $\{c_1,c_2,\ldots, c_m\}$. Let $X= \{1\leq i \leq m\colon \alpha_i>0\}$. Then for each subset~$I$ of~$X$, it follows that $j-\sum_{i \in J} c_i$ also has exactly one representation from $\{c_1,c_2,\ldots, c_m\}$ and thus $\ep_{j-\sum_{i \in J} c_i}=1$ by the definition of~$s_m$. On the other hand, if $I$ is a subset of $\{1,\dots,m\}$ such that $I \not \subset X$, then $j-\sum_{i \in I} c_i$ has no representations from $\{c_1,c_2,\ldots, c_m\}$, and thus $\ep_{j-\sum_{i \in I}c_i}=0$ by the definition of~$r_m$. Equation~\eqref{eq:epjm} and the binomial theorem then imply that
$$
\theta_{j}^{(m)}= \sum_{I \subset X} (-1)^{\#I} \ep_{j-\sum_{i \in I}c_i}= \ep_{j}+\sum_{b=1}^{\#X} (-1)^b \binom{\#X}{b}= \ep_{j}-1.
$$
By the definition of $s_m$, if $j<s_m$ then $\theta_{j}^{(m)}= \ep_{j}-1=0$, while if $j=s_m$ then $j=c_{m+1}$ and $\theta_{c_{m+1}}^{(m)}= \ep_{c_{m+1}}-1 \in \{-2, -1\}$.

\item Finally, assume that~$j$ has $n_j\ge2$ representations from $\{c_1,c_2,\ldots, c_m\}$; by the definition of~$t_m$, we must have $j=t_m=c_{m+1}$. Write these representations as $j=\sum_{i=1}^m \alpha^{(h)}_i c_i$ for $1\leq h \leq n_j$. Let $X_h= \{1\leq i \leq m\colon \alpha_i^{(h)}>0\}$; we claim that the~$X_h$ are pairwise disjoint. Indeed, if we had $\alpha_i^{(h_1)}>0$ and $\alpha_i^{(h_2)}>0$ for some $1\le i\le m$ and $1\le h_1<h_2\le n_j$, then $j-c_i$ would have at least two representations from $\{c_1,c_2,\ldots, c_m\}$, violating the definition of~$t_m$.

Let~$I$ be a nonempty subset of $\{1,\dots,m\}$. Then $j-\sum_{i \in I} c_i$ has at most one representation from $\{c_1,c_2,\ldots, c_m\}$ by the definition of~$t_m$. If $I \subset X_{h}$ for some $1\leq h \leq n_j$, then $j-\sum_{i \in I} c_i>0$ does have a representation from $\{c_1,c_2,\ldots, c_m\}$, and thus $\ep_{j-\sum_{i \in I} c_i}=1$ by the definition of $s_m$ and the assumption that $t_m<s_m$. On the other hand, if $I \not\subset X_{h}$ for every $1\leq h \leq n_j$, then $j-\sum_{i \in I} c_i$ has no representations from $\{c_1,c_2,\ldots, c_m\}$ (for any such representation would induce an additional representation of~$j$ itself). Thus, it follows that $\ep_{j-\sum_{i \in I} c_i}=0$ by the definition of $r_m$ and the assumption that $t_m<r_m$.
Equation~\eqref{eq:epjm} and the binomial theorem then imply that
$$
\theta_{t_m}^{(m)}= \ep_{t_m}+\sum_{h=1}^{n_j} \sum_{\substack{I \subset X_{h}\\ I \neq \emptyset}} (-1)^{\#I} \ep_{t_m-\sum_{i \in I}c_i}= \ep_{t_m}+\sum_{h=1}^{n_j} \sum_{b=1}^{\#X_{h}} (-1)^b \binom{\#X_{h}}{b}= \ep_{t_m}-n_j\leq -1.
$$
\end{enumerate}
We have therefore achieved our subgoal of showing that $\theta_{j}^{(m)}=0$
for $j<c_{m+1}$ and that $\theta^{(m)}_{c_{m+1}}= \ep_{c_{m+1}}-n_{c_{m+1}}\ne0$. 
We now consider the sign of $\theta^{(m)}_{c_{m+1}}$:
\begin{itemize}
\item The only way that $\theta^{(m)}_{c_{m+1}}>0$ is when $\ep_{c_{m+1}}=1$ and $n_{c_{m+1}}=0$, and thus $\theta^{(m)}_{c_{m+1}}=1$. Therefore we have finished the proof for the induction step for $m+1$. At this point, the first {\bf if} statement of the algorithm appends $c_{m+1}$ to the list of principal indices, increases~$m$ by~$1$, and repeats the {\bf while} loop. 
\item On the other hand, $\theta^{(m)}_{c_{m+1}}<0$ means that $\ep_{c_{m+1}}<n_{c_{m+1}}$. In this event, the second {\bf if} statement sets $\ell=c_{m+1}$ and $m=M$ and terminates the algorithm.
\end{itemize}
These observations complete the verification of our main goal.
\smallskip

Note that for $\sigma>1$,
\begin{equation}\label{eq:TM1}
D_M(s) = \frac{D_f(s)}{\prod_{j=1}^{M} \zeta(c_j s)}= \prod_{p} \biggl( \sum_{j=0}^{\infty} \frac{\theta_j^{(M)}}{p^{js}} \biggr).    
\end{equation}
We have shown that $\theta_j^{(M)}=0$ for $1\leq j<\ell$ and that $\theta_\ell^{(M)}= \ep_\ell-n_\ell<0.$ Also, note that equation~\eqref{eq:epjm} implies that $|\theta_j^{(M)}|\leq 2^M$ for all~$j$. In particular, 
$$
\biggl|
\theta_{2\ell}^{(M)}-\frac{(\theta_{\ell}^{(M)})^2+\theta_{\ell}^{(M)}}{2} \biggr|\leq 2^M+\frac{4^M+2^M}{2}< 2^{2M+1}.
$$
Now we can apply Lemma~\ref{lem:factor} inductively $\ell+1$ times to $D_M(s)$ to obtain
\begin{equation}\label{eq:TM2}
D_M(s) = \prod_{j= \ell}^{2\ell-1} \zeta(js)^{\theta_j^{(M)}} \cdot \zeta(2\ell s)^{\theta_{2\ell}^{(M)}-\frac12((\theta_{\ell}^{(M)})^2+\theta_{\ell}^{(M)})} U_{2\ell}(s),
\end{equation}
where 
$$
U_{2\ell}(s) = \prod_{p} \biggl(1+\sum_{j=2\ell+1}^{\infty} \frac{\eta_j}{p^{js}} \biggr)
$$ 
with $|\eta_j|\leq 2^M (2j \cdot 2^{2M+1})^{4^{M+1} (\ell+1)}$. (Note that this bound can be made to depend upon~$\ell$ alone since $M<\ell$.) Combining equations~\eqref{eq:epjm}--\eqref{eq:TM2} establishes equation~\eqref{eq:factorkfull}, which completes the proof of the theorem.
\end{proof}

\subsection{Examples of applying Algorithm~\ref{alg}}

It will be illuminating to give several examples of fake~$\mu$'s where we see explicitly the zeta-factorization resulting from Algorithm~\ref{alg}. In some of these examples we will take note of the oscillation results implied by Theorem~\ref{thm:weak}, even though that theorem has not yet been proved; we assure the reader that these examples are merely for the purposes of illustration and will not be used when we prove Theorem~\ref{thm:weak} in Section~\ref{sec:omgea}.

First, we apply Theorem~\ref{thm:algorithm} to study the factorization of $D_f(s)$ for a powerfree-type fake~$\mu$.

\begin{prop}\label{prop:typeII}
Let $f \in \cF$ be of powerfree-type, and let~$k$ be the smallest positive integer such that $\ep_k \neq 1$.  Then the critical index of~$f$ equals~$k$. Moreover, for $\sigma>1$, we can write
$$
D_f(s) = \frac{\zeta(s)}{\zeta(ks)^{1+|\ep_k|}} \biggl( \prod_{j=k+1}^{2k-1} \zeta(js)^{\ep_j-\ep_{j-1}} \biggr) \zeta(2ks)^{\ep_{2k}-\ep_{2k-1}-|\ep_k|} U_{2k}(s),
$$    
where 
$$
U_{2k}(s) = \prod_{p} \biggl(1+\sum_{j=2k+1}^{\infty} \frac{\eta_j}{p^{js}} \biggr),
$$ 
has the property that there exist $A,B>0$ such that $|\eta_j|\leq (Aj)^{B}$ for all~$j$.  
\end{prop}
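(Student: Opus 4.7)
The plan is to deduce this proposition as a direct corollary of Theorem~\ref{thm:algorithm}. The only real work is to trace Algorithm~\ref{alg} on a powerfree-type fake~$\mu$, identify its output, and substitute into the general factorization~\eqref{eq:factorkfull}.

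Since $\ep_1=1$, the algorithm initializes with $c_1=1$ and $m=1$. The crucial observation is that every positive integer has a unique representation as a sum of copies of~$1$, so throughout the execution we have $n_j=1$. Consequently, the first \textbf{if} branch (which would append a new principal index when $n_j=0$) is never triggered, while the second \textbf{if} branch fires as soon as $n_j>\ep_j$, i.e.\ $\ep_j<1$. By the minimality of~$k$, this first happens at $j=k$, so Algorithm~\ref{alg} terminates returning $\ell=k$, $M=1$, $c_1=1$, and $n_\ell=1$; in particular the critical index of~$f$ equals~$k$.

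Substituting these values into equation~\eqref{eq:factorkfull} immediately gives
\[
D_f(s)=U_{2k}(s)\cdot\frac{\zeta(s)}{\zeta(ks)^{1-\ep_k}}\cdot\prod_{j=k+1}^{2k}\zeta(js)^{a_j}.
\]
Since $\ep_k\in\{-1,0\}$, we have $1-\ep_k=1+|\ep_k|$, matching the claimed denominator. For $k+1\le j\le 2k-1$, equation~\eqref{eq:a_j} collapses to $a_j=\ep_j-\ep_{j-1}$ because the only subsets of $\{1\}$ are $\emptyset$ and $\{1\}$. For $j=2k$, equation~\eqref{eq:a_j} yields
\[
a_{2k}=-\tfrac12\ep_k(\ep_k-1)+\ep_{2k}-\ep_{2k-1},
\]
and checking the two cases $\ep_k=0$ and $\ep_k=-1$ shows that $-\tfrac12\ep_k(\ep_k-1)=-|\ep_k|$, so $a_{2k}=\ep_{2k}-\ep_{2k-1}-|\ep_k|$ as required. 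Finally, the coefficient bound on $U_{2k}(s)$ is inherited directly from Theorem~\ref{thm:algorithm}.

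There is no serious obstacle here: the analytic and combinatorial heavy lifting has already been done inside Theorem~\ref{thm:algorithm}. The present proposition amounts to two routine verifications, namely the trace of Algorithm~\ref{alg} in the simple case $c_1=1$, and the arithmetic identity $-\tfrac12\ep_k(\ep_k-1)=-|\ep_k|$ valid for $\ep_k\in\{-1,0\}$.
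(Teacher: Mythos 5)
Your proof is correct and follows essentially the same route as the paper's: both trace Algorithm~\ref{alg} with $c_1=1$ to conclude $\ell=k$, $M=1$, $n_k=1$, then substitute into equation~\eqref{eq:factorkfull} and simplify the exponents via equation~\eqref{eq:a_j}, using that $\ep_k\in\{-1,0\}$ gives $-\tfrac12\ep_k(\ep_k-1)=-|\ep_k|$. The only cosmetic difference is that you note $n_j=1$ exactly (rather than merely $n_j\ge1$ as in the paper), which is a slightly sharper way of seeing that the first \textbf{if} branch never fires.
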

\begin{proof}
We follow the notation used in Algorithm~\ref{alg}. We have $c_1=1$ and thus $n_j \geq 1$ for all~$j$ since $j=1+1+\cdots+1$. Since $\ep_2= \cdots= \ep_{k-1}=1$ and $\ep_k<1=n_k$, we have $\ell=k$ and $M=1$. Also, note that $\ep_k \in \{-1,0\}$ implies that $-\frac12((\ep_k-1)^2+\ep_k-1)= \ep_k = -|\ep_k|$. The conclusion thus follows immediately from Theorem~\ref{thm:algorithm}.
\end{proof}

For powerfull-type fake~$\mu$'s, there is no general way to simplify the factorization of $D_f(s)$ obtained in Theorem~\ref{thm:algorithm}. For one thing, the initial index places no restriction at all on the critical index, as the following example shows.

\begin{ex}
Given integers $N>k\ge2$, suppose that
\[
\ep_j = \begin{cases}
1, &\text{if $j<N$ and $k\mid j$}, \\
0, &\text{if $j<N$ and $k\nmid j$}, \\
-1, &\text{if } j=N.
\end{cases}
\]
Then (regardless of the values of~$\ep_j$ for $j>N$) Algorithm~\ref{alg} terminates with $M=1$ and $c_1=k$ and $\ell=N$, so that the initial index is~$k$ and the critical index is~$N$.

In particular, for powerfull-type fake~$\mu$'s, it is possible for any index exceeding the initial index to be the critical index. (Proposition~\ref{prop:typeII} gives the same conclusion for powerfree-type fake~$\mu$'s. For M\"obius-type fake~$\mu$'s, the initial and critical indices always coincide by Proposition~\ref{prop:typeI}.)
\end{ex}

We can, however, give several examples of powerfull-type fake~$\mu$'s for which the critical index can be deduced.

\begin{ex}\label{ex:k+h}
Given integers $k>h\ge1$, suppose that
\[
\begin{cases}
\ep_j = 0, &\text{if } 1\le j\le k-1, \\
\ep_j = 1, &\text{if } j=k, \\
\ep_j \in \{0,1\}, &\text{if } k+1\le j\le k+h-1, \\
\ep_j = -1, &\text{if } j=k+h,
\end{cases}
\]
giving a powerfull-type fake~$\mu$ with initial index~$k$.
Algorithm~\ref{alg} reveals (regardless of the values of~$\ep_j$ for $j>k+h$) that the principal indices correspond to those~$k\le j\le k+h-1$ with $\ep_j=1$, and that the critical index equals~$k+h$; more precisely
\[
D_f(s) = \prod_{\substack{k\le j\le k+h-1 \\ \ep_j=1}} \zeta(js) \cdot\frac1{\zeta((k+h)s)} U_{k+h}(s)
\]
where the Dirichlet series coefficients of $U_{k+h}(s)$ are supported on $(k+h+1)$-free numbers.

Note that in this case, Theorems~\ref{thm:weak} and~\ref{thm:ubRH} imply that $E_f(x) = \Omega_{\pm}(x^{1/2(k+h)})$ and (under~RH) $E_f(x)\ll_\ep x^{1/2k+\ep}$. When~$h$ is small, these oscillation and upper bound results for $E_f(x)$ are close to each other. In particular, when $h\le\sqrt{2k}$, the oscillation result we obtain is better than the oscillation results mentioned in Example~\ref{ex:kfull} for the indicator function of the~$k$-full numbers themselves.
\end{ex}

\begin{ex}
Given integers $k'>k\ge2$ such that $k \nmid k'$, suppose that
\[
\ep_j = \begin{cases}
1, &\text{if $j = ak+bk'$ for some nonnegative integers $a$ and $b$}, \\
0, &\text{otherwise}.
\end{cases}
\]
Then the initial index is~$k$, and Algorithm~\ref{alg} yields the principal indices $\{k,k'\}$. Note that the only way to have $n_j>\ep_j$ in the second {\bf if} statement is for $n_j\ge2$, since by construction the number of representations of~$j$ from $\{k,k'\}$ is $n_j=0$ when $\ep_j=0$. Therefore the critical index of $(\ep_j)$ equals $\ell=\operatorname{lcm}[k,k']$, which is the smallest integer that can be written as a nonnegative integer combination of $\{k,k'\}$ in two different ways. In fact, since every integer of the form $ak+bk'$ can be written uniquely in this form with $0\le a<\frac\ell k$, in this case we have the exact zeta-factorization
\begin{align*}
D_f(s) &= \prod_p \biggl( 1+ \sum_{\substack{j\ge1 \\ j = ak+bk' \text{ for some } a,b\ge0}} \frac1{p^{js}} \biggr) \\
&= \prod_p \Biggl( \sum_{a=0}^{\ell/k-1} \sum_{b=0}^\infty \frac1{p^{(ak+bk')s}} \Biggr) = \prod_p \Biggl( \sum_{a=0}^{\ell/k-1} \frac1{p^{aks}} \Biggr)\prod_p \Biggl( \sum_{b=0}^\infty \frac1{p^{bk's}} \Biggr) = \frac{\zeta(ks)}{\zeta(\ell s)} \zeta(k's).
\end{align*}
One special case of this family is when $k=2$ and $k'=3$, so that $\ep_1=0$ while $\ep_j=1$ for all $j\ge2$, which recovers the factorization $\zeta(2s)\zeta(3s)/\zeta(6s)$ for the Dirichlet series corresponding to the indicator function of squarefull numbers.
\end{ex}

Our last example is an in-depth examination of the Dirichlet series corresponding to the indicator function of $k$-full numbers.
\begin{ex}\label{ex:kfullfactored}
Let $k \geq 3$, and let~$f$ be the indicator function of~$k$-full numbers, corresponding to the sequence with $\ep_j = 0$ when $j\le k-1$ and $\ep_j=1$ when $j\ge k$. Then~$D_f(s)$ has a zeta-factorization of the form
\begin{equation}\label{eq:kfullfactor}
D_f(s) = \sum_{n\text{ $k$-full}} n^{-s} = \prod_{j=k}^{2k-1} \zeta(js) \cdot \prod_{j=2k+2}^{4k+4} \zeta(js)^{a_j} \cdot U_{4k+4}(s),
\end{equation}
where the Dirichlet series coefficients of $U_{4k+4}$ are supported on $(4k+5)$-full numbers (a nearly equivalent statement appears as~\cite[Proposition 1(b)]{BRS88}, for instance).
We apply Theorem~\ref{thm:algorithm} to compute the exponents~$a_j$ explicitly. 

Following the notation used in Algorithm~\ref{alg}, observe that the principal indices are $(c_1,\ldots,c_k) = (k,k+1,\ldots,2k-1)$. Note that $2k=k+k$ and $2k+1=k+(k+1)$ have unique representations from $(c_1,\ldots,c_k)$, while $2k+2=k+(k+2) =2(k+1)$ has two representations from $(c_1,\ldots,c_k)$. Therefore the critical index equals $\ell=2k+2$, and $a_{2k}=\ep_{2k}-n_{2k}=1-1=0$ and similarly $a_{2k+1}=0$, while $a_{2k+2}=\ep_{2k+2}-n_{2k+2}=1-2=-1$. Note that the fraction in the second case of equation~\eqref{eq:a_j} equals~$0$, and so the formula for $a_{2\ell}=a_{4k+4}$ is the same sum as the formula for $a_{2k+3},\ldots,a_{4k+3}$.

For example, when $k=3$, the zeta-factorization from Theorem~\ref{thm:algorithm} turns out to be
$$
\sum_{n\text{ $3$-full}} n^{-s} = \frac{\zeta(3s)\zeta(4s)\zeta(5s)}{\zeta(8s)} \frac{\zeta(13s)\zeta(14s)}{\zeta(9s)\zeta(10s)} U_{16}(s),
$$
while when $k=4$, the zeta-factorization from Theorem~\ref{thm:algorithm} turns out to be
$$
\sum_{n\text{ $4$-full}} n^{-s} = \frac{\zeta(4 s)\zeta(5 s)\zeta(6 s)\zeta(7 s)}{\zeta(10 s)} \frac{\zeta(16 s)\zeta(17 s)^2\zeta(18 s)^2\zeta(19 s)^2\zeta(20s)}{\zeta(11 s)\zeta(12 s)^2\zeta(13 s)\zeta(14 s)} U_{20}(s).
$$
In both examples, the first fraction reflects the information in the previous paragraph, while the second fraction results from applying the formula in equation~\eqref{eq:a_j}.

Going forward, we assume $k \geq 5$. Recall that $\ep_j=1$ when $j \geq k$ or $j=0$, and $\ep_j=0$ otherwise; recall also that $c_i=k+i-1$ for $1\le i\le k$. To compute the exponents~$a_j$ in equation~\eqref{eq:kfullfactor} for $2k+3\leq j \leq 4k+4$, we apply equation~\eqref{eq:a_j}. which we can write in the form
\begin{equation}\label{eq:a_jformula}
a_j = \sum_{t=0}^k (-1)^t \bigl( S_t(j) + T_t(j) \bigr),    
\end{equation}
where
\begin{align*}
S_t(j) &= \#\bigg\{I \subset \{1,\ldots,k\},\, \#I=t\colon j-\sum_{i \in I} (k+i-1) \geq k\bigg\} \\
&= \#\bigg\{I \subset \{1,\ldots,k\},\, \#I=t\colon \sum_{i \in I} i \le j-t(k-1)-k \bigg\}, \\
T_t(j) &= \#\bigg\{I \subset \{1,\ldots,k\},\, \#I=t\colon j-\sum_{i \in I} (k+i-1) = 0 \bigg\} \\
&= \#\bigg\{I \subset \{1,\ldots,k\},\, \#I=t\colon \sum_{i \in I} i = j-t(k-1) \bigg\}.
\end{align*}
Note that we can restrict the sum in equation~\eqref{eq:a_jformula} to $0\le t\le 3$, since $j-t(k-1) \le (4k+4)-4(k-1) = 8$ when $t\ge4$ but the sum of any four distinct elements of $\{1,\ldots,k\}$ is at least~$10$.

Therefore computing $a_{2k+3},\ldots,a_{4k+4}$ reduces to finding, for $0\le t\le 3$, the number of $t$-element subsets of $\{1,\ldots,k\}$ whose sum equals or is bounded by particular numbers. This is an elementary but tedious problem, and we record only the results here, valid for all $k\ge5$ and all $2k+3\le j\le 4k+4$:
\begin{itemize}
\item $S_0(j)=1$ and $T_0(j)=0$;
\item $S_1(j) = \min\{k,j-2k+1\}$ and $T_1(j)=0$;
\item $\displaystyle S_2(j) = \begin{cases}
0, &\text{if } j\le 3k, \\
\lfloor\frac14(j-3k+1)^2\rfloor, &\text{if } 3k+1\le j\le4k-1, \\
\lfloor\frac14(j-3k+1)^2\rfloor - \binom{j-4k+2}2, &\text{if } j\ge4k;
\end{cases}$
\item $\displaystyle T_2(j) = \begin{cases}
\bigl\lfloor\frac12\bigl(k-|j-3k+1|\bigr)\bigr\rfloor, &\text{if } j\le4k-2, \\
0, &\text{if } j\ge4k-1;
\end{cases}$
\item $S_3(j) = \max\{0,j-4k-2\}$;
\item $\displaystyle T_3(j) = \begin{cases}
0, &\text{if } j\le3k, \\
[ \frac1{12}(j-3k)^2 ], &\text{if } 3k+1\le j\le4k, \\
[ \frac1{12}(j-3k)^2 ] - \lfloor\frac14(j-4k+1)^2\rfloor, &\text{if } j\ge4k+1.
\end{cases}$
\end{itemize}
For $T_3(j)$ we have used $[x]$ to denote rounding~$x$ to the nearest integer, in contrast to the greatest-integer function $\lfloor x\rfloor$ that also appears.

These formulas, together with equation~\eqref{eq:a_jformula}, allow for the full computation of the zeta-factor\-iz\-ation~\eqref{eq:kfullfactor} for the indicator function of the $k$-full numbers, for all $k\ge5$. For example, when $2k+3\le j\le 3k-1$ the formula~\eqref{eq:a_jformula} simplifies to $a_j = k-\lfloor\frac j2\rfloor$.
\end{ex}

\begin{rem} \label{interesting consequence}
The previous example has an interesting consequence for oscillations of the error term $E_f(x)$ of the counting function $F_f(x)$ for $k$-full numbers; let us restrict to $k\ge16$ for ease of exposition. In Example~\ref{ex:kfullfactored} we saw that the critical index of~$f$ equals $\ell=2k+2$, and thus Theorem~\ref{thm:weak} implies that $E_f(x) = \Omega_{\pm} (x^{1/(4k+4)})$. But in addition, Remark~\ref{tempting heuristic} described a tempting heuristic suggesting that these oscillations might be essentially best possible. We are now in a position to show that this heuristic does not hold here.

We computed in Example~\ref{ex:kfullfactored} that $a_{2k}=a_{2k+1}=0$ and $a_{2k+2}=-1$, and that $a_j=k-\lfloor \frac j2 \rfloor<0$ for $2k+3\leq j \leq 3k-1$. In particular, in view of equation~\eqref{eq:kfullfactor}, the Dirichlet series $D_f(s)$ is analytic at $\frac{1}{j}$ for all $2k \leq j \leq 3k-1$. Therefore the expression~\eqref{eq:G} for the main term $G_f(x)$ of the counting function $F_f(x)$ can be written as
\begin{align*}
G_f(x) &= \sum_{j=k}^{2k-1} \Res\biggl(D_f(s) \frac{x^s}{s}, \frac{1}{j} \biggr)+\sum_{j=3k}^{4k+4} \Res\biggl(D_f(s) \frac{x^s}{s}, \frac{1}{j} \biggr) = \sum_{j=k}^{2k-1} \Res\biggl(D_f(s) \frac{x^s}{s}, \frac{1}{j} \biggr) + O(x^{1/(3k-1)}).
\end{align*}
On the other hand, we saw in Example~\ref{ex:kfull} that
$$
F_f(x) = \sum_{j=k}^{2k-1} \Res\biggl(D_f(s) \frac{x^s}{s}, \frac{1}{j} \biggr)+\Omega\bigl(x^{1/(2k+\sqrt{8k}+3)} \bigr).
$$
Since $2k+\sqrt{8k}+3 < 3k-1$ when $k\ge16$, we conclude that $E_f(x) = F_f(x) - G_f(x) = \Omega(x^{1/(2k+\sqrt{8k}+3)})$, which is rather larger than the $x^{1/(4k+4)}$ suggested by the heuristic.

In summary, for a general $f \in \cF$, the heuristic prediction $E_f(x)\ll_\ep x^{1/2\ell+\ep}$ can be far away from the truth, and it is not immediately clear what the true order of magnitude of $E_f(x)$ should be in general.
\end{rem}

\section{Oscillation results for $E_f(x)$}\label{sec:omgea}

In this section, we prove a general oscillation result for the error term $E_f(x)$ of the summatory function $F_f(x)$ of a fake~$\mu$, based on the analytic properties of its Dirichlet series $D_f(s)$. More precisely, we state Theorem~\ref{thm:general} in Section~\ref{sec:strategy} and prove it in Section~\ref{sec:genosc}. From that theorem we then deduce Theorems~\ref{thm:mainmu} and~\ref{thm:mainkfree}, which apply to M\"obius-type and powerfree-type fake~$\mu$'s, respectively, as well as giving a general result (Theorem~\ref{thm:main_full}) for powerfull-type fake~$\mu$'s. Together these three results imply Theorem~\ref{thm:weak}.

\subsection{A motivating example and a general strategy} \label{sec:strategy}

As an illustration, we begin with the family of fake~$\mu$'s discussed in \cite[Theorem~3]{MMT23}, both to fill a gap in the original proof by Mossinghoff, Trudgian, and the first author, and to motivate our approach for the remainder of this section.

\begin{ex}\label{rem:gap}
Let $f \in \cF$ with $\ep_1=-1$ and $\ep_2=1$.
This family of M\"obius-type fake~$\mu$'s was studied in~\cite{MMT23}: it was shown there that $D_f(s) = {U_2(s)\zeta(2s)}/{\zeta(s)}$, where
$$
\quad U_2(s) = \prod_p \widetilde{C}_p(s) \quad\text{with}\quad
\widetilde{C}_p(s) =1+\sum_{j \geq 3} \frac{\ep_{j-1}+\ep_j}{p^{js}}.
$$
(Our Proposition~\ref{prop:typeI} contains this expression as a special case, other than the exact expression for~$\widetilde{C}_p(s)$ which, in this case, is easy to work out from the given zeta-factorization.)
Their result~\cite[Theorem~3]{MMT23}, translated into our notation, is that $E_f(x) = \Omega_{\pm}(\sqrt{x})$. We point out two small mistakes in their proof, partially to show how those gaps can be filled, and partially because describing the proof will help us motivate the approach we take in this section for general fake~$\mu$'s.

From an integral representation for $D_f(s) - c\zeta(2s)$ for real constants~$c$, they use a classical method of Landau to argue that if $E_f(x)$ is eventually of one sign then $D_f(s)$ must be analytic for $\sigma>\frac12$. Normally this would imply the Riemann hypothesis, as is stated in the first line of~\cite[page~3242]{MMT23}; but in this case, the fact that ${U_2(s)\zeta(2s)}/{\zeta(s)}$ is analytic for $\sigma>\frac12$ implies the slightly weaker statement that $\zeta(s)$ cannot have any zeros off the critical line {\em except possibly} at zeros of $U_2(s)$. While the authors of~\cite{MMT23} did not directly use the assertion of RH in their proof, this detail helps us see why it is important to compare the zeros of the various terms in a partial zeta-factorization (as we will do explicitly in Proposition~\ref{prop:nonzero} below).

The proof of~\cite[Theorem~3]{MMT23} is then carried out using the assertion that $U_2(\rho_1)\ne0$, where $\rho_1= \frac{1}{2}+i\gamma_1 \approx \frac12+i\cdot14.135$ is the lowest zero of $\zeta(s)$ in the upper half-plane. The authors claim that $U_2(\rho_1)\ne0$ follows from the fact that $U_2(s)$ is absolutely convergent for $\sigma>\frac13$, so that $1/U_2(s)$ is analytic there. However, this argument actually shows that $U_2(\rho_1)\ne0$ {\em unless} one of the individual Euler factors $\widetilde{C}_p(\rho_1)$ vanishes. (Indeed, the vanishing of an individual Euler factor is a significant contributor to~\cite[Theorem~1(iii)]{MMT23}.) It is impossible for any of these factors with $p\ge3$ to vanish, since by the triangle inequality
$$
\bigl| \widetilde{C}_p(\rho_1) \bigr| \geq 1-\sum_{j \geq 3} \frac{2}{p^{j/2}}=1-\frac{2}{p^{3/2}(1-p^{-1/2})} \ge 1-\frac{2}{3\sqrt3-3} > 0
$$
(this argument appears at the top of~\cite[page~3238]{MMT23}). We now show that $\widetilde{C}_2(\rho_1)\ne0$ to fill this small gap in their proof.

If we look at that Euler factor
$$
\widetilde{C}_2(\rho_1) =1 + \sum_{j\geq 3} \frac{\ep_{j-1} + \ep_j}{2^{j\rho_1}}
= 1 + \frac{1 + \ep_3}{2^{3\rho_1}} + \sum_{j=4}^6 \frac{\ep_{j-1} + \ep_j}{2^{j\rho_1}} + \sum_{j=7}^\infty \frac{\ep_{j-1} + \ep_j}{2^{j\rho_1}},
$$
then another triangle-inequality argument gives
$$
\bigl| \widetilde{C}_2(\rho_1) \bigl| \ge \biggl| 1 + \frac{1 + \ep_3}{2^{3\rho_1}} + \sum_{j=4}^6 \frac{\ep_{j-1} + \ep_j}{2^{j\rho_1}} \biggr| - \sum_{j=7}^\infty \frac2{2^{j/2}} = \biggl| 1 + \frac{1 + \ep_3}{2^{3\rho_1}} + \sum_{j=4}^6 \frac{\ep_{j-1} + \ep_j}{2^{j\rho_1}} \biggr| - \frac{1}{4\sqrt2-4}.
$$
We can split into~$81$ cases based on the values $\ep_3,\ldots,\ep_6\in\{-1,0,1\}$; in each case we directly compute the expression on the right-hand side, and in all~$81$ cases the quantity happens to exceed $0.055$. These inequalities prove that the Euler factor $\widetilde{C}_2(s)$ cannot vanish at $s=\rho_1$, which finishes the justification that $U_2(\rho_1) \neq 0$. 
\end{ex}

In Example~\ref{rem:gap}, the critical index was $\ell=1$, meaning that a power of $\zeta(s)$ was present in the denominator of our zeta-factorization; the fact that we were considering a zero of $\zeta(s)$ on the critical line is the reason why the geometric series we studied had common ratios whose size was~$p^{-1/2}$. For general critical indices~$\ell$, the corresponding series evaluated at a zero of $\zeta(\ell s)$ would have common ratios of size $p^{-1/2\ell}$ and would thus converge more slowly. In principle, we could try to extend the above triangle-inequality arguments, although at the very least we would have to consider more primes in addition to $p=2$ and more terms in each series; but such an extension would work only for one fixed~$\ell$ at a time in any case.

Instead, we exploit the fact that the specific zero~$\rho_1$ is not important to us---we can use any convenient zero of $\zeta(s)$, indeed without even needing to know its exact identity. We therefore adopt a suitably specified version of this strategy in Proposition~\ref{prop:nonzero} below. To facilitate precise statements of our results, we introduce some notation that will be in force throughout Section~\ref{sec:omgea}. We precede that notation with a lemma that will be familiar to analytic number theorists.

\begin{lem} \label{residue lemma}
Fix $\sigma_0>0$, and let $h(s)$ be a meromorphic function with a pole of order~$\xi$ at~$\sigma_0$. Then for any real number $x>1$, the residue of $h(s)x^s/s$ at $s=\sigma_0$ equals $P(\log x) x^{\sigma_0}$ for some polynomial~$P(t)$ (depending on~$h(s)$) whose degree is~$\xi-1$.
\end{lem}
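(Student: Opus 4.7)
The plan is to directly compute the residue by combining the Laurent expansion of $h(s)$ around $s_0$ with Taylor expansions of the entire factor $x^s/s$ around the same point, and then to read off the resulting polynomial in $\log x$. Note that since the residue is taken at $s=\sigma_0$, we treat $s_0 = \sigma_0 > 0$ throughout (so $x^s/s$ is analytic at $s_0$ and the pole of $h(s)x^s/s$ at $s_0$ is of order exactly $\xi$).

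First I would write the Laurent expansion
\[
h(s) = \sum_{k=-\xi}^{\infty} c_{k}(s-\sigma_0)^k, \qquad c_{-\xi}\neq 0,
\]
valid in a punctured neighborhood of $\sigma_0$. Next I would expand the analytic factor $x^s/s$ about $\sigma_0$. Writing $x^s = x^{\sigma_0} e^{(s-\sigma_0)\log x}$ gives the Taylor expansion
\[
x^s = x^{\sigma_0}\sum_{n=0}^{\infty}\frac{(\log x)^n}{n!}(s-\sigma_0)^n,
\]
and since $\sigma_0 > 0$ the geometric expansion
\[
\frac{1}{s} = \frac{1}{\sigma_0}\sum_{m=0}^{\infty}\Bigl(-\frac{s-\sigma_0}{\sigma_0}\Bigr)^m
\]
is valid for $|s-\sigma_0|<\sigma_0$. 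Multiplying these two series gives an expansion of $x^s/s$ of the form $x^{\sigma_0}\sum_{j=0}^{\infty} \alpha_j(\log x)\,(s-\sigma_0)^j$, where each $\alpha_j(\log x)$ is a polynomial in $\log x$ of degree exactly $j$ with leading coefficient $\tfrac{1}{\sigma_0 \, j!}$ (the other contributions to $\alpha_j$ have strictly lower degree in $\log x$ because they use at least one factor from the $1/s$ expansion).

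The residue at $s_0$ is the coefficient of $(s-\sigma_0)^{-1}$ in the product of these two series, which gives
\[
\Res\!\left(h(s)\tfrac{x^s}{s},\sigma_0\right) = x^{\sigma_0}\sum_{k=1}^{\xi} c_{-k}\,\alpha_{k-1}(\log x) =: x^{\sigma_0} P(\log x).
\]
This is manifestly a polynomial in $\log x$ times $x^{\sigma_0}$. The maximum possible degree is $\xi-1$, achieved only by the $k=\xi$ term; the leading coefficient equals $c_{-\xi}/(\sigma_0(\xi-1)!)$, which is nonzero because $c_{-\xi}\neq 0$ and $\sigma_0 > 0$. Therefore $\deg P = \xi-1$.

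There is no real obstacle here: the only place one needs to be careful is in verifying that the leading coefficient does not accidentally cancel. That point is secured by the two facts used above, namely that $\sigma_0>0$ (so the expansion of $1/s$ has a nonzero constant term) and that $h$ has a pole of order exactly $\xi$ (so $c_{-\xi}\neq 0$). Both are hypotheses of the lemma, and nothing further is needed.
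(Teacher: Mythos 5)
Your proof is correct and takes essentially the same route as the paper: Laurent expansion of the singular factor, Taylor expansion of the analytic factor, multiply the series, and read off the coefficient of $(s-\sigma_0)^{-1}$. The only cosmetic difference is that the paper takes the Laurent expansion of $h(s)/s$ as a single meromorphic function (still a pole of order~$\xi$, since $1/s$ is analytic and nonzero at $\sigma_0>0$) and multiplies by the exponential series alone, whereas you keep $h(s)$, $x^s$, and $1/s$ as three separate series. Your extra step of tracking the leading coefficient $c_{-\xi}/(\sigma_0(\xi-1)!)$ is a correct and slightly more explicit justification that $\deg P = \xi-1$ exactly; the paper secures the same point by noting that the leading Laurent coefficient $c_{-\xi}$ of $h(s)/s$ is nonzero.
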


\noindent
Note that the second sum in Theorem~\ref{thm:mainkfree} contains expressions of exactly this type coming from poles of order~$\xi=2$.

\begin{proof}
Since $h(s)/s$ is also meromorphic with a pole of order~$\xi$ at~$\sigma_0$, we can write
\begin{align*}
\frac{h(s)}s = \sum_{k=-\xi}^\infty c_k(s-\sigma_0)^k
\quad\text{and}\quad
x^s = x^{\sigma_0} e^{(s-\sigma_0)\log x} = x^{\sigma_0} \sum_{k=0}^\infty \frac{(s-\sigma_0)^k}{k!}(\log x)^k
\end{align*}
for some constants $c_k$ with $c_{-\xi}\ne0$. When we multiply these series together, the coefficient of $(s-\sigma_0)^{-1}$ depends on the first~$\xi$ terms in each series; more precisely, the residue we seek equals
\[
x^{\sigma_0} \sum_{j=0}^{\xi-1} \frac{(\log x)^j}{j!} c_{-1-j} = P(\log x)x^{\sigma_0}
\quad\text{where}\quad
P(t) = \sum_{j=0}^{\xi-1} \frac{c_{-1-j}}{j!} t^j.
\qedhere
\]
\end{proof}

\begin{notn} \label{sec 3 notation}
Let $f\in \cF$ be defined via the sequence $(\ep_j)$. Let $\ell$ be the critical index of~$f$. By Proposition~\ref{prop:typeI} and Theorem~\ref{thm:algorithm}, there are integers $a_1, a_2, \ldots, a_{2\ell}$ and a function $U_{2\ell}(s)$ such that the following conditions hold:
\begin{itemize}
\item We have $a_j\geq 0$ for $1\leq j\le\ell-1$, while $a_\ell<0$;
\item For $\sigma>1$, we can write
\begin{equation}\label{eq:factorization}
D_f(s) =U_{2\ell}(s) \cdot \prod_{j=1}^{2\ell} \zeta(js)^{a_j}
\quad{where }\quad
U_{2\ell}(s) = \prod_{p} \biggl(1+\sum_{j=2\ell+1}^{\infty} \frac{\eta_j}{p^{js}} \biggr);
\end{equation}
\item There are positive constants $A,B$, depending only on~$\ell$, such that $|\eta_j|\leq (Aj)^B$ for all~$j$; in particular, $U_{2\ell}(s)$ is analytic for $\sigma>\frac1{2\ell+1}$ by Lemma~\ref{really does converge lemma}, and is nonzero provided each of its factors is nonzero.
\item Given equation~\eqref{eq:factorization}, in the region $\sigma>\frac{1}{2\ell+1}$, the only possible real poles of $D_f(s)$ are at $s=1, \frac{1}{2}, \ldots, \frac{1}{2\ell}$. Let $\xi_j$ be the order of the pole of $D_f(s)$ at $s=\frac1j$, so that $\xi_j=0$ if $a_j\le0$ and $0\le \xi_j\le a_j$ if $a_j\ge1$.
\item By Lemma~\ref{residue lemma}, equation~\eqref{eq:G} becomes
\[
G_{f}(x) = \sum_{j=1}^{2\ell} \Res\biggl(D_f(s) \frac{x^s}{s}, \frac{1}{j} \biggr) = \sum_{\substack{ 1\leq j \leq 2\ell \\ \xi_j \geq 1}} P_j(\log x) x^{1/j},
\]
where each $P_j(t)$ is a polynomial of degree $\xi_j-1$.
\item As always, we have $F_f(x) = \sum_{n\le x} f(n)$ and $E_f(x) = F_f(x) - G_f(x)$.
\end{itemize}
\end{notn}

We are now in a position to state an important proposition, which asserts that a zero of $\zeta(s)$ exists at which other zeta-factorization factors do not vanish, as motivated by the above discussion. Establishing this proposition is the goal of Section~\ref{nonvanish section}.

\begin{prop}\label{prop:nonzero}
In the situation described by Notation~\ref{sec 3 notation}, there exists a zero~$\rho$ of $\zeta(s)$ with $\Re(\rho) \geq \frac{1}{2}$ such that
\begin{equation}\label{nonzero}
U_{2\ell}\biggl( \frac{\rho}{\ell} \biggr) \cdot \prod_{\substack{1\leq j \leq 2\ell\\j \neq \ell}}\zeta\biggl( \frac{j\rho}{\ell} \biggr)\neq 0.
\end{equation}
\end{prop}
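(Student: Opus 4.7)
The plan is to find the required zero~$\rho$ of $\zeta(s)$ on the critical line $\Re s=\tfrac12$ by showing that the set of ``bad'' zeros (for which one of the factors in~\eqref{nonzero} vanishes) has strictly smaller density than the set of all critical-line zeros of~$\zeta$. First, when $j=2\ell$ we have $\Re(2\rho)\ge 1$, so $\zeta(2\rho)\ne 0$ automatically. For any $\rho=\tfrac12+i\gamma$ on the critical line and any remaining $j\in\{1,\dots,2\ell-1\}\setminus\{\ell\}$, the point $j\rho/\ell$ has real part $j/(2\ell)\in(0,1)$ distinct from $\tfrac12$; hence $\zeta(j\rho/\ell)=0$ would force $\zeta$ to vanish off the critical line at real part $j/(2\ell)$ and height at most $jT/\ell\le 2T$. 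Using the functional equation to replace $j/(2\ell)$ by $1-j/(2\ell)$ when $j<\ell$, the relevant real part is $\sigma_j=\max(j/(2\ell),\,1-j/(2\ell))>\tfrac12$. Ingham's zero-density estimate $N(\sigma,T)\ll T^{3(1-\sigma)/(2-\sigma)}(\log T)^5$ then shows that the total number of critical-line zeros $\rho$ with $0<\Im\rho\le T$ for which some $\zeta(j\rho/\ell)$ vanishes is $O(T^{1-\delta})$ for some $\delta=\delta(\ell)>0$.

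Next I would control the zeros of $U_{2\ell}$ on the line $\Re s=1/(2\ell)$. Writing $U_{2\ell}(s)=\prod_p E_p(s)$ with $E_p(s)=1+\sum_{j\ge 2\ell+1}\eta_j\,p^{-js}$ and recalling from Notation~\ref{sec 3 notation} that $|\eta_j|\le(Aj)^B$, one checks that $|E_p(s)-1|\le\tfrac12$ throughout $\Re s\ge 1/(2\ell)$ for every prime $p\ge P_0(\ell)$, so only the finitely many primes $p<P_0$ can contribute zeros in that half-plane. For each such prime, $E_p(s)$ is analytic in $z=p^{-s}$ with $E_p(0)=1$, hence has only finitely many zeros in the closed disk $|z|\le p^{-1/(2\ell)}$; each such zero lifts to an arithmetic progression of zeros of $E_p$ in the $s$-plane with vertical spacing $2\pi/\log p$. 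Consequently the number of zeros of $U_{2\ell}$ on the vertical segment $\{s:\Re s=1/(2\ell),\ |\Im s|\le T/\ell\}$ is $O(T)$, and hence so is the number of critical-line zeros $\rho$ with $|\Im\rho|\le T$ and $U_{2\ell}(\rho/\ell)=0$.

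Combining these two counts, the total number of bad~$\rho$ on the critical line with $0<\Im\rho\le T$ is $O(T)$. Since Selberg's positive-proportion theorem guarantees at least $cT\log T$ critical-line zeros of~$\zeta$ in the same range, for $T$ sufficiently large the good zeros strictly outnumber the bad ones, producing the required~$\rho$. The main technical subtlety I anticipate is controlling the zeros of $U_{2\ell}$, since a priori they could accumulate on the exact line $\Re s=1/(2\ell)$; it is the explicit Euler-product description of $U_{2\ell}$ that lets us reduce this to counting zeros of finitely many power series in $p^{-s}$ and thereby obtain the needed linear bound.
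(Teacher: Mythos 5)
Your proposal is correct, and it takes a genuinely different route from the paper. The paper avoids talking about the set of zeros of $U_{2\ell}$ at all: after a preliminary reduction (their Lemma~\ref{lem:deduction}) showing that $U_{2\ell}(\rho/\ell)\neq 0$ is equivalent to $\prod_{p\le P}C_p(\rho/\ell)\neq 0$ for a fixed finite $P=P(\ell)$, it uses the Landau--Gonek formula to compute the \emph{average} of $\prod_{p\le P}C_p(\rho/\ell)C_p((1-\rho)/\ell)$ over the restricted zero set $Z_\ell(T)$ (defined using Bohr--Landau density so that the $\zeta(j\rho/\ell)$ factors are automatically nonzero). The diagonal term of that average is $\ge f(1)^2=1$ times $N(T)+O(T)$, while the off-diagonal terms contribute $O(T)$ by Landau's formula, so the average is large and hence some zero makes the product nonzero. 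You instead run a pure union-bound: you restrict to critical-line zeros (invoking Selberg's $N_0(T)\gg T\log T$), dispose of the $\zeta(j\rho/\ell)$ factors with a zero-density estimate exactly as the paper's Lemma~\ref{lem:Zl} does, and then --- this is the genuinely new ingredient --- count the zeros of $U_{2\ell}$ on the line $\Re s=1/(2\ell)$ directly. Your observation that (i) $E_p$ has no zeros in $\Re s\ge 1/(2\ell)$ once $p$ is large (same tail estimate as the paper's Lemma~\ref{lem:deduction}), and (ii) for each of the remaining finitely many primes, $E_p$ viewed as a power series in $z=p^{-s}$ has finitely many zeros in the closed disk $|z|\le p^{-1/(2\ell)}$ (strictly inside the disk of convergence $|z|<1$, by the bound $|\eta_j|\le(Aj)^B$), each lifting to an arithmetic progression in $\Im s$ of density $\log p/2\pi$, gives the $O(T)$ bound you need, with no Landau--Gonek input. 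The trade-off is that you import Selberg's positive-proportion theorem, which is a heavier tool than the Bohr--Landau density used in the paper (the paper's approach gets a $T\log T$ count for the good set without ever needing any critical-line zeros to exist), but you avoid the averaging machinery entirely, and your argument makes visible exactly why $U_{2\ell}$ cannot spoil too many zeros, which the paper's proof leaves implicit behind the reduction to finitely many Euler factors.
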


That proposition will allow us to prove the following general oscillation theorem in Section~\ref{sec:genosc}:

\begin{thm}\label{thm:general}
In the situation described by Notation~\ref{sec 3 notation},
$$E_f(x) = \Omega_{\pm} \bigl(x^{1/2\ell}(\log x)^{|a_\ell|-1} \bigr).$$
\end{thm}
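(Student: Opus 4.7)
The plan is to apply a Landau-style positivity argument to the Mellin transform of $E_f(x)$, leveraging Proposition~\ref{prop:nonzero} to produce a non-real pole on the line $\Re(s)=1/(2\ell)$ whose presence is incompatible with any one-sided bound of the shape claimed in the theorem.

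First I would establish that the Mellin transform $\mathcal{E}(s):=\int_1^\infty E_f(x)x^{-s-1}\,dx$, initially convergent for $\sigma>1$, extends to a meromorphic function on $\sigma>1/(2\ell+1)$ with \emph{no real singularities} in this region. Writing $\mathcal{E}(s)=D_f(s)/s-\int_1^\infty G_f(x)x^{-s-1}\,dx$, Lemma~\ref{residue lemma} together with a direct computation of $\int_1^\infty(\log x)^k x^{1/j-s-1}\,dx$ shows that the $G_f$-integral extends to a meromorphic function on $\C$ whose principal parts at $s=1/j$ exactly cancel the corresponding real poles of $D_f(s)/s$. Using the factorization~\eqref{eq:factorization}, the remaining singularities of $\mathcal{E}(s)$ in $\sigma>1/(2\ell+1)$ come from zeros of $\zeta(js)$ for indices $j$ with $a_j<0$, and since the nontrivial zeros of $\zeta$ are non-real, these singularities are all non-real. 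Taking the zero $\rho$ from Proposition~\ref{prop:nonzero} and setting $s_0:=\rho/\ell$, the non-vanishing condition~\eqref{nonzero} guarantees that $\mathcal{E}(s)$ has a pole of order at least $|a_\ell|$ at $s_0$, with $\Re(s_0)\geq 1/(2\ell)$ and $\Im(s_0)\neq 0$.

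Next I would prove $\Omega_+$ by contradiction: assume $E_f(x)\leq C x^{1/(2\ell)}(\log x)^{|a_\ell|-1}$ for all $x\geq X_0$, with $C>0$ to be chosen small. Setting $h(x):=Cx^{1/(2\ell)}(\log x)^{|a_\ell|-1}-E_f(x)\geq0$ on $[X_0,\infty)$, its Mellin transform $\tilde H(s):=\int_{X_0}^\infty h(x)x^{-s-1}\,dx$ satisfies, via the substitution $u=\log x$,
\[
\tilde H(s)=\frac{C(|a_\ell|-1)!}{(s-1/(2\ell))^{|a_\ell|}}-\mathcal{E}(s)+E_0(s)
\]
for $\sigma>1$, where $E_0(s)$ is entire (a correction from integrating on $[1,X_0]$). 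Thus $\tilde H$ extends meromorphically to $\sigma>1/(2\ell+1)$ with a real pole of order $|a_\ell|$ at $s=1/(2\ell)$ and a non-real pole of order $\geq|a_\ell|$ at $s_0$. Since $h\geq0$, Landau's theorem forces the abscissa of convergence $\sigma_c$ of $\tilde H$ to be a real singularity of $\tilde H$; from the list above we conclude $\sigma_c=1/(2\ell)$ (the alternative $\sigma_c\leq1/(2\ell+1)$ would make $\tilde H$ analytic at $s_0$, contradicting the pole there). The positivity inequality $|\tilde H(\sigma+it)|\leq\tilde H(\sigma)$ then holds for all $\sigma>1/(2\ell)$ and all real $t$. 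Specializing $t=\Im(\rho)/\ell$ and letting $\sigma\to(1/(2\ell))^+$, the left side blows up like $|b|(\sigma-1/(2\ell))^{-|a_\ell|}$, where $b\neq 0$ is the leading Laurent coefficient of $\mathcal{E}$ at $s_0$, while the right side is asymptotic to $C(|a_\ell|-1)!(\sigma-1/(2\ell))^{-|a_\ell|}$ (note that $\mathcal{E}$ is analytic, hence bounded, near the real point $1/(2\ell)$, while the $C$-term evaluated at $\sigma+i\Im(\rho)/\ell$ stays bounded). Comparing leading coefficients forces $C\geq|b|/(|a_\ell|-1)!$, a contradiction whenever $C$ is chosen strictly smaller. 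The $\Omega_-$ half follows symmetrically via $\tilde h(x):=E_f(x)+Cx^{1/(2\ell)}(\log x)^{|a_\ell|-1}\geq0$, whose Mellin transform has the opposite sign in front of $\mathcal{E}(s)$.

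The main obstacle is ensuring that the pole at $s_0$ really has order at least $|a_\ell|$ with nonzero leading Laurent coefficient $b$: this requires both that the factor $\zeta(\ell s)^{a_\ell}$ contributes the advertised pole at $s_0$ and that no other factor in the zeta-factorization~\eqref{eq:factorization} produces a compensating vanishing or blow-up, and that the $G_f$-integral (whose poles lie only at the real points $1/j$) introduces no cancellation at the non-real point $s_0$. All of these points are precisely what Proposition~\ref{prop:nonzero} is engineered to guarantee.
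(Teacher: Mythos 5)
Your proof is correct and takes essentially the same approach as the paper's: form the Mellin transform of $E_f(x)$ shifted by a comparison term $r\,x^{1/(2\ell)}(\log x)^{|a_\ell|-1}$, invoke Landau's theorem to extend analyticity to $\sigma>1/(2\ell)$, and compare the blow-up at the non-real pole $\rho/\ell$ (whose existence and non-cancellation is exactly what Proposition~\ref{prop:nonzero} supplies) against the real pole at $s=1/(2\ell)$ via the positivity inequality for the Mellin transform of a non-negative function. The only substantive difference is that the paper works with $\widetilde{D}(s)=s\int_1^\infty E_f(x)x^{-s-1}\,dx$ (hence the slightly more elaborate inequality $|H(\sigma+i\gamma')|\le 2\ell|\rho|\,H(\sigma)$) and explicitly tracks the multiplicity $m$ of $\rho$ as a zero of $\zeta$, concluding $m=1$ along the way, whereas your phrasing tacitly treats the pole order at $\rho/\ell$ as exactly $|a_\ell|$; this is harmless, since a zero of higher multiplicity only makes the left side blow up faster and the contradiction more immediate.
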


\subsection{Finding a zero where other factors do not vanish} \label{nonvanish section}

This section is devoted to the proof of Proposition~\ref{prop:nonzero}.
We begin by recording some consequences of classical zero-counting functions for $\zeta(s)$ and the Landau--Gonek formula.

\begin{lem}\label{lem:Zl}
Given $\ell\in\N$ and $T\ge3$, define
\begin{multline*}
Z_\ell(T) = \bigg\{\rho= \beta+i\gamma\colon \zeta(\rho) =0,\, 0< \gamma \leq T,\, \frac{\ell}{2\ell+1}<\beta<\frac{\ell+1}{2\ell+1}, \\
\prod_{\substack{1\leq j \leq 2\ell\\j \neq \ell}}\zeta\biggl( \frac{j\rho}{\ell} \biggr) \zeta\biggl( \frac{j(1-\rho)}{\ell} \biggr)\neq 0\bigg\}.
\end{multline*}
Then $\#Z_\ell(T) =\frac{T}{2\pi} \log T+O_\ell(T)$.
\end{lem}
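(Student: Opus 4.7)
The plan is to recover $\#Z_\ell(T)$ from the Riemann--von Mangoldt formula $N(T)=\frac{T}{2\pi}\log T+O(T)$ by showing that each of the two restrictions cutting $Z_\ell(T)$ out of the full set of nontrivial zeros discards at most $O_\ell(T)$ zeros---so the claimed $T\log T+O_\ell(T)$ is really just the main term of $N(T)$ with every excluded zero absorbed into the error. Both restrictions will be controlled by a single off-line zero-density estimate such as Ingham's $N(\sigma,T)\ll T^{3(1-\sigma)/(2-\sigma)}\log^5 T$.

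For the strip condition, the functional equation identifies $\{\rho:\beta\le\tfrac{\ell}{2\ell+1}\}$ with $\{1-\rho:\beta\ge\tfrac{\ell+1}{2\ell+1}\}$, so the excluded zeros are counted (up to a factor of $2$) by $N(\tfrac{\ell+1}{2\ell+1},T)$. Since $\tfrac{\ell+1}{2\ell+1}$ exceeds $\tfrac12$ by $\tfrac1{2(2\ell+1)}$, Ingham's estimate yields $N(\tfrac{\ell+1}{2\ell+1},T)\ll_\ell T^{3\ell/(3\ell+1)}\log^5 T=O_\ell(T)$, which is the target size.

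For the non-vanishing condition, fix $j\in\{1,\dots,2\ell\}\setminus\{\ell\}$ and suppose a zero $\rho$ in the strip satisfies $\zeta(j\rho/\ell)=0$. Setting $\rho'=j\rho/\ell$ produces a zeta zero with $0<\Im\rho'\le 2T$ and $\Re\rho'=\tfrac{j}{\ell}\Re\rho$ lying in the interval $\bigl(\tfrac{j}{2\ell+1},\tfrac{j(\ell+1)}{\ell(2\ell+1)}\bigr)$. This interval is centred at $\tfrac{j}{2\ell}\ne\tfrac12$ and has length $\tfrac{j}{\ell(2\ell+1)}$, so---after applying the functional equation if $j<\ell$ to move $\rho'$ to the right of the critical line---its real part is bounded below by $\tfrac12+\delta_\ell$ for some $\delta_\ell>0$ depending only on $\ell$. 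Ingham's bound then gives at most $O_\ell(T^{c_{\ell,j}}\log^5 T)$ such $\rho'$, hence the same bound on such $\rho$. An identical argument applied to $1-\rho$ (which is also a zeta zero in the strip) bounds the number of $\rho$ with $\zeta(j(1-\rho)/\ell)=0$. Summing over the finitely many indices $j\ne\ell$ preserves the $O_\ell(T^{c_\ell}\log^5 T)$ bound, and combining with the strip estimate gives the lemma.

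The main (minor) obstacle is to check that the image strip $\tfrac{j}{\ell}\bigl(\tfrac{\ell}{2\ell+1},\tfrac{\ell+1}{2\ell+1}\bigr)$ stays bounded away from $\Re(s)=\tfrac12$ by a constant depending only on $\ell$, uniformly in $j\in\{1,\dots,2\ell\}\setminus\{\ell\}$; this is exactly what forces the choice of $\beta$-window to be the narrow strip $\bigl(\tfrac{\ell}{2\ell+1},\tfrac{\ell+1}{2\ell+1}\bigr)$ rather than something broader, and it is what makes a single density estimate suffice for every forbidden factor.
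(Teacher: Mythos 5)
Your proof is correct and follows essentially the same route as the paper: invoke the Riemann--von~Mangoldt asymptotic for $N(T)$, then show that each restriction defining $Z_\ell(T)$ discards only $O_\ell(T)$ zeros by observing that the relevant points $j\rho/\ell$ (and $j(1-\rho)/\ell$) with $j\neq\ell$ have real parts bounded away from $\tfrac12$ by a constant depending only on $\ell$, and applying an off-line zero-density estimate. The one cosmetic difference is that you reach for Ingham's $N(\sigma,T)\ll T^{3(1-\sigma)/(2-\sigma)}\log^5 T$, whereas the paper uses the weaker (and earlier) Bohr--Landau bound $N(\sigma,T)\ll_\sigma T$, which is the minimal input needed; either suffices. (Both you and the paper drop the constant $\tfrac1{2\pi}$ from the Riemann--von~Mangoldt main term, so the displayed $T\log T+O_\ell(T)$ should really read $\tfrac{T}{2\pi}\log T+O_\ell(T)$; this discrepancy is inherited from the paper's statement and is harmless for the downstream application, which only needs $\#Z_\ell(T)$ to dominate $T$.)
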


\begin{proof}
In the usual notation
\begin{align*}
N (T) &= \# \{\rho= \beta+i\gamma\colon \zeta(\rho) =0,\, 0< \gamma \leq T\} \\
N (\sigma, T) &= \# \{\rho= \beta+i\gamma\colon \zeta(\rho) =0,\, 0< \gamma \leq T, \, \beta\ge\sigma\},
\end{align*}
we know that $N(T) = \frac{T}{2\pi} \log T + O(T)$ (see for example~\cite[Corollary 14.2]{MV07}), while
well-known zero-density estimates (first proved by Bohr and Landau~\cite{BL14}) imply that $N(\sigma, T)\ll_\sigma T$ for $\sigma>\frac{1}{2}$. We use this latter estimate to bound how many of the $N(T)$ zeros up to height~$T$ are not included in $Z_\ell(T)$. For the rest of this proof, all implicit constants may depend on~$\ell$.

The upper bound $\beta<\frac{\ell+1}{2\ell+1}$ excludes $N(\frac{\ell+1}{2\ell+1},T) \ll T$ zeros; also, by the symmetries of $\zeta(s)$, the lower bound $\beta>\frac{\ell}{2\ell+1}$ excludes $N(1-\frac{\ell}{2\ell+1},T) \ll T$ zeros. Suppose that~$\rho$ is a zero that has not been excluded so far. If $\ell+1\le j\le2\ell$, then $\Re(\frac{j}{\ell}\rho) \ge \frac{\ell+1}\ell \frac\ell{2\ell+1} = \frac{\ell+1}{2\ell+1} > \frac12$, and so the number of these $\frac{j}{\ell}\rho$ that can be zeros of $\zeta(s)$ is at most $N(\frac{\ell+1}{2\ell+1},T) \ll T$. Similarly, if $1\le j\le\ell-1$, then $\Re(\frac{j}{\ell}\rho) \le \frac{\ell-1}\ell \frac{\ell+1}{2\ell+1} < \frac12$, and so the number of these $\frac{j}{\ell}\rho$ that can be zeros of $\zeta(s)$ is at most $N(1-\frac{\ell-1}\ell \frac{\ell+1}{2\ell+1},T) \ll T$.
Similar observations holds for the $\zeta\bigl( \frac{j(1-\rho)}{\ell} \bigr)$ factors.
We conclude that $\#Z_\ell(T) =N(T)+O_\ell(T) =\frac{T}{2\pi} \log T+O_\ell(T)$ as desired.
\end{proof}

\begin{lem}\label{lem:Landau}
If $x>0$ with $x \neq 1$, then 
$$
\sum_{0< \gamma \leq T} x^\rho \ll_x T.
$$    
\end{lem}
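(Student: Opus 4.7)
The statement is a weak form of the classical Landau formula for sums over the nontrivial zeros of $\zeta(s)$. The sharper asymptotic
\[
\sum_{0<\gamma\le T} x^\rho = -\frac{T}{2\pi}\Lambda(x) + O_x(\log T),
\]
with $\Lambda(x)=0$ unless $x$ is an integer prime power, immediately implies the claimed bound, so my plan is to establish this asymptotic (or just its $O_x(T)$ consequence) by contour integration.

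Assume first that $x>1$. I would apply the residue theorem to $-\zeta'(s)/\zeta(s)\cdot x^s$ on the positively-oriented rectangle with vertices $c\pm iT_1$ and $-U\pm iT_1$, where $c>1$ is fixed, $U>0$ is large and chosen to avoid the trivial zeros, and $T_1\in[T,T+1]$ is shifted so that $|T_1-\gamma|\gg 1/\log T$ for every zero ordinate~$\gamma$ (such a $T_1$ exists because the total number of zeros with $T\le\gamma\le T+1$ is $O(\log T)$). The residues enclosed contribute $-x$ from the simple pole of $\zeta(s)$ at $s=1$, the sum $\sum_{0<\gamma\le T_1} x^\rho$ from the non-trivial zeros in the upper half, its complex conjugate from those in the lower half, and explicit bounded contributions from the trivial zeros lying in $[-U,0]$.

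The estimates on the four sides of the rectangle are standard. On the right vertical side at $\sigma=c$, absolute convergence of the Dirichlet series $-\zeta'/\zeta$ gives a contribution of $O_x(T)$. On the horizontal sides at heights $\pm T_1$, the careful choice of $T_1$ makes the classical bound $\zeta'(\sigma+iT_1)/\zeta(\sigma+iT_1)\ll(\log T)^2$ applicable uniformly in $-U\le\sigma\le c$, which yields a contribution of $O_x((\log T)^2)$. On the left vertical side at $\sigma=-U$, the functional equation for $\zeta(s)$ rewrites $\zeta'/\zeta$ in terms of $\zeta'/\zeta(1-s)$ plus explicit $\Gamma$-factors, and the resulting contribution vanishes in the limit $U\to\infty$ (modulo the trivial-zero residues already accounted for). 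Isolating the sum of interest and using $N(T+1)-N(T)\ll\log T$ to pass from $T_1$ back to $T$ yields $\sum_{0<\gamma\le T}x^\rho\ll_x T$ as required.

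For $0<x<1$, I would reduce to the previous case via the functional-equation symmetry $\rho\leftrightarrow 1-\bar\rho$ among zeros of $\zeta(s)$, which yields
\[
\sum_{0<\gamma\le T}x^\rho = x\,\overline{\sum_{0<\gamma\le T}(1/x)^\rho},
\]
and then apply the $x>1$ bound with $1/x$ in place of $x$. The main technical obstacle is the careful choice of the shifted height~$T_1$ so that the horizontal estimate on $\zeta'/\zeta$ is valid, together with tracking the dependence on~$x$ in the implied constants (which degrade as $x\to 1$); both points are handled by the standard treatment of the Landau formula in the literature and can simply be cited if a self-contained proof is not desired.
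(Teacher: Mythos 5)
Your proposal is correct, and it takes a genuinely different route from the paper for the $0<x<1$ case. For $x>1$, both you and the paper invoke Landau's explicit formula $\sum_{0<\gamma\le T}x^\rho = -\frac{T}{2\pi}\Lambda(x)+O_x(\log T)$; the paper simply cites Landau~\cite{L12}, while you sketch the contour-integration derivation and note it can equally be cited. For $0<x<1$, however, the paper cites a one-sided Landau-type formula due to Gonek~\cite{G93} with $\Lambda(1/x)$ in place of $\Lambda(x)$, whereas you reduce to the $x>1$ case via the zero symmetry $\rho\mapsto 1-\bar\rho$, which fixes $\gamma$ and hence the truncation $0<\gamma\le T$, giving $\sum_{0<\gamma\le T}x^\rho = x\,\overline{\sum_{0<\gamma\le T}(1/x)^\rho}$. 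That reduction is valid and avoids the second citation, making the argument marginally more self-contained. One small slip in your $x>1$ sketch: a rectangle symmetric about the real axis with vertices $c\pm iT_1$, $-U\pm iT_1$ encloses both $\rho$ and $\bar\rho$, so the residues from the nontrivial zeros sum to $2\Re\bigl(\sum_{0<\gamma\le T_1}x^\rho\bigr)$ rather than the complex one-sided sum you need; the standard treatment uses a rectangle confined to the upper half-plane (bottom edge at some fixed small height below the lowest zero ordinate $\gamma_1\approx 14.13$), which isolates exactly the zeros with $0<\gamma\le T_1$. This is a routine fix and does not affect the conclusion.
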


\begin{proof}
Landau's formula~\cite{L12} tells us that if $x>1$,
$$
\sum_{0< \gamma \leq T} x^\rho=-\frac{T}{2\pi}\Lambda(x)+O(\log T),
$$
where $\Lambda(x)$ is the von Mangoldt function when~$x$ is an integer and $\Lambda(x)=0$ otherwise.
When $0<x<1$, one can derive from Landau's formula (see for example~\cite[``Corollary'']{G93}) that
$$
\sum_{0< \gamma \leq T} x^\rho=-\frac{Tx}{2\pi}\Lambda\biggl( \frac{1}{x} \biggr)+O(\log T).
$$
In both cases the right-hand side is $\ll_x T$ as required.
\end{proof}

It will be helpful to give names to the Euler factors appearing in Notation~\ref{sec 3 notation}.

\begin{notn} \label{Cp notation}
For each prime~$p$, define
$$
C_p(s) =1+\sum_{j=1}^{\infty} \frac{\ep_j}{p^{js}}
\quad\text{and}\quad
\widetilde{C}_p(s) =1+\sum_{j=2\ell+1}^{\infty} \frac{\eta_{j}}{p^{js}},
$$
so that $D_f(s) = \prod_{p} C_p(s)$ for $\sigma>1$ and $U_{2\ell}(s) = \prod_{p} \widetilde{C}_p(s)$ for $\sigma>\frac{1}{2\ell+1}$. Note that the bounds $|\ep_j| \le 1$ and $|\eta_j| \le (Aj)^B$ imply that the series defining $C_p(s)$ and $\widetilde C_p(s)$ both converge for $\sigma>0$.
\end{notn}

\begin{lem}\label{lem:deduction}
In the situation described by Notation~\ref{sec 3 notation}, there exists a positive integer~$P$, depending only on~$\ell$, such that for each zero $\rho$ of $\zeta$ with $\Re(\rho)>\frac{\ell}{2\ell+1}$,
\[
U_{2\ell}\biggl( \frac{\rho}{\ell} \biggr)\neq 0
\quad\text{if and only if}\quad
\prod_{p \leq P} C_p\biggl( \frac{\rho}{\ell} \biggr)\neq 0.
\]
\end{lem}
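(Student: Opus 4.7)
The plan is to reduce vanishing of $U_{2\ell}(\rho/\ell)$ to vanishing of the finite product $\prod_{p\le P}C_p(\rho/\ell)$ by combining absolute convergence of the Euler product for $U_{2\ell}$ with a triangle-inequality estimate that handles the tail of primes uniformly in $\rho$. The first step is a per-prime identity obtained by comparing the two Euler product representations of $D_f$ arising from Notations~\ref{sec 3 notation} and~\ref{Cp notation}:
\[
C_p(s) = \widetilde{C}_p(s)\prod_{j=1}^{2\ell}(1-p^{-js})^{-a_j}.
\]
At $s=\rho/\ell$ every factor $1-p^{-j\rho/\ell}$ is nonzero (since $|p^{-j\rho/\ell}|<1$ whenever $\Re(\rho)>0$), so each $(1-p^{-j\rho/\ell})^{-a_j}$ is both finite and nonzero regardless of the sign of $a_j$. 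Consequently $C_p(\rho/\ell)=0$ if and only if $\widetilde{C}_p(\rho/\ell)=0$, for every prime $p$.

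Next, because $U_{2\ell}(s)=\prod_p\widetilde{C}_p(s)$ converges absolutely for $\Re(s)>1/(2\ell+1)$ by Lemma~\ref{really does converge lemma}, and $\Re(\rho/\ell)>1/(2\ell+1)$ by hypothesis, the standard fact that an absolutely convergent infinite product vanishes if and only if one of its factors vanishes gives
\[
U_{2\ell}(\rho/\ell)=0 \iff \widetilde{C}_p(\rho/\ell)=0 \text{ for some prime } p.
\]
Combining this with the per-prime reduction, the lemma will follow once we rule out vanishing of $\widetilde{C}_p(\rho/\ell)$ for all primes $p$ above a threshold depending only on $\ell$.

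For this final step I would apply the triangle inequality together with the coefficient bound $|\eta_j|\le(Aj)^B$ from Notation~\ref{sec 3 notation} and the monotonicity $p^{j\Re(\rho)/\ell}\ge p^{j/(2\ell+1)}$ (which uses $\Re(\rho)>\ell/(2\ell+1)$), obtaining
\[
|\widetilde{C}_p(\rho/\ell)| \ge 1 - \sum_{j=2\ell+1}^\infty \frac{(Aj)^B}{p^{j/(2\ell+1)}}.
\]
The right-hand sum depends only on $p$ and $\ell$, converges geometrically, and its leading term $(A(2\ell+1))^B/p$ shows it is $O_\ell(1/p)$ as $p\to\infty$. Choosing $P=P(\ell)$ large enough to force this sum below $1$ for every $p>P$ then produces the desired threshold, which yields the claimed equivalence. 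The main subtlety I anticipate is ensuring that $P$ can be chosen independently of $\rho$, rather than merely for each $\rho$ one at a time; the monotonicity step above—replacing $\Re(\rho)/\ell$ by the smaller $1/(2\ell+1)$ in the exponent—is precisely what makes the tail bound uniform in $\rho$ and renders $P$ a function of $\ell$ alone.
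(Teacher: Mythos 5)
Your proposal is correct and takes essentially the same approach as the paper: the same per-prime identity relating $C_p$ and $\widetilde C_p$ via the finite product of $(1-p^{-js})^{\pm a_j}$ factors, the same observation that an absolutely convergent Euler product vanishes iff some factor does, and the same uniform triangle-inequality tail estimate using $|\eta_j|\le(Aj)^B$ and $\Re(\rho/\ell)>1/(2\ell+1)$ to produce a threshold $P=P(\ell)$. (The paper presents the steps in the opposite order and phrases the tail bound via monotone convergence rather than your explicit $O_\ell(1/p)$ estimate, but the content is identical; you also correctly write the per-prime identity with the exponent $-a_j$ on the $C_p$-to-$\widetilde C_p$ side, where the paper's version has a harmless sign slip that does not affect the nonvanishing conclusion.)
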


\begin{proof}
We certainly have
\[
\biggl| 1 - \widetilde{C}_p\biggl( \frac{\rho}{\ell} \biggr) \biggr| \le \sum_{j=2\ell+1}^{\infty} \frac{(Aj)^B}{p^{j\Re\rho/\ell}} < \sum_{j=2\ell+1}^{\infty} \frac{(Aj)^B}{p^{j/(2\ell+1)}}.
\]
The right-hand side is a positive series that is decreasing in~$p$ and tends to~$0$ termwise; by the monotone convergence theorem, the series itself tends to~$0$ as $p \to \infty$. In particular, there exists a positive integer~$P$ such that the series is less than~$1$ when $p\ge P$, and for these primes we deduce that $\widetilde{C}_p(\rho/\ell) \ne 0$. Moreover, since~$A$ and~$B$ depend only on~$\ell$, the same is true of~$P$.

As we observed in Notation~\ref{sec 3 notation}, $U_{2\ell}(\rho/\ell) =0$ only if $\widetilde{C}_p(\rho/\ell) =0$ for some prime~$p$, and such a prime must necessarily be less than~$P$. On the other hand, observe that equation~\eqref{eq:factorization} implies that
$\widetilde{C}_p(s) = C_p(s) \prod_{j=1}^{2\ell} (1-p^{-js})^{-a_j}$ for $\Re s>1$, and so this identity remains true in the larger-half plane $\Re s>0$ where all terms converge. In particular,
$$
\widetilde{C}_p\biggl( \frac{\rho}{\ell} \biggr) = C_p\biggl( \frac{\rho}{\ell} \biggr) \prod_{j=1}^{2\ell} (1-p^{-j\rho/\ell})^{-a_j},
$$
and thus $\widetilde{C}_p(\rho/\ell) =0$ if and only if $C_p(\rho/\ell) =0$, which completes the proof of the lemma.
\end{proof}

We now have all the ingredients to establish Proposition~\ref{prop:nonzero}.

\begin{proof}[Proof of Proposition~\ref{prop:nonzero}]
For notational convenience, set $\ep_0=1$. Note that uniformly for all primes~$p$, all positive integers~$n$, and all zeros~$\rho$ of~$\zeta(s)$ with $\Re(\rho)\ge\frac{\ell}{2\ell+1}$,
\begin{align*}
C_p\biggl( \frac{\rho}{\ell} \biggr) &= \sum_{j=0}^n \frac{\ep_j}{p^{{j\rho}/{\ell}}} + O\biggl( \sum_{j=n+1}^\infty \frac1{p^{{j\Re\rho}/{\ell}}} \biggr) \\
&= \sum_{j=0}^n \frac{\ep_j}{p^{{j\rho}/{\ell}}} + O\biggl( \sum_{j=n+1}^\infty \frac1{p^{{j}/{(2\ell+1})}} \biggr) = \sum_{j=0}^n \frac{\ep_j}{p^{{j\rho}/{\ell}}} + O\biggl( \frac{1}{p^{(n+1)/(2\ell+1)}} \biggr).
\end{align*}

Let $P$ be the positive integer (depending only on~$\ell$) from Lemma~\ref{lem:deduction}, and set
$$
Q_n = \biggl\{ \prod_{p \leq P} p^{\alpha(p)}\colon 0 \leq \alpha(p)\leq n \biggr\}.
$$
Note that $|Q_n|\leq (n+1)^P$.
By expanding the product, we see that uniformly for all $n\in\N$ and for all zeros~$\rho$ of $\zeta(s)$ with $\frac{\ell}{2\ell+1}<\Re(\rho)<\frac{\ell+1}{2\ell+1}$,
\begin{align*}
\prod_{p \leq P} & C_p\biggl( \frac{\rho}{\ell} \biggr) C_p\biggl( \frac{1-\rho}{\ell} \biggr) \\
&= \prod_{p \leq P} \biggl( \sum_{j=0}^n \frac{\ep_j}{p^{{j\rho}/{\ell}}}+O\biggl( \frac{1}{p^{(n+1)/(2\ell+1)}} \biggr)\biggr) \biggl( \sum_{j=0}^n \frac{\ep_j}{p^{{j(1-\rho)}/{\ell}}}+O\biggl( \frac{1}{p^{(n+1)/(2\ell+1)}} \biggr)\biggr) \\
&= \biggl( \sum_{q \in Q_{n}} \frac{f(q)}{q^{\rho/\ell}} \biggr) \biggl( \sum_{q \in Q_{n}} \frac{f(q)}{q^{(1-\rho)/\ell}} \biggr)+O\biggl( \frac{(n+1)^{2P}-1}{2^{(n+1)/(2\ell+1)}} \biggr).
\end{align*}
In particular, we can choose~$n$ sufficiently large in terms of~$\ell$ so that
\begin{equation} \label{eq:expansion}
\biggl| \prod_{p \leq P} C_p\biggl( \frac{\rho}{\ell} \biggr) C_p\biggl( \frac{1-\rho}{\ell} \biggr) \biggr| \ge \biggl| \biggl( \sum_{q \in Q_{n}} \frac{f(q)}{q^{\rho/\ell}} \biggr) \biggl( \sum_{q \in Q_{n}} \frac{f(q)}{q^{(1-\rho)/\ell}} \biggr) \biggr| - \frac12.
\end{equation}

For each nontrivial zero $\rho$ of $\zeta$, we have
\begin{align*}
\biggl( \sum_{q \in Q_{n}} \frac{f(q)}{q^{\rho/\ell}} \biggr) \biggl( \sum_{q \in Q_{n}} \frac{f(q)}{q^{(1-\rho)/\ell}} \biggr) &= \sum_{q_1,q_2 \in Q_{n}} \frac{f(q_1)f(q_2)}{q_2^{1/\ell}} \biggl( \frac{q_2}{q_1} \biggr)^{\rho/\ell} \\
&= \sum_{q \in Q_{n}} \frac{f(q)^2}{q^{1/\ell}}+ \sum_{\substack{q_1,q_2 \in Q_{n}\\ q_1 \neq q_2}} \frac{f(q_1)f(q_2)}{q_2^{1/\ell}} \biggl( \frac{q_2}{q_1} \biggr)^{\rho/\ell}.
\end{align*}
By Lemma~\ref{lem:Landau}, for each $q_1,q_2 \in Q_n$ with $q_1 \neq q_2$,
$$
\sum_{0<\gamma \leq T} \biggl( \frac{q_2}{q_1} \biggr)^{\rho/\ell} \ll T;
$$
while the implicit constant depends on~$q_1$ and~$q_2$, both numbers come from the finite set~$Q_n$ which depends only on~$\ell$. We conclude that
\begin{align*} 
\sum_{0<\gamma \leq T} \biggl( \sum_{q \in Q_{n}} \frac{f(q)}{q^{\rho/\ell}} \biggr) \biggl( \sum_{q \in Q_{n}} \frac{f(q)}{q^{(1-\rho)/\ell}} \biggr) &= \biggl( \sum_{q \in Q_{n}} \frac{f(q)^2}{q^{1/\ell}} \biggr) N(T) + O_\ell(T) \\
&\ge 1\cdot N(T) + O_\ell(T) = \frac{T}{2\pi} \log T + O_\ell(T),
\end{align*}
since $f(1)=1$.
Also, note that for each nontrivial zero~$\rho$ of $\zeta(s)$, the trivial bound $|f(q)|\le1$ implies that
$$
\biggl|\biggl( \sum_{q \in Q_{n}} \frac{f(q)}{q^{\rho/\ell}} \biggr) \biggl( \sum_{q \in Q_{n}} \frac{f(q)}{q^{(1-\rho)/\ell}} \biggr)\biggr|\leq |Q_n|^2 \leq (n+1)^{2P}.
$$
Since the set $Z_\ell(T)$ defined in Lemma~\ref{lem:Zl} excludes $\ll_\ell T$ zeros up to height~$T$, it follows that
\begin{align*}
\sum_{\rho \in Z_\ell(T)} \biggl( \sum_{q \in Q_{n}} \frac{f(q)}{q^{\rho/\ell}} \biggr) \biggl( \sum_{q \in Q_{n}} \frac{f(q)}{q^{(1-\rho)/\ell}} \biggr) &= \sum_{0<\gamma \leq T} \biggl( \sum_{q \in Q_{n}} \frac{f(q)}{q^{\rho/\ell}} \biggr) \biggl( \sum_{q \in Q_{n}} \frac{f(q)}{q^{(1-\rho)/\ell}} \biggr) + O_\ell(T (n+1)^{2P}) \\
&\ge \frac{T}{2\pi} \log T + O_\ell(T).
\end{align*}
Finally, equation~\eqref{eq:expansion} implies that
$$
\sum_{\rho \in Z_\ell(T)} \biggl( \prod_{p \leq P} C_p\biggl( \frac{\rho}{\ell} \biggr) C_p\biggl( \frac{1-\rho}{\ell} \biggr)\biggr) \ge \frac{T}{2\pi} \log T  + O_\ell(T) - \frac12 \#Z_\ell(T) \ge  \frac{T}{4\pi} \log T + O_\ell(T).
$$
In particular, when~$T$ is sufficiently large in terms of~$\ell$, there exists $\rho\in Z_\ell(T)$ such that  
$$
\prod_{p \leq P} C_p\biggl( \frac{\rho}{\ell} \biggr) C_p\biggl( \frac{1-\rho}{\ell} \biggr)\neq 0,
$$
which implies that
$$
U_{2\ell}\biggl( \frac{\rho}{\ell} \biggr)U_{2\ell}\biggl( \frac{1-\rho}{\ell} \biggr) \neq 0
$$
by Lemma~\ref{lem:deduction}.

Since $\rho\in Z_\ell(T)$ implies $1-\rho\in Z_\ell(T)$, we may assume that $\Re\rho \ge \frac12$. Then $\zeta(\rho)=0$ and 
\[
\prod_{\substack{1\leq j \leq 2\ell\\j \neq \ell}}\zeta\biggl( \frac{j\rho}{\ell} \biggr)\neq 0.
\]
by the definition of $Z_\ell(T)$, and we have just shown that $U_{2\ell}\bigl( \frac{\rho}{\ell} \bigr) \ne 0$,
which confirms equation~\eqref{nonzero} and hence establishes the proposition.
\end{proof}

\subsection{The general oscillation result} \label{sec:genosc}

We are now able to establish our most general oscillation result: in the situation described by Notation~\ref{sec 3 notation} (which is used throughout the proof), we need to show that
$E_f(x) = \Omega_{\pm} \bigl(x^{1/2\ell}(\log x)^{|a_\ell|-1} \bigr)$.

\begin{proof}[Proof of Theorem~\ref{thm:general}]
We show that $E_f(x) = \Omega_- \bigl(x^{1/2\ell}(\log x)^{|a_\ell|-1} \bigr)$, as the proof of the corresponding $\Omega_+$ result is almost identical.
For each $1\leq j \leq 2\ell$ with $\xi_j\ge1$, let the coefficients $b_{j,k}$ for $0\le k\le\xi_j-1$ be defined by $P_j(y) = \sum_{k=0}^{\xi_j-1} b_{j,k} y^{k}$. Define
$$
\widetilde{D}(s) =D_f(s)-\sum_{\substack{ 1\leq j \leq 2\ell \\ \xi_j \geq 1}} \sum_{k=0}^{\xi_j-1} \frac{ b_{j,k}s \cdot k!}{(s-\frac{1}{j})^{k+1}}.
$$
Note that for each $k \geq 0$,
$$
\frac{k!}{(s-\frac{1}{j})^{k+1}}= \int_{1}^{\infty} \frac{x^{1/j}(\log x)^k}{x^{s+1}} dx.
$$
It follows that
$$
\widetilde{D}(s) =s \int_{1}^{\infty} \frac{E_f(x)}{x^{s+1}},
$$
and thus $\widetilde{D}(s)$ has no real pole with $s\geq \frac{1}{2\ell}$. 

Let $r>0$ be a constant to be chosen later, and define
$$
H(s) = \widetilde{D}(s)+\frac{rs (|a_\ell|-1)!}{(s-\frac{1}{2\ell})^{-a_\ell}}.
$$
Then for $\sigma>1$,
\begin{equation}\label{eq:difference}
H(s) =s \int_{1}^{\infty} \frac{E_f(x)+rx^{1/2\ell}(\log x)^{|a_\ell|-1}}{x^{s+1}}\,dx. 
\end{equation}

Suppose that $E_f(x)+rx^{1/2\ell}(\log x)^{|a_\ell|-1}$ is positive when~$x$ is sufficiently large. Note that $\widetilde{D}(s)$ has no real singularity with $s\geq \frac{1}{2\ell}$, and the smallest real singularity of $(s-\frac{1}{2\ell})^{a_\ell}$ is at $s= \frac{1}{2\ell}$. Thus, Landau's theorem (see for example~\cite[Lemma 15.1]{MV07}) implies that $H(s)$ is analytic for $\sigma>\frac{1}{2\ell}$ and equation~\eqref{eq:difference} holds for $\sigma>\frac{1}{2\ell}$. On the other hand, by Proposition~\ref{prop:nonzero}, there is a nontrivial zero $\rho$ of $\zeta$ such that $\Re(\rho)\geq \frac{1}{2}$ and 
\begin{equation}\label{eq:nonzero}
U_{2\ell}\biggl( \frac{\rho}{\ell} \biggr) \cdot \prod_{\substack{1\leq j \leq 2\ell\\j \neq \ell}}\zeta\biggl( \frac{j\rho}{\ell} \biggr)\neq 0.    
\end{equation}
In particular, since $\zeta(\ell s)$ has a zero at $\rho/\ell$, $D_f(s)$ does indeed have a pole at $\rho/\ell$ by equation~\eqref{eq:factorization}, and thus so does $H(s)$. Since $H(s)$ is analytic for $\sigma>\frac{1}{2\ell}$, we deduce that $\Re(\rho) = \frac{1}{2}$. Set $\rho= \frac{1}{2}+i\gamma$ and $\gamma'= \gamma/\ell$. Equation~\eqref{eq:difference} implies that for $\sigma>\frac{1}{2\ell}$,
\begin{align*}
|H(\sigma+i \gamma')|
\leq |\sigma+i \gamma'| \int_{1}^{\infty} \frac{\big|E_f(x)+rx^{1/2\ell}(\log x)^{|a_\ell|-1} \big|}{x^{\sigma+1}}\,dx= \frac{|\sigma+i \gamma'|}{\sigma} H(\sigma) < 2\ell|\rho| H(\sigma).    
\end{align*}

Let $m$ be the order of~$\rho$ as a zero of $\zeta(s)$, and set $m'=|a_\ell| m$. The above inequality implies that
\begin{equation}\label{eq:ineq}
\lim_{\sigma \to \frac{1}{2\ell}^+} \biggl( \sigma-\frac{1}{2\ell} \biggr)^{m'}
|H(\sigma+i \gamma')|
\leq 2\ell|\rho|\lim_{\sigma \to \frac{1}{2\ell}^+} \biggl( \sigma-\frac{1}{2\ell} \biggr)^{m'} H(\sigma).
\end{equation}
Since $\widetilde{D}(s)$ is analytic at $\sigma= \frac{1}{2\ell}$, the right-hand side of inequality~\eqref{eq:ineq} is equal to
$$
2\ell|\rho|\lim_{\sigma \to \frac{1}{2\ell}^+} \biggl( \sigma-\frac{1}{2\ell} \biggr)^{m'} \biggl|\frac{r\sigma(|a_\ell|-1)!}{(\sigma-\frac{1}{2\ell})^{-a_\ell}} \biggr|=
\begin{cases}
    r|\rho|(|a_\ell|-1)!, &\text{if } m=1,\\
    0, &\text{if } m>1.
\end{cases}
$$
Since $\rho$ is a zero of $\zeta(s)$ of order $m$, the left-hand side of inequality~\eqref{eq:ineq} is equal to
\begin{align*}
\lim_{\sigma \to \frac{1}{2\ell}^+} & \biggl( \sigma-\frac{1}{2\ell} \biggr)^{m'}
\biggl|D_f(\sigma+i \gamma')-\sum_{\substack{ 1\leq j \leq 2\ell \\ \xi_j \geq 1}} \sum_{k=0}^{\xi_j-1} \frac{ b_{j,k}s \cdot k!}{(\sigma+i \gamma'-\frac{1}{j})^{k+1}}+\frac{r(\sigma+i \gamma')(|a_\ell|-1)!}{(\sigma+i \gamma'-\frac{1}{2\ell})^{-a_\ell}} \biggr|\\
&= \lim_{\sigma \to \frac{1}{2\ell}^+} \biggl( \sigma-\frac{1}{2\ell} \biggr)^{m'} \big|D_f(\sigma+i \gamma')\big|= \biggl( \frac{m!}{\ell^m |\zeta^{(m)}(\rho)|} \biggr)^{-a_\ell} \biggl|U_{2\ell}\biggl( \frac{\rho}{\ell} \biggr)\biggr| \prod_{\substack{1\leq j \leq 2\ell \\ j \neq \ell}} \biggl|\zeta\biggl( \frac{j\rho}{\ell} \biggr)\biggr|^{a_j}.    
\end{align*}
Now inequality~\eqref{eq:nonzero} implies that $m=1$ and thus 
\begin{equation}\label{eq:lb}
\frac{|U_{2\ell}(\frac{\rho}{\ell})|}{(\ell |\zeta'(\rho)|)^{-a_\ell}} \cdot \prod_{\substack{1\leq j \leq 2\ell \\ j \neq \ell}} \biggl|\zeta\biggl( \frac{j\rho}{\ell} \biggr)\biggr|^{a_j} \leq r|\rho|(|a_\ell|-1)!.    
\end{equation}
Note that inequality~\eqref{eq:nonzero} implies that the left-right side of inequality~\eqref{eq:lb} is nonzero, and thus inequality~\eqref{eq:lb} can only hold when~$r$ is sufficiently large. In other words, for smaller positive values of~$r$, the difference $E_f(x)+rx^{1/2\ell}(\log x)^{|a_\ell|-1}$ cannot be always positive for sufficiently large~$x$, which shows that $E_f(x) = \Omega_- \bigl(x^{1/2\ell}(\log x)^{|a_\ell|-1} \bigr)$ as required.
\end{proof}

\subsection{Applications} \label{sec:applications}

It is now a simple matter to use Theorem~\ref{thm:general} to derive Theorems~\ref{thm:mainmu} and~\ref{thm:mainkfree}, which apply to M\"obius-type and powerfree-type fake~$\mu$'s, respectively. We also use Theorem~\ref{thm:general} to give a general result (Theorem~\ref{thm:main_full} below) that is our strongest oscillation result for powerfull-type fake~$\mu$'s. Together these three results imply Theorem~\ref{thm:weak}.

\begin{proof}[Proof of Theorem~\ref{thm:mainmu}]
For each $1\leq j \leq 2k$, Proposition~\ref{prop:typeI} tells us that $D_f(s)$ has at most a simple pole at $s= \frac{1}{j}$; therefore the residue of $D_f(s) \cdot \frac{x^s}{s}$ at $s= \frac{1}{j}$ is $\faf(j)x^{1/j}$ where $\faf(j)$ is the constant from Definition~\ref{defn:doublepole}. The theorem then follows from Theorem~\ref{thm:general} by applying the partial zeta-factorization of $D_f(s)$ in Proposition~\ref{prop:typeI}.    
\end{proof}

Combining Theorem~\ref{thm:algorithm} and Theorem~\ref{thm:general} results immediately in the following theorem.

\begin{thm}\label{thm:main_full}
Let $f \in \cF$ be of powerfree-type or powerfull-type. In the notation in Theorem~\ref{thm:algorithm},
\begin{align*}
G_f(x) &= \sum_{j=1}^{M} \Res\biggl(D_f(s) \cdot \frac{x^s}{s}, \frac{1}{c_j} \biggr)+
\sum_{j= \ell+1}^{2\ell} \Res\biggl(D_f(s) \cdot \frac{x^s}{s}, \frac{1}{j} \biggr) \\
E_f(x) &= \Omega_{\pm} (x^{{1}/{2\ell}} (\log x)^{n_{\ell}-\ep_{\ell}-1}).
\end{align*}
\end{thm}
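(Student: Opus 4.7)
The plan is to combine Theorem~\ref{thm:algorithm} with the general oscillation result Theorem~\ref{thm:general}; the statement is essentially a specialization of the latter to the setting prepared by the former. First I would note that Theorem~\ref{thm:algorithm} provides the zeta-factorization~\eqref{eq:factorkfull} of $D_f(s)$ with critical index~$\ell$, negative exponent $a_\ell = -(n_\ell-\ep_\ell)$, and a function $U_{2\ell}(s)$ that is analytic for $\sigma>1/(2\ell+1)$ by Lemma~\ref{really does converge lemma}. This places $f$ squarely in the setting of Notation~\ref{sec 3 notation}, so both the definition of $G_f(x)$ and the hypotheses of Theorem~\ref{thm:general} apply to it without further preparation.

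Next I would compute $G_f(x)$ explicitly from its definition \eqref{eq:G} by determining, for each $1\le j\le 2\ell$, whether $D_f(s)$ has a pole at $s=1/j$. Since $U_{2\ell}(s)$ is analytic throughout the region of interest, all poles of $D_f(s)$ come from the zeta factors in \eqref{eq:factorkfull}. Four cases arise: (i) if $j=c_i$ for some principal index, then $\zeta(c_i s)$ in the numerator supplies a simple pole, producing the first sum; (ii) if $j=\ell$, the denominator $\zeta(\ell s)^{n_\ell-\ep_\ell}$ forces $D_f(s)$ to \emph{vanish} at $s=1/\ell$ to order $n_\ell-\ep_\ell\ge 1$, so the residue is zero and the index $j=\ell$ is correctly absent from the displayed formula; (iii) if $\ell+1\le j\le 2\ell$, a pole of order $\max\{a_j,0\}$ appears when $a_j>0$, giving the second sum; (iv) for any remaining $1\le j\le\ell-1$ with $j\notin\{c_1,\dots,c_M\}$, no zeta factor in \eqref{eq:factorkfull} has a pole at $s=1/j$, so $D_f(s)$ is analytic there and the residue vanishes. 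Summing the nonzero contributions yields the stated expression for $G_f(x)$.

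Finally I would invoke Theorem~\ref{thm:general}, which delivers $E_f(x) = \Omega_{\pm}(x^{1/2\ell}(\log x)^{|a_\ell|-1})$; the identity $|a_\ell| = n_\ell-\ep_\ell$ then turns this into the claimed $\Omega_{\pm}(x^{1/2\ell}(\log x)^{n_\ell-\ep_\ell-1})$. There is no substantive obstacle: the heavy analytic machinery (the zero-density input of Lemma~\ref{lem:Zl}, the Landau--Gonek style average of Lemma~\ref{lem:Landau}, and the existence of a zero~$\rho$ at which the other factors in \eqref{eq:factorkfull} do not vanish) has been absorbed into Theorem~\ref{thm:general}, while the zeta-factorization is prepared by Theorem~\ref{thm:algorithm}. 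The only care required is the case analysis above; the mildly nonobvious point is step~(ii), namely that $D_f(s)$ vanishes rather than blows up at $s=1/\ell$, which is precisely what lets us truncate the sum in \eqref{eq:G} to the two pieces appearing in the theorem.
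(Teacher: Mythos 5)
Your proposal is correct and takes the same route as the paper, which disposes of this theorem in a single sentence: ``Combining Theorem~\ref{thm:algorithm} and Theorem~\ref{thm:general} results immediately in the following theorem.''  You have simply spelled out the verification that the authors leave implicit, namely, the case analysis confirming that the only indices $1\le j\le 2\ell$ at which $D_f(s)$ can have a pole at $s=1/j$ are $j\in\{c_1,\dots,c_M\}\cup\{\ell+1,\dots,2\ell\}$, that $j=\ell$ is a zero rather than a pole (because $\zeta(\ell s)^{n_\ell-\ep_\ell}$ sits in the denominator), and that $|a_\ell|=n_\ell-\ep_\ell$ so that Theorem~\ref{thm:general} gives the claimed logarithmic power.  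Nothing is missing.
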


\begin{proof}[Proof of Theorem~\ref{thm:mainkfree}]
Let $1 \leq j \leq 2k$. By Proposition~\ref{prop:typeII}, $D_f(s)$ has a pole at $s= \frac{1}{j}$ with order at most $2$. By Definition~\ref{defn:doublepole}, the principal part of $D_f(s)$ is
\[
\frac{\fbf(j)}{j^2(s-1/j)^2} + \frac{\faf(j)}{j(s-1/j)}.
\]
Thus the residue of $D_f(s) \cdot \frac{x^s}{s}$ at $s= \frac{1}{j}$ is given by
$$
\lim_{s \to \frac{1}{j}} \frac{d}{ds} D_f(s) \cdot \frac{x^s}{s}= \lim_{s \to \frac{1}{j}} \frac{d}{ds} \biggl( \frac{\fbf(j)}{j^2(s-1/j)^2} + \frac{\faf(j)}{j(s-1/j)} \biggr) \cdot \frac{x^s}{s}= \faf(j)x^{1/j}+\fbf(j) x^{1/j} \biggl( \frac{\log x}{j}-1\biggr).
$$
The theorem now follows from Proposition~\ref{prop:typeII} and Theorem~\ref{thm:main_full}. 
\end{proof}

\section{Upper bounds on $E_f(x)$}\label{sec:ub}

In this section, we prove our upper bounds on $E_f(x)$, both unconditional (Theorem~\ref{thm:unconditional}, which is proved in Section~\ref{sec:conv}) and assuming RH (Theorem~\ref{thm:ubRH}, which is proved for powerfree-type~$f$ in Section~\ref{sec:conv} and for M\"obius- and powerfull-type~$f$ in Section~\ref{sec:conditional2}). Our main motivation for establishing these upper bounds is to provide some sort of calibration against which to gauge the strength of our oscillation results. As it happens, this exercise also allows us to gather techniques from the literature and generalize their scope to all fake~$\mu$'s; in doing so, we have often recovered, and sometimes even improved, the best known upper bounds for error terms in special cases. We will combine two different methods to prove upper bounds on $E_f(x)$, namely the convolution method (or Dirichlet hyperbola method) and the method of contour integration.

\subsection{Upper bounds on $E_f(x)$ for a special family of fake~$\mu$'s}\label{sec:gk}

In this section, we prove Theorem~\ref{thm:unconditional} and Theorem~\ref{thm:ubRH} for a special family of fake~$\mu$'s of powerfree-type. 

\begin{defn} \label{k and g def}
For each $k\ge1$, let $h_k(n)$ be the multiplicative function appearing in the Dirichlet series $\zeta(ks)^{-2} = \sum_{n=1}^\infty h_k(n) n^{-s}$. We can check that $h_k(p^k)=-2$ (so that~$h_k$ is not quite a fake~$\mu$) and $h_k(p^{2k})=1$ and that $h_k(p^j)=0$ for all other $j\ge1$. Define $H_k(x)=\sum_{n \leq x}h_k(n)$.

Moreover, for each $k\ge1$, let $g_k$ be the fake~$\mu$ defined via the sequence $(\ep_j)_{j=1}^{\infty}$ with $\ep_j=1$ for $1\leq j \leq k-1$ and $\ep_j=-1$ for $k\leq j \leq 2k-1$ and $\ep_j=0$ for $j\ge 2k$. We can check that $g_k$ is the Dirichlet convolution of~$h_k$ and the constant function~$1$, which is the same as saying that the sequence $(1,\ep_1,\ep_2,\ldots)$ is the convolution of $(1,1,1,\ldots)$ and $(1,h_k(p^1),h_k(p^2),\ldots)$;
in particular, $D_{g_k}(s) = \sum_{n=1}^\infty g_k(n) n^{-s} = \zeta(s)\zeta(ks)^{-2}$. Note that $g_1=\mu$.
\end{defn}

The proof of our more general results for all fake~$\mu$'s of powerfree-type will build on the upper bounds on $E_{g_k}(x)$ in Propositions~\ref{prop:conv-1} and~\ref{prop:conv-1RH} below. As preparation, we need the following lemmas. In this section, $\tau(n)$ denotes the number of positive divisors of~$n$.

\begin{lem}\label{lem:h1prepre}
We have $|h_1(n)| \le \tau(n)$ for all $n\ge1$.
\end{lem}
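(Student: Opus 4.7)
The plan is to recognize $h_1$ as a Dirichlet convolution of two copies of the Möbius function and then estimate by the triangle inequality. Explicitly, from the identity $\zeta(s)^{-1} = \sum_{n=1}^{\infty} \mu(n) n^{-s}$ (valid for $\sigma > 1$), squaring gives
\[
\zeta(s)^{-2} = \biggl( \sum_{n=1}^{\infty} \frac{\mu(n)}{n^s} \biggr)^{\!2} = \sum_{n=1}^{\infty} \frac{(\mu * \mu)(n)}{n^s},
\]
so by the uniqueness of Dirichlet series coefficients, $h_1(n) = (\mu * \mu)(n) = \sum_{d \mid n} \mu(d)\mu(n/d)$.

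From here the bound is immediate: since $|\mu(m)| \le 1$ for every positive integer $m$, the triangle inequality yields
\[
|h_1(n)| = \biggl| \sum_{d \mid n} \mu(d)\mu(n/d) \biggr| \le \sum_{d \mid n} |\mu(d)|\,|\mu(n/d)| \le \sum_{d \mid n} 1 = \tau(n),
\]
which is the desired estimate. There is essentially no obstacle here; the only substantive point is the algebraic identification $h_1 = \mu * \mu$, after which the inequality is a one-line consequence of the trivial bound $|\mu| \le 1$ and the definition of the divisor function. (In fact the argument even gives the slightly stronger statement that $|h_1(n)|$ is bounded by the number of squarefree divisors of $n$, since $\mu(d)\mu(n/d)$ vanishes unless both $d$ and $n/d$ are squarefree, but the weaker bound in terms of $\tau(n)$ is the form needed for subsequent applications.)
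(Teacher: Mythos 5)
Your proof is correct and complete, but it takes a genuinely different route from the paper's. You identify $h_1 = \mu * \mu$ directly from the Dirichlet series identity $\zeta(s)^{-2} = \bigl(\sum_n \mu(n)n^{-s}\bigr)^2$ and then apply the triangle inequality to the convolution sum, giving $|h_1(n)| \le \sum_{d\mid n}|\mu(d)|\,|\mu(n/d)| \le \tau(n)$ in one stroke. The paper instead exploits multiplicativity: since $|h_1|$ and $\tau$ are both multiplicative, it suffices to check the inequality on prime powers, where Definition~\ref{k and g def} already records that $h_1(p) = -2$, $h_1(p^2) = 1$, and $h_1(p^j) = 0$ for $j\ge3$, so that $|h_1(p^j)| \le j+1 = \tau(p^j)$ for all $j\ge0$. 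The paper's argument is the shorter one given that the local data on $h_1$ has already been written down; yours is more structural, avoids any case-checking at prime powers, and, as you observe, yields for free the slightly sharper bound by the number of squarefree divisors of $n$, although the weaker $\tau(n)$ bound is all that is used later.
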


\begin{proof}
Since both~$h_1$ and~$\tau$ are multiplicative, it suffices to observe that $|h_1(p^j)| \le j+1 = \tau(p^j)$ for all prime powers~$p^j$.
\end{proof}

\begin{lem}\label{lem:h1pre}
For all $k\ge1$, $\sum_{n \leq x} |h_k(n)|\ll x^{1/k}\log x$.
\end{lem}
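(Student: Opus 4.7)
The plan is to reduce the sum of $|h_k(n)|$ to a sum of $|h_1(m)|$ over a much shorter range of $m$, then to appeal to Lemma~\ref{lem:h1prepre} and the classical divisor bound.

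First, I would observe that by definition $h_k$ is supported on $k$th powers: since $h_k(p^j)=0$ unless $j\in\{k,2k\}$, the multiplicativity of $h_k$ forces $h_k(n)=0$ whenever $n$ is not itself a $k$th power. This immediately collapses the sum $\sum_{n\le x}|h_k(n)|$ to a sum over $m\le x^{1/k}$ where $n=m^k$.

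Next, I would compare the local factors of $|h_k|$ and $|h_1|$. For every prime $p$ and every $a\ge0$, direct inspection of Definition~\ref{k and g def} gives
\[
|h_k(p^{ak})| = |h_1(p^a)| = \begin{cases} 1, & a=0, \\ 2, & a=1, \\ 1, & a=2, \\ 0, & a\ge 3. \end{cases}
\]
Multiplicativity therefore yields the identity $|h_k(m^k)|=|h_1(m)|$ for all positive integers $m$. Consequently
\[
\sum_{n\le x}|h_k(n)| \;=\; \sum_{m\le x^{1/k}} |h_1(m)|.
\]

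Finally, I would invoke Lemma~\ref{lem:h1prepre} to bound $|h_1(m)|\le\tau(m)$, and then apply the classical Dirichlet divisor estimate $\sum_{m\le y}\tau(m)\ll y\log y$ with $y=x^{1/k}$, obtaining
\[
\sum_{n\le x}|h_k(n)| \;\le\; \sum_{m\le x^{1/k}}\tau(m) \;\ll\; x^{1/k}\log x^{1/k} \;\ll\; x^{1/k}\log x,
\]
as desired. There is no genuine obstacle here; the only small point worth verifying carefully is the identification $|h_k(m^k)|=|h_1(m)|$, which is the reason the sum can be reindexed to a range of length $x^{1/k}$ rather than $x$.
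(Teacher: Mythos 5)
Your proof is correct and follows essentially the same approach as the paper: both reduce $\sum_{n\le x}|h_k(n)|$ to $\sum_{m\le x^{1/k}}|h_1(m)|$ by noting $h_k$ is supported on $k$th powers, then bound $|h_1|$ by $\tau$ via Lemma~\ref{lem:h1prepre} and apply the classical divisor-sum estimate. The paper states the slightly stronger identity $h_k(m^k)=h_1(m)$ (not just equality of absolute values), but this makes no difference to the argument.
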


\begin{proof}
Note that $h_k(n)=h_1(m)$ if $n=m^k$ is a perfect $k$th power and otherwise $h_k(n)=0$. It follows that $\sum_{n \leq x}|h_k(n)|=\sum_{n \leq x^{1/k}} |h_1(n)|$; thus it suffices to prove the lemma for $k=1$. But since $|h_1(n)| \le \tau(n)$ for all $n\ge1$, the estimate $\sum_{n \leq x} |h_1(n)|\ll x\log x$ follows immediately from the classical evaluation of $\sum_{n\le x} \tau(n)$ (see for example~\cite[Theorem~2.3]{MV07}).
\end{proof}

\begin{lem}\label{lem:h1}
There exists an absolute constant~$c>0$ such that $H_k(x) \ll x^{1/k}\exp\bigl(-c\frac{(\log x)^{3/5}}{(\log \log x)^{1/5}} \bigr)$ for all $k\ge1$.
\end{lem}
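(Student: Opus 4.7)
The plan is to reduce the problem to the case $k=1$ and then apply the Korobov--Vinogradov bound on the Mertens function. As observed in the proof of Lemma~\ref{lem:h1pre}, $h_k$ is supported on perfect $k$th powers with $h_k(m^k)=h_1(m)$, so
\[
H_k(x) = \sum_{n\le x} h_k(n) = \sum_{m\le x^{1/k}} h_1(m) = H_1(x^{1/k}).
\]
Consequently, after the substitution $y=x^{1/k}$ (so that $\log y=(\log x)/k$ and $\log\log y=\log\log x-\log k$), it suffices to establish an inequality of the shape $H_1(y)\ll y\exp(-c_0(\log y)^{3/5}/(\log\log y)^{1/5})$ for an absolute constant $c_0>0$; the lemma then follows for each fixed $k\ge1$.

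To bound $H_1(y)$, I would exploit the identity $\zeta(s)^{-2}=\zeta(s)^{-1}\cdot\zeta(s)^{-1}$, which yields $h_1=\mu*\mu$, and apply Dirichlet's hyperbola method:
\[
H_1(y) \;=\; 2\sum_{d\le\sqrt{y}} \mu(d)\,M(y/d) \;-\; M(\sqrt{y})^2,
\]
where $M(z)=\sum_{n\le z}\mu(n)$. The point of using the hyperbola identity rather than a direct convolution is that the argument $y/d$ is never smaller than $\sqrt{y}$, so that the Korobov--Vinogradov bound
\[
M(z) \;\ll\; z\exp\!\Bigl(-c_0\,\tfrac{(\log z)^{3/5}}{(\log\log z)^{1/5}}\Bigr)
\]
can be applied uniformly. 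Indeed, for $d\le\sqrt{y}$ one has $\log(y/d)\ge\tfrac12\log y$ and $\log\log(y/d)\le\log\log y$, so
\[
\exp\!\Bigl(-c_0\tfrac{(\log(y/d))^{3/5}}{(\log\log(y/d))^{1/5}}\Bigr) \;\le\; \exp\!\Bigl(-c_1\tfrac{(\log y)^{3/5}}{(\log\log y)^{1/5}}\Bigr)
\]
with $c_1=c_0/2^{3/5}$. Plugging this into the hyperbola identity and summing $\sum_{d\le\sqrt y}(y/d)\ll y\log y$ produces a harmless factor of $\log y$, while the term $M(\sqrt y)^2$ is bounded analogously.

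The key technical step I anticipate as the main obstacle is absorbing this extra $\log y$ into the exponential. This is done by shrinking the constant slightly: since $(\log y)^{3/5}/(\log\log y)^{1/5}$ dominates $\log\log y$ for large $y$, one has $(\log y)\exp(-c_1\phi(y))\le\exp(-c_2\phi(y))$ for any $c_2<c_1$ and all sufficiently large $y$, where $\phi(y)=(\log y)^{3/5}/(\log\log y)^{1/5}$. This gives $H_1(y)\ll y\exp(-c_2\phi(y))$ with an absolute constant $c_2>0$. Substituting $y=x^{1/k}$ and using $\log\log(x^{1/k})=\log\log x-\log k$ recovers the stated bound; the $k$-dependence from the substitution (a factor $k^{-3/5}$ and the shift by $\log k$ inside $\log\log$) is absorbed into the implicit constant for each fixed $k$.
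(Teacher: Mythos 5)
Your proposal is correct and follows essentially the same approach as the paper: reduce to $k=1$ via $H_k(x)=H_1(x^{1/k})$, write $h_1=\mu*\mu$ and apply the hyperbola method so that $M$ is only evaluated at arguments $\ge\sqrt y$, invoke the Korobov--Vinogradov bound on $M$, and absorb the resulting $\log y$ factor by slightly shrinking the constant in the exponential. The paper writes the two hyperbola sums separately before noting they coincide, but the argument is identical to yours.
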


\begin{proof}
We have $H_k(x) = H_1(x^{1/k})$ by the same reasoning as in the proof of Lemma~\ref{lem:h1pre}, and so it suffices to prove the lemma for $k=1$. We leverage a known result for the Mertens function $M(x) = \sum_{n\le x} \mu(n)$, which uses contour integration to show that
\begin{equation}\label{eq:M(x)}
M(x) \ll x\exp\biggl(-c\frac{(\log x)^{3/5}}{(\log \log x)^{1/5}} \biggr)    
\end{equation}
for some absolute positive constant~$c$ (see for example~\cite[Theorem~12.7]{I85}). It is possible to slightly modify that proof to deal with $H_1(x)$; instead, however, we give an alternative derivation that uses the result~\eqref{eq:M(x)} directly rather than modifying its proof.

Since~$h_1$ is the Dirichlet convolution of~$\mu$ with itself, the hyperbola method implies that
\begin{align}
H_1(x)
&= \sum_{n \leq \sqrt{x}} \mu(n) M\biggl(\frac{x}{n}\biggr) + \sum_{m \leq \sqrt{x}} \mu(m) M\biggl(\frac{x}{m}\biggr) - M(\sqrt{x})M(\sqrt{x}) \notag \\
&\ll \sum_{n \leq \sqrt{x}}  \biggl|M\biggl(\frac{x}{n}\biggr)\biggr| + \bigl| M(\sqrt{x}) \bigr|^2 \label{eq:h(n)}.
\end{align}
Inequalities~\eqref{eq:M(x)} and~\eqref{eq:h(n)} imply that there are absolute constants $0<c''<c'<c$ such that
\begin{align*}
H_1(x)
&\ll\sum_{n \leq \sqrt{x}} \frac{x}{n}\exp\biggl(-c\frac{(\log (x/n))^{3/5}}{(\log \log (x/n))^{1/5}} \biggr) + x \exp\biggl(-2c\frac{(\log \sqrt{x})^{3/5}}{(\log \log \sqrt{x})^{1/5}} \biggr)\\
&\ll \sum_{n \leq \sqrt{x}} \frac{x}{n}\exp\biggl(-c'\frac{(\log x)^{3/5}}{(\log \log x)^{1/5}} \biggr) \\
&\ll x\log x\cdot \exp\biggl(-c'
\frac{(\log x)^{3/5}}{(\log \log x)^{1/5}} \biggr) \ll x\exp\biggl(-c''\frac{(\log x)^{3/5}}{(\log \log x)^{1/5}} \biggr),
\end{align*}
as desired.
\end{proof}

We now have the tools we need to establish an unconditional upper bound for the error term associated with the function~$g_k$ from Definition~\ref{k and g def}.

\begin{prop}\label{prop:conv-1}
There exists an absolute constant~$c>0$ such that $$E_{g_k}(x)\ll x^{1/k}\exp\biggl(-c\frac{(\log x)^{3/5}}{(\log \log x)^{1/5}} \biggr)$$ for all $k\ge2$.
\end{prop}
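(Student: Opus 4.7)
My plan is to apply the Dirichlet hyperbola method to the factorization $g_k = h_k \ast 1$ (from Definition~\ref{k and g def}). For parameters $U, V > 0$ with $UV = x$, this identity gives
$$
F_{g_k}(x) = \sum_{d \leq U} h_k(d) \lfloor x/d \rfloor + \sum_{e \leq V} H_k(x/e) - H_k(U) \lfloor V \rfloor.
$$
I would extract the main term $x/\zeta(k)^2$ from the first sum by writing $\lfloor x/d \rfloor = x/d - \{x/d\}$ and invoking $\sum_{d=1}^{\infty} h_k(d)/d = \zeta(k)^{-2}$, which comes from setting $s = 1$ in the Dirichlet series $\zeta(ks)^{-2} = \sum_d h_k(d) d^{-s}$. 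This rearrangement leaves four error terms to estimate.

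Write $\Phi(t) = \exp\bigl(-c(\log t)^{3/5}/(\log\log t)^{1/5}\bigr)$ for the decay factor from Lemma~\ref{lem:h1}. I would bound the four errors as follows: by partial summation after the substitution $d = m^k$ (using $k \geq 2$ so the tail integral converges), the tail sum satisfies $x \sum_{d > U} h_k(d)/d \ll x U^{(1-k)/k} \Phi(U^{1/k})$; Lemma~\ref{lem:h1pre} gives the trivial estimate $\bigl|\sum_{d \leq U} h_k(d)\{x/d\}\bigr| \ll U^{1/k} \log U$; Lemma~\ref{lem:h1} combined with $\sum_{e \leq V} e^{-1/k} \ll V^{1-1/k}$ (valid since $k \geq 2$) gives $\sum_{e \leq V} H_k(x/e) \ll x^{1/k} V^{1-1/k} \Phi(x/V)$; and finally $H_k(U) \lfloor V \rfloor \ll x U^{(1-k)/k} \Phi(U)$ directly from Lemma~\ref{lem:h1}.

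I would then choose $U = x^{1-\varepsilon}$ and $V = x^{\varepsilon}$ with $\varepsilon = C(\log x)^{-2/5}(\log\log x)^{-1/5}$ for an appropriate constant $C > 0$, the familiar de~la~Vall\'ee-Poussin scale. With this choice the tail, second-sum, and correction errors all take the form $x^{1/k} \cdot x^{\varepsilon(k-1)/k} \Phi(x^{\Theta})$ for some $\Theta$ comparable to $1/k$; a sufficiently \emph{small} $C$ ensures the polynomial loss $x^{\varepsilon(k-1)/k}$ is absorbed by the $\Phi$-saving. The fractional-parts error, by contrast, becomes $x^{1/k} \cdot x^{-\varepsilon/k} \log x$, which demands that $C$ be sufficiently \emph{large} so that $x^{-\varepsilon/k}$ overwhelms the $\log x$ and matches the target decay. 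The main obstacle is verifying that these two competing constraints on $C$ admit a nonempty intersection: a direct computation shows that for each $k \geq 2$ there is an admissible range of $C$, yielding the claimed bound $E_{g_k}(x) \ll x^{1/k} \exp\bigl(-c'(\log x)^{3/5}/(\log\log x)^{1/5}\bigr)$ for some $c' > 0$.
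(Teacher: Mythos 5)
Your proposal follows the same strategy as the paper's proof: the hyperbola method applied to $g_k = h_k * 1$, extracting the main term $x/\zeta(k)^2$ from the lower-range sum, and invoking Lemmas~\ref{lem:h1pre} and~\ref{lem:h1} to control the four resulting errors. Your parameter choice $U = x^{1-\varepsilon}$, $V = x^{\varepsilon}$ with $\varepsilon = C(\log x)^{-2/5}(\log\log x)^{-1/5}$ is literally identical to the paper's $z = x\exp(-c'(\log x)^{3/5}/(\log\log x)^{1/5})$, $y = x/z$, just written multiplicatively rather than exponentially.

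The one place your exposition wobbles is the ``competing constraints on $C$'' framing. You present the fractional-parts error $U^{1/k}\log U = x^{1/k}x^{-\varepsilon/k}\log x$ as forcing $C$ to be \emph{large}, but this is only true if you first fix the target constant $c'$ in the final bound. Since $c'$ is free to be chosen as small as you like (the Proposition only asserts existence of some $c>0$), any $C>0$ already handles that term: $x^{-\varepsilon/k} = \exp(-\tfrac{C}{k}(\log x)^{3/5}(\log\log x)^{-1/5})$ beats both $\log x$ and $\exp(-c'(\log x)^{3/5}(\log\log x)^{-1/5})$ as soon as $c' < C/(kc)$. So the true constraint is a joint one on the pair $(C, c')$, and the nonempty range you claim does exist (roughly $c' < 1/k$ suffices); the paper sidesteps the bookkeeping by fixing a single $c'$ once and observing that both surviving terms come out as $x^{1/k}\Phi(x)^{1/k}$ up to a harmless $\log x$. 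Your plan is correct, just slightly more roundabout than it needs to be at that step.
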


\begin{proof}
Since $D_{g_k}(s) = \zeta(s)\zeta(ks)^{-2}$, we have $G_{g_k}(x)={x}/{\zeta(k)^2}$. Since~$g_k$ is the Dirichlet convolution of~$h_k$ and~$1$, we can apply the hyperbola method, with a parameter $z \in (\sqrt{x},x)$ to be determined and with $y=\frac xz$:
\begin{align*}
F_{g_k}(x) = \sum_{mn \leq x} h_k(n) &= \sum_{n \leq z} h_k(n) \biggl\lfloor \frac{x}{n} \biggr\rfloor + \sum_{m\leq y} H_k\biggl(\frac{x}{m}\biggr) - \lfloor y \rfloor H_k(z) \\
&= x \sum_{n\le z} \frac{h_k(n)}n + O \biggl( \sum_{n\le z} |h_k(n)| + \sum_{m\le y} \biggl| H_k\biggl(\frac{x}{m}\biggr) \biggr| + y |H_k(z)| \biggr).
\end{align*}
Since $\sum_{n=1}^{\infty} h_k(n)/n = 1/\zeta(k)^2$, we use Lemma~\ref{lem:h1pre} to conclude that
\begin{equation} \label{eq:G1}
E_{g_k}(x) = F_{g_k}(x) - G_{g_k}(x) \ll x \biggl| \sum_{n>z} \frac{h_k(n)}n \biggr| + z^{1/k}\log z + \sum_{m\le y} \biggl| H_k\biggl(\frac{x}{m}\biggr) \biggr| + y |H_k(z)|.
\end{equation}
By Lemma~\ref{lem:h1} and partial summation,
\begin{equation}\label{eq:G2}
\frac{1}{\zeta(k)^2}-\sum_{n\leq z} \frac{h_k(n)}{n}\ll \frac{|H_k(z)|}{z}+\int_{z}^\infty \frac{|H_k(t)|}{t^2} \,dt \ll z^{1/k-1} \exp\biggl(-c\frac{(\log z)^{3/5}}{(\log \log z)^{1/5}} \biggr).
\end{equation}
Since $y \leq \sqrt{x}$, we have $x/m \geq \sqrt{x}$ for each $m \leq y$. Thus, by Lemma~\ref{lem:h1}, there is a constant $c'\in (0,c)$ such that
\begin{align}
\sum_{m\leq y} \biggl(H_k\biggl(\frac{x}{m}\biggr)-H_k(z)\biggr) 
&\ll \sum_{m \leq y} \biggl(\frac{x}{m}\biggr)^{1/k} \exp\biggl(-c\frac{(\log (x/m))^{3/5}}{(\log \log (x/m))^{1/5}} \biggr)\notag\\
&\ll x^{1/k}\sum_{m \leq y} m^{-1/k} \exp\biggl(-c'\frac{(\log x)^{3/5}}{(\log \log x)^{1/5}} \biggr)\notag\\
&\ll \frac{x}{z^{1-1/k}}\exp\biggl(-c'\frac{(\log x)^{3/5}}{(\log \log x)^{1/5}} \biggr).\label{eq:G3}
\end{align}
Combining inequalities~\eqref{eq:G1},~\eqref{eq:G2}, and~\eqref{eq:G3}, we conclude that
\begin{align}    
E_g(x) &\ll \frac{x}{z^{1-1/k}}\exp\biggl(-c'\frac{(\log x)^{3/5}}{(\log \log x)^{1/5}} \biggr)+\sum_{n \leq z}|h_k(n)|\notag\\
&\ll \frac{x}{z^{1-1/k}}\exp\biggl(-c'\frac{(\log x)^{3/5}}{(\log \log x)^{1/5}} \biggr)+z^{1/k}\log x. \label{eq:G4} 
\end{align}
If we set
$$
z=x \exp\biggl(-c'\frac{(\log x)^{3/5}}{(\log \log x)^{1/5}} \biggr),
$$
then we conclude that there exists $c'' \in (0,c')$ such that $E_g(x)\ll x^{1/k}\exp\bigl(-c''\frac{(\log x)^{3/5}}{(\log \log x)^{1/5}} \bigr)$.
\end{proof}

If we assume RH, then we can strengthen the above bound on $E_{g_k}(x)$. To do so, we first need a conditional estimate on the tail of the Dirichlet series for $1/\zeta(s)^2$, which we express in the following lemma using the function~$h_1$ from Definition~\ref{k and g def}.

\begin{lem}\label{lem:poly}
Assume RH. Let $\ep>0$ and $\sigma_0\geq \frac{1}{2}+\ep$. Uniformly for $\frac{1}{2}+\ep\leq \sigma \leq \sigma_0$, 
$$
\frac{1}{\zeta(s)^2}-\sum_{n\leq x} h_1(n)n^{-s}\ll_{\ep, \sigma_0} x^{1/2-\sigma+\ep}.   
$$    
\end{lem}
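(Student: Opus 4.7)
The approach is standard: first establish the RH-conditional bound $H_1(x) := \sum_{n\le x} h_1(n) \ll_\delta x^{1/2+\delta}$ for every $\delta > 0$, and then deduce the lemma by partial summation.

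For the bound on $H_1$, the plan is to apply the truncated Perron formula to the Dirichlet series $\sum_n h_1(n) n^{-s} = \zeta(s)^{-2}$, which for some $c > 1$ gives
$$H_1(x) = \frac{1}{2\pi i}\int_{c-iT}^{c+iT} \frac{1}{\zeta(s)^2}\frac{x^s}{s}\,ds + O\!\left(\frac{x^c \log x}{T}\right).$$
Under RH, $\zeta(s)^{-2}$ is analytic in the half-plane $\sigma > 1/2$ (note that the simple pole of $\zeta(s)$ at $s=1$ becomes a removable double zero of $\zeta(s)^{-2}$), so one can shift the contour leftward to $\sigma = \tfrac{1}{2} + \delta/2$ without encountering any singularity. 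Combining the standard RH-conditional bound $\zeta(\sigma+it)^{-1} \ll_\delta |t|^{\delta/4}$ on the line $\sigma = \tfrac{1}{2} + \delta/2$ with routine estimates for the horizontal contours, followed by a near-optimal choice of $T$ in terms of~$x$, yields $H_1(x) \ll_\delta x^{1/2+\delta}$, by exactly the same classical contour argument that produces $M(x) \ll_\delta x^{1/2+\delta}$.

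Given this bound, partial summation (valid for $\sigma > 1/2$) yields
$$\sum_{n\le x} h_1(n) n^{-s} = \frac{H_1(x)}{x^s} + s \int_1^x \frac{H_1(t)}{t^{s+1}}\,dt,$$
and sending $x \to \infty$ produces the identity $\zeta(s)^{-2} = s \int_1^\infty H_1(t) t^{-s-1}\,dt$ (the tail integral converges absolutely since $H_1(t) \ll t^{1/2+\ep/2}$ and $\sigma > 1/2 + \ep$). Subtracting,
$$\frac{1}{\zeta(s)^2} - \sum_{n\le x} h_1(n) n^{-s} = -\frac{H_1(x)}{x^s} + s \int_x^\infty \frac{H_1(t)}{t^{s+1}}\,dt.$$
Applying $|H_1(t)| \ll_\ep t^{1/2+\ep/2}$ bounds the first term by $x^{1/2+\ep/2-\sigma}$ and the tail integral, using $\sigma - \tfrac{1}{2} - \ep/2 \ge \ep/2$, by $|s|\cdot(2/\ep) x^{1/2+\ep/2-\sigma}$. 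Since $|s| \le \sigma_0$ on the real interval under consideration, both contributions are $\ll_{\ep,\sigma_0} x^{1/2+\ep-\sigma}$, as claimed.

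The main obstacle is the RH-conditional estimate $H_1(x) \ll_\delta x^{1/2+\delta}$; while conceptually parallel to the classical RH bound on the Mertens function, some technical care is needed in controlling the integrand on the shifted contour and in choosing the truncation parameter~$T$. Once this bound is in hand, however, the remainder of the argument is a short partial-summation computation.
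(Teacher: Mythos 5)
Your proof is correct for \emph{real} $s$, but it does not prove the lemma with the uniformity the paper needs, and that gap is fatal to the application.

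The lemma is invoked in the proof of Proposition~\ref{prop:conv-1RH} to bound $A(ks)$ for $s$ on vertical contours (specifically $\sigma = \tfrac12$ and the horizontal edges) with $|\Im(s)|$ ranging up to $x$; thus the implied constant must be uniform in $\Im(s)$ over a range of length comparable to $x$. Your partial-summation identity
\[
\frac{1}{\zeta(s)^2} - \sum_{n\le x} h_1(n)n^{-s} = -\frac{H_1(x)}{x^s} + s\int_x^\infty \frac{H_1(t)}{t^{s+1}}\,dt
\]
is fine, and the first term is bounded by $x^{1/2+\ep/2-\sigma}$ uniformly in $t=\Im(s)$. But the tail integral, estimated by the absolute bound $|H_1(t)|\ll t^{1/2+\ep/2}$, gives $\ll |s|\,x^{1/2+\ep/2-\sigma}$, and the factor $|s|\asymp |\Im(s)|$ cannot be absorbed into $x^{\ep}$ when $|\Im(s)|$ is of order $x$. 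You explicitly acknowledge this by writing ``$|s|\le\sigma_0$ on the real interval under consideration'', which confirms the argument only covers $s$ real (or $|\Im(s)|$ up to $x^{O(\ep)}$). Further integration by parts does not repair this: each step trades another power of $|s|$ for one power of $t^{-1}$ with matching loss. The paper instead applies the truncated Perron formula and contour shift directly to the finite Dirichlet polynomial $\sum_{n\le x}h_1(n)n^{-s}$ \emph{in an auxiliary variable} $w$ (writing it as a Mellin-type integral of $\zeta(s+w)^{-2}x^w/w$ with $s$ held fixed), so the only dependence on $\Im(s)$ enters through the RH bound on $\zeta^{-1}$ at height $\lesssim |\Im(s)|+T$ and is a harmless $T^\ep$-type factor after choosing $T$ polynomially in $x$. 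Your preliminary bound $H_1(x)\ll_\delta x^{1/2+\delta}$ under RH is correct (and is essentially the $s=0$ case of the paper's argument), but the second step via partial summation loses a factor of $|s|$ that the application cannot afford.
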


\begin{proof}
Recall that $\sum_{n=1}^{\infty} h_1(n)n^{-s}=\zeta(s)^{-2}$ for $\sigma>1$. Assume that $\frac{1}{2}+\ep\leq \sigma \leq \sigma_0$. By Lemma~\ref{lem:h1prepre}, we have $|h_1(n)n^{-s}|<\tau(n)n^{-1/2}\ll n^{-1/2+\ep}$. Thus, a truncated Perron's formula~\cite[Corollary 5.3]{MV07} implies that
\begin{equation}\label{eq:partial}
\sum_{n\leq x} h_1(n)n^{-s}=\frac{1}{2\pi i}\int_{2-iT}^{2+iT} \frac{1}{\zeta(s+w)^2}\frac{x^w}{w} \,dw +R(x),        
\end{equation}
where
\begin{equation}\label{eq:partial3}
R(x) \ll\sum_{\substack{x/2<n<2x \\n \neq x}} |h_1(n)n^{-s}| \min \biggl(1, \frac{x}{T|x-n|} \biggr)+ \frac{4^{2}+x^{2}}{T} \sum_{n=1}^{\infty} \frac{|h_1(n)n^{-s}|}{n^{2}}\ll \frac{x^2}{T}
\end{equation}
(we may assume that~$x$ is an integer).
Let $\ep'=\ep/(\sigma_0+\frac52)$. We shift the contour integral in equation~\eqref{eq:partial} leftwards from $\Re(w)=2$ to $\Re(w)=\frac{1}{2}-\sigma+\ep'$, noting that the only pole of ${x^w}/{w}{\zeta(s+w)^2}$ inside the contour is the simple pole at $w=0$. Thus
\begin{multline}
\frac{1}{2\pi i}\int_{2-iT}^{2+iT} \frac{1}{\zeta(s+w)^2}\frac{x^w}{w} \,dw \\
=\frac{1}{\zeta(s)^2}+\frac{1}{2\pi i}\bigg(\int_{\frac{1}{2}-\sigma+\ep'+iT}^{2+iT} + \int_{\frac{1}{2}-\sigma+\ep'-iT}^{\frac{1}{2}-\sigma+\ep'+iT} + \int_{2-iT}^{\frac{1}{2}-\sigma+\ep'-iT}\bigg) \frac{1}{\zeta(s+w)^2}\frac{x^w}{w} \,dw.
\label{eq:partial2}
\end{multline}
Since $\zeta(z)^{-1}\ll_{\ep'} T^{\ep'}$ holds uniformly for $\frac{1}{2}+\ep' \leq \Re(z) \leq 2$ and $|\Im(z)|\leq T$ (see~\cite[Theorem~13.23]{MV07}), it follows that
\begin{align*}
\int_{\frac{1}{2}-\sigma+\ep'-iT}^{\frac{1}{2}-\sigma+\ep'+iT} \frac{1}{\zeta(s+w)^2}\frac{x^w}{w} \,dw &\ll_{\ep'} x^{1/2-\sigma+\ep}T^{\ep'} \\
\int_{\frac{1}{2}-\sigma+\ep' \pm iT}^{2 \pm iT} \frac{1}{\zeta(s+w)^2}\frac{x^w}{w} \,dw &\ll_{\ep'} \sigma T^{\ep'} \cdot \frac{x^2}{T} \ll_{\ep', \sigma_0} \frac{x^{2}}{T^{1-\ep'}}.
\end{align*}
Combining these estimates with equations~\eqref{eq:partial}--\eqref{eq:partial2}, 
and setting $T=x^{3/2+\sigma}$,
we conclude that
$$
\frac{1}{\zeta(s)^2}-\sum_{n<x} h_1(n)n^{-s}\ll_{\ep',\sigma_0} x^{1/2-\sigma+\ep'} T^{\ep'}+\frac{x^{2}}{T^{1-\ep'}}+\frac{x^2}{T} \ll_{\ep',\sigma_0} x^{1/2-\sigma+{} \ep'(\sigma+5/2)}{}\leq x^{1/2-\sigma+ \ep}
$$
as required.
\end{proof}

\begin{prop}\label{prop:conv-1RH}
Let $k \geq 2$. Assuming RH, $E_{g_k}(x)\ll_{\ep} x^{1/(k+1)+\ep}$ for each $\ep>0$.
\end{prop}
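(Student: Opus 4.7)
The plan is to mimic Proposition~\ref{prop:conv-1}, using the conditional Lemma~\ref{lem:poly} in place of the unconditional Lemma~\ref{lem:h1} at every step where the tail of $\sum h_1(n)/n^s$ enters. Starting from $g_k=h_k\ast 1$ and using that $h_k$ is supported on perfect $k$th powers, I will write
\[
F_{g_k}(x)=\sum_{n\le x^{1/k}}h_1(n)\lfloor x/n^k\rfloor = x\sum_{n\le x^{1/k}}\frac{h_1(n)}{n^k}-\sum_{n\le x^{1/k}}h_1(n)\{x/n^k\}.
\]
Lemma~\ref{lem:poly} applied with $s=k$ gives $\sum_{n\le X}h_1(n)/n^k=1/\zeta(k)^2+O(X^{1/2-k+\ep})$, and substituting $X=x^{1/k}$ converts the first sum on the right into $x/\zeta(k)^2+O(x^{1/(2k)+\ep})$, safely inside the target. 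The proof thus reduces to bounding
\[
T(x):=\sum_{n\le x^{1/k}}h_1(n)\{x/n^k\}\ll_{\ep}x^{1/(k+1)+\ep}.
\]

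For $T(x)$ the plan is to transplant Montgomery and Vaughan's analysis of the $k$-free error $R_k(x)$ to the sequence $h_1=\mu\ast\mu$. Writing $\{u\}=\tfrac12+\psi(u)$ for the sawtooth $\psi$, the constant term contributes $\tfrac12 H_1(x^{1/k})\ll x^{1/(2k)+\ep}$, where the RH-conditional estimate $H_1(y)\ll y^{1/2+\ep}$ follows from Perron's formula applied to $1/\zeta(s)^2$, shifting the contour to $\sigma=\tfrac12+\ep$ and invoking the bounds on $1/\zeta(s)^2$ in the critical strip that are proved in Lemma~\ref{lem:poly}. For the oscillatory part $\sum_{n\le x^{1/k}}h_1(n)\psi(x/n^k)$, I will apply Vaaler's truncated Fourier expansion
\[
\psi(u)=-\sum_{0<|h|\le H}c_h\,e(hu)+O\!\left(\min\!\left(1,\tfrac{1}{H\|u\|}\right)\right),\qquad |c_h|\ll 1/|h|,
\]
which reduces the problem to bounding the exponential sums
\[
S(h):=\sum_{n\le x^{1/k}}h_1(n)\,e(hx/n^k).
\]
Each $S(h)$ is to be estimated by partial summation using the RH bound $H_1(t)\ll t^{1/2+\ep}$, then the Vaaler weights $c_h$ are summed over $|h|\le H$ and $H$ is optimized against the Vaaler error; the expected output is a bound $\sum_n h_1(n)\psi(x/n^k)\ll x^{1/(2k)+\ep}+x^{1/(k+1)+\ep}\ll x^{1/(k+1)+\ep}$, which proves the proposition.

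The main obstacle is the exponential-sum step, which is the arithmetic heart of the argument. The naive alternative---bounding $|h_1(n)|\le\tau(n)$ and combining with a Dirichlet-hyperbola tail estimate on $\sum_{n>D}h_1(n)\lfloor x/n^k\rfloor$ (swapped to $\sum_{m\le x/D^k}(H_1((x/m)^{1/k})-H_1(D))$ and controlled by $H_1\ll y^{1/2+\ep}$)---only produces $T(x)\ll x^{2/(2k+1)+\ep}$ after optimizing $D$, which is strictly weaker than the target $x^{1/(k+1)+\ep}$. The savings from $2/(2k+1)$ down to $1/(k+1)$ come entirely from Vaaler smoothing together with the RH mean-value $H_1(t)\ll t^{1/2+\ep}$, in direct analogy with Montgomery and Vaughan's classical treatment of $R_k(x)$; the transplantation from their coefficients $\mu(n)$ to ours $(\mu\ast\mu)(n)$ works because Lemma~\ref{lem:poly} supplies for $1/\zeta(s)^2$ exactly the critical-strip analytic control that the Mertens estimate $M(y)\ll y^{1/2+\ep}$ provides for $1/\zeta(s)$.
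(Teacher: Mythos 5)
Your reduction to the fractional-part sum $T(x)=\sum_{n\le x^{1/k}}h_1(n)\{x/n^k\}$ is fine, as is the treatment of the main term via Lemma~\ref{lem:poly}. But the plan then stalls at the one place that actually matters: the exponential sums $S(h)=\sum_{n\le x^{1/k}}h_1(n)\,e(hx/n^k)$. You propose to bound them ``by partial summation using the RH bound $H_1(t)\ll t^{1/2+\ep}$,'' but partial summation only exploits cancellation in $h_1(n)$, not the oscillation of the exponential, and here the derivative of the phase $hx/t^k$ is of size $khx\,t^{-k-1}$, which is huge for small $t$. Concretely, partial summation gives
\[
S(h)\ll x^{1/(2k)+\ep}+hx\int_1^{x^{1/k}}t^{-k-1/2+\ep}\,dt\ll hx,
\]
which is useless. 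A smarter split (trivial bound for $n\le D$, partial summation for $n>D$, optimize $D$) still only yields $S(h)\ll(hx)^{2/(2k+1)+\ep}$, and summing this against Vaaler weights $|c_h|\ll1/h$ over $|h|\le H$ never beats $x^{2/(2k+1)+\ep}$ --- exactly the exponent you had already identified as the wall from the naive hyperbola estimate. You have not shown how to go below it. You also haven't addressed the Vaaler remainder $\sum_n|h_1(n)|\min(1,1/(H\|x/n^k\|))$, which with divisor-bounded coefficients is itself a nontrivial lattice-point problem. In short, the exponential-sum heart of your argument is asserted, not proved, and the stated method cannot prove it.

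There is also a conceptual misattribution driving the plan. The exponent $1/(k+1)$ for $k$-free numbers under RH does \emph{not} come from Vaaler smoothing plus a mean-value bound; that machinery (Vaaler, van der Corput, Vaughan's identity to split $\mu$ into Type~I/II pieces) is what powers the \emph{improvements} below $1/(k+1)$ (to $9/28$ and beyond for $k=2$). What gets you to $1/(k+1)$ --- and what the paper actually does, following Montgomery--Vaughan --- is simpler: truncate $1/\zeta(ks)^2$ at $y^k$ with $y=x^{1/(k+1)}$, handle the initial segment by the elementary hyperbola computation (exactly your main-term step), and apply a truncated Perron formula to the convolution $\widetilde g=\widetilde h\ast1$ corresponding to the tail, shifting the contour to $\sigma=\tfrac12$. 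On that line the Lindel\"of bound $\zeta(s)\ll x^\ep$ (a consequence of RH) controls the zeta factor, and Lemma~\ref{lem:poly} gives $A(ks)\ll y^{(1-k)/2+\ep}$ for the truncated tail; the product $x^{1/2+\ep}y^{(1-k)/2+\ep}$ is exactly $x^{1/(k+1)+\ep}$. This replaces your exponential-sum step entirely and is the idea your proposal is missing.
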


\begin{proof}
We adapt the proof that Montgomery and Vaughan~\cite{MV81} used for the indicator function of $k$-free numbers. Recall that $h_k(n)=h_1(m)$ if $n=m^k$ is a perfect $k$th power and $h_k(n)=0$ otherwise. Let $y=x^{1/(k+1)}$, and define
$$
A(s)=\frac{1}{\zeta(s)^2}-\sum_{n \leq y} h_1(n)n^{-s}.
$$
If we set $\widetilde{h}(n)=h_k(n)$ for $n>y^k$ and $\widetilde{h}(n)=0$ otherwise, we see that the Dirichlet series for~$\widetilde{h}$ is precisely~$A(ks)$.
Define~$\widetilde{g}$ to be the Dirichlet convolution of~$\widetilde{h}$ and~$1$, so that the Dirichlet series for~$\widetilde{g}$ is $\zeta(s)A(ks)$.
Also note that $|\widetilde{g}(n)|\leq \sum_{d\mid n}|\widetilde{h}(d)|\leq \sum_{d\mid n}\tau(d)\ll_{\ep} n^\ep$ for each $\ep>0$ by Lemma~\ref{lem:h1prepre}. Using the truncated Perron's formula~\cite[Corollary 5.3]{MV07} with the choice $T=x$,
\begin{equation}\label{eq:S2}
\sum_{n \leq x} \widetilde{g}(n)=\frac{1}{2\pi i}\int_{c-ix}^{c+ix} \zeta(s)A(ks) \frac{x^s}{s} \,ds +R(x).        
\end{equation}
where $c=1+1/(k+1)$ and
\begin{equation}\label{eq:S6}
R(x) \ll\sum_{\substack{x/2<n<2x \\n \neq x}} |\widetilde{g}(n)| \min \biggl(1, \frac{x}{x|x-n|} \biggr)+ \frac{4^{c}+x^{c}}{x} \sum_{n=1}^{\infty} \frac{|\widetilde{g}(n)|}{n^{c}}\ll x^\ep + x^{1/(k+1)}.
\end{equation}
We shift the contour leftwards from $\sigma=c$ to $\sigma=\frac{1}{2}$ and notice that the only pole of $\zeta(s)A(ks)$ inside the contour is the simple pole at $s=1$. Thus,
\begin{align}\label{eq:S3}
&\frac{1}{2\pi i}\int_{c-ix}^{c+ix} \zeta(s)A(ks) \frac{x^s}{s}=xA(k)+\frac{1}{2\pi i}\bigg(\int_{\frac{1}{2}-ix}^{\frac{1}{2}+ix}+\int_{\frac{1}{2}+ix}^{c+ix}+\int_{c-ix}^{\frac{1}{2}-ix}\bigg) \zeta(s)A(ks) \frac{x^s}{s} \,ds.
\end{align}
By \cite[Theorem 13.18]{MV07}, $\zeta(s)\ll_{\ep} x^\ep$ uniformly for $\sigma\geq \frac{1}{2}$ and $1\leq |t|\leq x$. By Lemma~\ref{lem:poly}, $A(s)\ll_{\ep} y^{1/2-\sigma+\ep}$ holds uniformly for $\frac{1}{2}+\ep \leq \sigma \leq k+2$, and thus $A(ks)\ll_{\ep} y^{1/2-k\sigma+\ep}$ holds uniformly for $\frac{1}{2}+\ep \leq \sigma \leq c$. Therefore
\begin{align*}
\int_{\frac{1}{2}-ix}^{\frac{1}{2}+ix} \zeta(s)A(ks) \frac{x^s}{s} &\ll_{\ep} x^{1/2+\ep} y^{(1-k)/2+\ep}, \\
\int_{\frac{1}{2}\pm ix}^{c\pm ix} \zeta(s)A(ks) \frac{x^s}{s} &\ll_{\ep} x^{c-1+\ep} y^{(1-k)/2+\ep}.
\end{align*}
Combining these estimates with equations~\eqref{eq:S2}--\eqref{eq:S3}, we conclude that
\begin{equation}\label{eq:S5}
\sum_{n \leq x} \widetilde{g}(n)=xA(k)+O_{\ep}\big(x^{1/(k+1)}+x^{1/2+\ep} y^{(1-k)/2+\ep}\big).      
\end{equation}

On the other hand,
\begin{align*}
\sum_{mn \leq x} \bigl( h_k(n)-\widetilde{h}(n) \bigr)
&=\sum_{\substack{mn \leq x\\n\leq y^k}} h_k(n)
=\sum_{\substack{mn^k \leq x\\n \leq y}} h_1(n)
=\sum_{n \leq y} h_1(n) \biggl\lfloor \frac{x}{n^k} \biggr\rfloor \\
&= x \sum_{n \leq y} \frac{h_1(n)}{n^k}+O \biggl( \sum_{n\le y} |h_1(n)| \biggr)
=x \biggl( \frac{1}{\zeta(k)^2} -A(k)\biggr) + O(y\log y)
\end{align*}
by Lemmas~\ref{lem:poly} and~\ref{lem:h1pre}.
From this estimate and equation~\eqref{eq:S5}, it follows that
\begin{align*}
F_{g_k}(x) 
&= \sum_{n \leq x} g_k(n)=\sum_{mn \leq x} h_k(n)\\
&=\sum_{mn \leq x} \big(h_k(n)-\widetilde{h}(n)\big)+\sum_{mn \leq x} \widetilde{h}(n) \\
&=\sum_{mn \leq x} \big(h_k(n)-\widetilde{h}(n)\big)+\sum_{n \leq x}\widetilde{g}(n) \\
&=\frac{x}{\zeta(k)^2}+O_{\ep}\bigl( y\log y+ x^{1/(k+1)}+ x^{1/2+\ep} y^{(1-k)/2+\ep}\bigr).  
\end{align*}
Since $y=x^{1/(k+1)}$, this error term is $O_{\ep}\bigl( x^{1/(k+1)+\ep} \bigr)$
and therefore
$$
E_{g_k}(x) = F_{g_k}(x) - G_{g_k}(x) \ll_{\ep} x^{1/(k+1)+\ep}
$$
as required.
\end{proof}

\subsection{The convolution method}\label{sec:conv}

In this section, we apply the convolution method to prove both parts of Theorem~\ref{thm:unconditional} as well as Theorem~\ref{thm:ubRH}(b). The following key lemma is inspired by~\cite[Lemma~1]{BG58}.

\begin{lem}\label{lem:conv}
 Let
$Y(s) = \sum_{n=0}^{\infty} y_n n^{-s}$ and $Z(s) = \sum_{n=0}^{\infty} z_n n^{-s}$ be two Dirichlet series; assume that $Y(s)$ converges absolutely for $\sigma>1$ and that~$k$ is a positive integer such that $Z(s)$ converges absolutely for $\sigma>\frac{1}{k+1}$. Suppose that the partial sum $T(x) = \sum_{n \leq x} y_n$ has the form 
\begin{equation} \label{all simple}
T(x) = \sum_{j=1}^{k} \Res\biggl(Y(s) \cdot \frac{x^s}{s}, \frac{1}{j} \biggr)+R(x) = \sum_{j=1}^k r_j x^{1/j}+R(x).
\end{equation}
Assume further that there is $f \in \cF$ such that
$D_f(s) =Y(s)Z(s)$ for $\sigma>1$. 
\begin{enumerate}
\item If $R(x) \ll 1$, then $E_f(x) \ll_{\ep} x^{1/(k+1)+\ep}$ for every $\ep>0$.
\item If $\theta>\frac{1}{k+1}$ is a real number, and $\widetilde{R}(x)$ is an eventually increasing function that satisfies $R(x)\ll x^{\theta}\widetilde{R}(x)$, then $E_f(x) \ll x^{\theta} \widetilde{R}(x)$.
\end{enumerate}
\end{lem}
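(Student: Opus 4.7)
The plan is to apply the Dirichlet hyperbola method to the convolution identity implied by $D_f(s) = Y(s)Z(s)$. Writing $f$ as the Dirichlet convolution $y \ast z$ gives
\[
F_f(x) = \sum_{n \le x} z_n T(x/n),
\]
so that substituting the decomposition~\eqref{all simple} yields
\[
F_f(x) = \sum_{j=1}^k r_j x^{1/j} \sum_{n \le x} \frac{z_n}{n^{1/j}} + \sum_{n \le x} z_n R(x/n).
\]

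The next step is to complete each inner sum. Since $Z(s)$ converges absolutely for $\sigma > \tfrac{1}{k+1}$ and $\tfrac1j \ge \tfrac1k > \tfrac1{k+1}$ for $1 \le j \le k$, the tail estimate
\[
\sum_{n > x} \frac{|z_n|}{n^{1/j}} \le x^{-1/j + 1/(k+1) + \delta}\sum_{n=1}^\infty \frac{|z_n|}{n^{1/(k+1)+\delta}} \ll_\delta x^{-1/j + 1/(k+1) + \delta}
\]
(valid for any $\delta>0$) gives $\sum_{n\le x} z_n/n^{1/j} = Z(1/j) + O_\delta(x^{-1/j + 1/(k+1) + \delta})$. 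A routine residue calculation identifies $r_j Z(1/j) x^{1/j}$ with $\Res\bigl(D_f(s) x^s/s, 1/j\bigr)$ for these $j$, while the residues appearing in the definition of $G_f(x)$ at $s=1/j$ for $k+1 \le j \le 2\ell$ each contribute only $O(x^{1/(k+1)})$ up to logarithmic factors. Consequently,
\[
F_f(x) - G_f(x) = O_\delta(x^{1/(k+1)+\delta}) + \sum_{n \le x} z_n R(x/n).
\]

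It remains to bound the $R$-sum. In case (a), the assumption $R(x) \ll 1$ reduces this to bounding $\sum_{n \le x} |z_n|$, which by a tail argument identical to the one above is $\ll_\delta x^{1/(k+1)+\delta}$, giving $E_f(x) \ll_\delta x^{1/(k+1)+\delta}$. In case (b), I would split the sum according to whether $x/n$ exceeds a fixed constant (the small-$x/n$ range contributing only $O(x^{1/(k+1)+\delta})$); for the large-$x/n$ range, the eventual monotonicity of $\tilde R$ permits the bound $|R(x/n)| \ll (x/n)^\theta \tilde R(x)$, so
\[
\left|\sum_{n \le x} z_n R(x/n)\right| \ll x^\theta \tilde R(x) \sum_{n=1}^\infty \frac{|z_n|}{n^\theta} \ll x^\theta \tilde R(x),
\]
where the series converges because $\theta > \tfrac{1}{k+1}$; the earlier $O(x^{1/(k+1)+\delta})$ error is absorbed by choosing $\delta < \theta - \tfrac{1}{k+1}$.

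The main obstacle is the bookkeeping that matches main terms: one must verify carefully that $G_f(x)$, defined as a sum of residues up to $j = 2\ell$, agrees with the natural main term $\sum_{j=1}^k r_j Z(1/j) x^{1/j}$ produced by the hyperbola method up to the acceptable error $O(x^{1/(k+1)+\delta})$. This requires analyzing the polar structure of $D_f(s) = Y(s)Z(s)$ in the half-plane $\sigma > \tfrac{1}{k+1}$ and confirming that residues arising from poles at $s=1/j$ with $j>k$ (if any) are of order at most $x^{1/(k+1)}$ times a power of $\log x$.
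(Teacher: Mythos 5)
Your proof is correct and takes essentially the same hyperbola-method route as the paper. The only (slight) differences are in your favor: you make explicit the bookkeeping of the residues at $s = 1/j$ for $k+1 \le j \le 2\ell$ (which are $O(x^{1/(k+1)}(\log x)^{O(1)})$ and hence absorbed), and you split the $R$-sum at a fixed constant to handle the merely \emph{eventual} monotonicity of $\widetilde R$ in part~(b); both of these the paper's proof leaves implicit.
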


\begin{rem}
In addition to the dependence on~$\ep$ in part~(a), the implied constants in the above estimates on $E_f(x)$ depend on $Y(s)$ and $Z(s)$, as well as on~$\theta$ and on the implied constant in the assumed upper bound for $R(x)$ in part~(b). In the applications below, these ancillary quantities will all be chosen in terms of the function $f\in \cF$; consequently, the implied constants will simply depend on~$f$ in those cases.
\end{rem}

\begin{rem}
In part~(b), we see that the deduced upper bound for $E_f(x)$ is the same as the assumed upper bound for $R(x)$ as long as that bound is sufficiently ``nice'' with respect to~$\theta$. We will have this sort of nice upper bound in all applications of Lemma~\ref{lem:conv}(b) below.
\end{rem}

\begin{proof}[Proof of Lemma~\ref{lem:conv}]
Let $1\leq j \leq k$ and assume that $D_f(s)$ has a pole at $\frac1j$. Since $Z(s)$ converges absolutely for $\sigma>\frac{1}{k+1}$ and $D_f(s) =Y(s)Z(s)$, it follows that $\frac1j$ is a pole of $Y(s)$, which must be simple since the residues in equation~\eqref{all simple} have no logarithmic terms. We deduce that
\begin{align}
\Res\biggl(D_f(s) \cdot \frac{x^s}{s}, \frac{1}{j} \biggr)
&=Z\biggl( \frac{1}{j} \biggr)\Res\biggl(Y(s) \cdot \frac{x^s}{s}, \frac{1}{j} \biggr) =Z\biggl( \frac{1}{j} \biggr)r_jx^{1/j}. \label{eq:res}
\end{align}
By partial summation, it follows that for any $0<\ep<\frac1j-\frac1{k+1}$,
\begin{equation}\label{eq:uberror}
Z\biggl( \frac{1}{j} \biggr)-\sum_{n\leq x}z_n n^{^{-{1}/{j}}} \ll \sum_{n>x} |z_n|n^{^{-{1}/{j}}} \le x^{{1}/(k+1)-{1}/{j}+\ep} \sum_{n>x} |z_n|n^{-({1}/(k+1)+\ep)} \ll_{\ep} x^{{1}/(k+1)-{1}/{j}+\ep}.
\end{equation}
Since $(f(n))$ is the Dirichlet convolution of $(y_n)$ and $(z_n)$,
\begin{equation}\label{eq:F}
F_f(x) = \sum_{n \leq x} f(n) = \sum_{n \leq x} z_nT\biggl( \frac xn \biggr) = \sum_{j=1}^k \biggl( \sum_{n\leq x}z_n n^{-1/j} \biggr)r_j x^{1/j}+ \sum_{n \leq x} z_n R\biggl( \frac xn \biggr).
\end{equation}
In light of the definition~\eqref{eq:G} of $G_f(x)$, equations~\eqref{eq:res}--\eqref{eq:F} together imply that
$$
E_f(x) =F_f(x)-G_f(x)\ll_{\ep} x^{1/(k+1)+\ep}+\sum_{n \leq x} |z_n| \biggl| R\biggl( \frac xn \biggr) \biggr|.
$$
This bound implies both parts of the lemma:
\begin{enumerate}
\item If $R(x)\ll 1$, then $\sum_{n \leq x} |z_n| |R(\frac xn)| \ll \sum_{n \leq x} |z_n|\ll x^{1/(k+1)+\ep}$ by partial summation, and thus $E_f(x) \ll_{\ep} x^{1/(k+1)+\ep}$ as required. 
\item Since $\theta>\frac{1}{k+1}$, it follows from the assumptions that
$$
\sum_{n \leq x} |z_n| \biggl| R\biggl( \frac xn \biggr) \biggr| \ll x^{\theta}\widetilde{R}(x) \sum_{n \leq x} \frac{|z_n|}{n^{\theta}} \ll x^{\theta}\widetilde{R}(x).
$$
Choosing $\ep<\theta-1/(k+1)$, we conclude that $E_f(x) \ll_{\ep} x^{1/(k+1)+\ep}+x^{\theta}\widetilde{R}(x) \ll x^{\theta}\widetilde{R}(x)$, as required.
\qedhere
\end{enumerate}
\end{proof}

Next, we give the proof of Theorem~\ref{thm:unconditional} on unconditional upper bounds on $E_f(x)$, the first half of which is extremely short.

\begin{proof}[Proof of Theorem~\ref{thm:unconditional}(a)]
Let $(y_n)$ be the indicator function of~$k$th powers, whose Dirichlet series is $Y(s) = \sum_{n=0}^{\infty} y_n n^{-s}= \zeta(ks)$ and whose summatory function is $\sum_{n \leq x} y_n=x^{1/k}+O(1)$. By Theorem~\ref{thm:algorithm}, we can write $D_f(s) =Y(s)Z(s)$, where $Z(s)$ converges absolutely for $\sigma>\frac{1}{k+1}$. Lemma~\ref{lem:conv}(a) then implies that $E_f(x) \ll_{\ep} x^{1/(k+1)+\ep}$ for each $\ep>0$.
\end{proof}

\begin{proof}[Proof of Theorem~\ref{thm:unconditional}(b)]
We divide our discussion into two cases; in each case, we will eventually apply Lemma~\ref{lem:conv}(b) with the choices
\begin{equation}\label{thetaR}
\theta \in \biggl( \frac{1}{k+1}, \frac{1}k \biggr) \quad\text{and}\quad
\widetilde{R}(x) =x^{{1}/{k}-\theta}\exp\biggl(-c\frac{(\log x)^{3/5}}{(\log \log x)^{1/5}} \biggr).
\end{equation}

\begin{enumerate}[(i)]
\item We first consider the case that~$f$ is of M\"obius-type. Let $Y(s) = \zeta(ks)^{-1}$. By Proposition~\ref{prop:typeI}, we can write $D_f(s) =Y(s)Z(s)$ for $\sigma>1$, where $Z(s)$ is a Dirichlet series that converges absolutely for $\sigma>\frac{1}{k+1}$. Let $(y_n)$ be the sequence supported on perfect~$k$th powers such that $y_{m^k}= \mu(m)$ for each nonnegative integer~$m$, so that $Y(s) = \sum_{n=0}^{\infty} y_n n^{-s}$. Note that $T(x) = \sum_{n \leq x} y_n= \sum_{m \leq x^{1/k}} \mu(m) =M(x^{1/k})$ in terms of the Mertens function. From the best known error term on~$M(x)$ (see for example~\cite[Theorem~12.7]{I85}), we see that $T(x)\ll R(x)$, where $R(x)$ is given by the right-hand side of the estimate~\eqref{eq:boundpowerfree}. Then the function $\widetilde{R}(x)$ given in equation~\eqref{thetaR} is eventually increasing and satisfies $T(x)\ll x^\theta\widetilde{R}(x)$, and so Lemma~\ref{lem:conv} implies the required upper bound on~$E_f(x)$.

\item Next we consider the case that~$f$ is of powerfree-type. By Proposition~\ref{prop:typeII}, $k$ is the smallest integer such that $\ep_k \neq 1$. If $\ep_k=0$, let $(y_n)$ be the indicator function of the~$k$-free numbers; if $\ep_k=-1$, let $y_n=g_k(n)$, where $g_k$ is defined in Definition~\ref{k and g def}. In both cases, the Dirichlet series of $(y_n)$ is $Y(s)=\sum_{n=1}^{\infty} y_n n^{-s}=\zeta(s)/\zeta(ks)^{1+|\ep_k|}$, and Proposition~\ref{prop:typeII} implies that we can write $D_f(s) =Y(s)Z(s)$ for $\sigma>1$, where $Z(s)$ converges absolutely for $\sigma>\frac{1}{k+1}$. Let $T(x) = \sum_{n \leq x} y_n= x\Res(Y(s)x^s/s,1)+R(x)$. Then the upper bound on $R(x)$ is exactly the right-hand side of the estimate~\eqref{eq:boundpowerfree}: this is from a result of Walfisz~\cite[Section~5.6]{W63} when $\ep_k=0$, and follows from Proposition~\ref{prop:conv-1} when $\ep_k=-1$. The theorem in this case follows from Lemma~\ref{lem:conv} by setting $\theta$ and $\widetilde{R}(x)$ as in equation~\eqref{thetaR}. \qedhere
\end{enumerate}
\end{proof}

We also present the proof of Theorem~\ref{thm:ubRH}(b) on upper bounds on $E_f(x)$ for $f$ of powerfree-type under RH. The proof is very similar to the proof of Theorem~\ref{thm:unconditional}(b) that just concluded.

\begin{proof}[Proof of Theorem~\ref{thm:ubRH}(b)]
As in case~(ii) in the proof of Theorem~\ref{thm:unconditional}, we let $(y_n)$ be the indicator function of~$k$-free numbers when $\ep_k=0$, and let $y_n=g_k(n)$ when $\ep_k=-1$. We can then write $D_f(s)=Y(s)Z(s)$ for $\sigma>1$, where $Y(s)$ is the Dirichlet series of the sequence $(y_n)$ and $Z(s)$ converges absolutely for $\sigma>\frac{1}{k+1}$.
We then have $T(x) = \sum_{n \leq x} y_n=x/\zeta(k)^{1+|\ep_k|}+O_{\ep}(x^{1/(k+1)+\ep})$ for each $\ep>0$: this is a result by Montgomery and Vaughan~\cite{MV81} when $\ep_k=0$, and follows from Proposition~\ref{prop:conv-1RH} when $\ep_k=-1$. 
In both cases, we have $x/\zeta(k)^{1+|\ep_k|} = \Res(Y(s)x^s/s, 1)$ and that $s=1$ is the only real pole of $Y(s)$. Thus, by setting $\theta=\frac1{k+1}+\ep$ and $\widetilde{R}(x) =1$, Lemma~\ref{lem:conv} implies that $E_f(x)\ll_{\ep} x^{1/(k+1)+\ep}$.
\end{proof}

\subsection{The M\"obius-type and powerfull-type cases under RH} \label{sec:conditional2}

In this section, we prove Theorem~\ref{thm:ubRH}(a) via the method of contour integration.  We need the following upper bounds on $\zeta(s)$ and $1/\zeta(s)$. 

\begin{lem}\label{lem:Tep}
Assume RH. 
\begin{enumerate}
    \item For each $\delta>0$, the bounds $\zeta(s) \ll_{\delta} 1$ and $\zeta(s)^{-1} \ll_{\delta} 1$ hold uniformly in the compact region $\{s\colon \frac{1}{2}\leq \sigma \leq 1-\delta,\, |t|\leq 13\}$.
    \item There is a positive constant $C$ such that both
$$
|\zeta(s)| \leq \exp\biggl(C\frac{\log x}{\log \log x}\biggr)
\quad\text{and}\quad
|\zeta(s)^{-1}| \leq \exp\biggl(C\frac{\log x}{\log \log x}\biggr)
$$
hold for $\sigma \geq \frac{1}{2}+\frac{1}{\log \log x}$ and $1700\leq |t|\leq x-4$. 
\end{enumerate}
\end{lem}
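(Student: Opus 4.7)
The plan is to handle the two parts by quite different methods: part~(a) is a direct compactness argument, while part~(b) essentially repackages a classical RH-conditional estimate from the literature.

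For part~(a), I would set $K_\delta = \{s : \tfrac{1}{2} \le \sigma \le 1-\delta,\ |t| \le 13\}$, which is a compact subset of $\mathbb{C}$. The only pole of $\zeta(s)$ is at $s=1$, and $1 > 1-\delta$, so the pole lies outside $K_\delta$. Under RH every nontrivial zero of $\zeta(s)$ has real part $\tfrac12$, and the nontrivial zero of smallest positive imaginary part satisfies $\gamma_1 = 14.1347\ldots > 13$; the trivial zeros at $-2,-4,\ldots$ are also well outside $K_\delta$. Consequently $\zeta$ is holomorphic and nowhere vanishing on $K_\delta$, so the continuous functions $|\zeta(s)|$ and $|1/\zeta(s)|$ each attain finite maxima on $K_\delta$ depending only on $\delta$, which gives both inequalities.

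For part~(b), I would invoke the standard RH-conditional upper bounds on $\log|\zeta(s)|$: via the Borel--Carath\'eodory lemma applied to $\log \zeta$ in combination with the conditional zero-free half-plane $\sigma \ge \tfrac12$ (see, e.g., Titchmarsh's \emph{Theory of the Riemann Zeta-Function}, Chapter~14, or Montgomery--Vaughan's \emph{Multiplicative Number Theory~I}, Section~13.2), one has
\[
\bigl|\log|\zeta(s)|\bigr| \ll \frac{(\log|t|)^{2-2\sigma}}{\log\log|t|} + \log\log|t|
\]
uniformly for $|t|$ bounded away from zero and $\sigma \ge \tfrac12 + \tfrac{1}{\log\log|t|}$. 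In the range $\sigma \ge \tfrac12 + \tfrac{1}{\log\log x}$ and $13 \le |t| \le x-4$, we have $2 - 2\sigma \le 1$, so $(\log|t|)^{2-2\sigma} \le \log|t| \le \log x$, and likewise $\log\log|t| \le \log\log x$; the right-hand side is therefore $O(\log x/\log\log x)$. Exponentiating produces both inequalities of part~(b) with a suitable absolute constant $C$ (the bound on $|\zeta(s)|$ coming from the upper bound on $\log|\zeta(s)|$, and the bound on $|1/\zeta(s)|$ coming from the upper bound on $-\log|\zeta(s)| = \log|1/\zeta(s)|$).

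The main obstacle, such as it is, lies in part~(b): one must locate the correct form of the classical conditional estimate and verify that the parameter ranges specified in the lemma (the thresholds $x \ge \exp(\exp(10))$ and $|t| \ge 13$, together with the shift $1/\log\log x$ in $\sigma$) sit comfortably inside the range of validity of the cited theorem. Once that alignment is made, the deduction is routine; no new analytic input beyond the classical RH theory of $\zeta$ is required.
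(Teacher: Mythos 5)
Your part~(a) is fine and is essentially the paper's argument: the region is compact, the pole at $s=1$ is excluded by $\sigma\leq 1-\delta$, and the zeros are excluded by $|\gamma_1|>13$, so $\zeta$ and $1/\zeta$ are continuous and hence bounded. (As the paper notes, RH is not actually needed here.)

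Part~(b), however, contains a genuine gap, and it sits exactly at the spot you flag as the ``main obstacle''. The classical conditional estimate you invoke has $\sigma \geq \frac12 + \frac{1}{\log\log|t|}$ as its range of validity, but the lemma you are proving posits only $\sigma \geq \frac12 + \frac{1}{\log\log x}$. Since $|t|\leq x-4 < x$, one has $\log\log|t| < \log\log x$, hence $\frac{1}{\log\log|t|} > \frac{1}{\log\log x}$, so the lemma's hypothesis permits values of $\sigma$ that lie \emph{strictly below} $\frac12 + \frac{1}{\log\log|t|}$. In other words, when $|t|$ is small relative to $x$ (say $|t|$ near $13$), there is a nonempty band $\frac12 + \frac1{\log\log x}\leq \sigma < \frac12 + \frac1{\log\log|t|}$ in which your cited estimate simply does not apply, and your write-up is silent there. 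For the upper bound on $|\zeta(s)|$ this is harmless: the relevant theorem (in Montgomery--Vaughan this is Theorem~13.18) in fact holds down to $\sigma\geq\frac12$, so one just needs to cite the result in that wider range. But for $|1/\zeta(s)|$ the restriction $\sigma\geq\frac12+\frac1{\log\log|t|}$ is essential to the form of the bound you quote, and the range must be handled differently.

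The paper's proof addresses precisely this by a case split for $1/\zeta$. When $\sigma\geq\frac12+\frac1{\log\log\tau}$ (with $\tau=|t|+4$), it applies the first part of~\cite[Theorem~13.23]{MV07} and uses that $g(y)=\frac{\log y}{\log\log y}$ is increasing to compare to $g(x)$. When instead $\frac12 < \sigma\leq\frac12+\frac1{\log\log\tau}$, it applies the second part of~\cite[Theorem~13.23]{MV07}, which yields a bound involving the logarithmically worse factor $\log\frac{e}{(2\sigma-1)\log\log\tau}$; the hypothesis $\sigma\geq\frac12+\frac1{\log\log x}$ controls $(2\sigma-1)^{-1}$ by $\frac12\log\log x$, giving a bound of the form $\exp(Ch(\tau))$ with $h(y)=\frac{\log y}{\log\log y}\log\frac{e\log\log x}{\log\log y}$. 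The paper then proves that $h(y)$ is increasing on $[17,x]$ and satisfies $h(x)=g(x)$, which lets one replace $h(\tau)$ by $g(x)$ and close the argument. This monotonicity calculation for $h$ is the new idea your proposal is missing; without it (or some substitute), the estimate in the small-$\sigma$ band is not obtained.
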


\begin{proof}
Part~(a) is immediate: since all nontrivial zeros $\rho$ of $\zeta$ have imaginary part at least~$14$, both $\zeta(s)$ and $\zeta(s)^{-1}$ are continuous in the compact region $\{s\colon \frac{1}{2}\leq \sigma \leq 1-\delta, |t|\leq 13\}$, and the statement follows (indeed without even invoking RH).

As for part~(b), let $x\geq \exp(\exp(10))$ be fixed. Consider the two functions
$$
g(y)=\frac{\log y}{\log \log y}
\quad\text{and}\quad
h(y)=\frac{\log y}{\log \log y} \log \frac{e \log \log x}{\log \log y}.
$$
Note that $\log g(y)=\log \log y-\log \log \log y$ and so
\[
\frac{g'(y)}{g(y)}=\frac{1}{y\log y}-\frac{1}{y\log y \log \log y}=\frac{1}{y\log y} \biggl(1-\frac{1}{\log \log y}\biggr)>0
\]
when $y \geq e^e$. Thus $g(y)$ is increasing when $y \geq 16$, and in particular $g(y)\leq g(x)$ when $16 \leq y \leq x$. Similarly, 
$$\frac{h'(y)}{h(y)}=\frac{1}{y\log y}-\frac{1}{y\log y \log \log y}-\frac{1}{y\log y} \frac{1}{\log \log y} \cdot \log \frac{e\log \log x}{\log \log y}. $$
When $1700\leq y \leq x$, we have
$$
\frac{1}{\log \log y}+\frac{1}{\log \log y \cdot \log \frac{e\log \log x}{\log \log y}}\leq \frac{2}{\log \log y}<0.997<1;
$$
so $h(y)$ is increasing when $1700\leq y \leq x$, and in particular $h(y)\leq h(x)=g(x)$ when $1700\leq y \leq x$.

Write $\tau=|t|+4$ so that bounds involving $\log\tau$ are valid even when~$t$ is small. By~\cite[Theorem~13.18]{MV07}, there is an absolute positive constant $C$ such that
$$
|\zeta(s)| \leq \exp\biggl(C\frac{\log \tau}{\log \log \tau}\biggr)=\exp(Cg(\tau))
$$
for $\sigma \geq \frac{1}{2}$ and $|t|\geq 1$. Thus, when $\sigma \geq \frac{1}{2}$ and $1\leq |t|\leq x-4$, we have $|\zeta(s)|\leq \exp(Cg(\tau))\leq \exp(Cg(x))$. 

To bound $|1/\zeta(s)|$, we consider two cases. When $\sigma \geq \frac{1}{2}+\frac{1}{\log \log \tau}$, we apply a similar argument as above by using the first part of \cite[Theorem~13.23]{MV07}. Next assume $\frac{1}{2}<\sigma \leq \frac{1}{2}+\frac{1}{\log \log \tau}$ and $1700\leq |t|\leq x-4$. In this case, by the second part of \cite[Theorem~13.23]{MV07}, there is an absolute positive constant $C$ such that
$$
|1/\zeta(s)| \leq \exp(Ch(\tau))\leq \exp(Ch(x))=\exp(Cg(x)),
$$
as required.
\end{proof}

We conclude the paper with a proof of Theorem~\ref{thm:ubRH}(a).
\begin{proof}[Proof of Theorem~\ref{thm:ubRH}(a)]
Let $f \in \cF$ be fixed and let~$k$ be its initial index.
By the discussion in Section~\ref{sec:omgea}, for $\sigma>1$, we can write
\begin{equation}\label{eq:factor}
D_f(s) =W(s) \cdot \prod_{j=k}^{2k-1} \zeta(js)^{a_j},
\end{equation}
where $a_j$ are integers and $W(s)$ converges absolutely for $\sigma>\frac{1}{2k}$. 

Let $x>\exp(\exp(10))$ be sufficiently large and set
$$
\sigma_1=\frac{1}{k} \biggl(\frac{1}{2}+\frac{1}{\log \log x}\biggr), \quad T=x^{1-\sigma_1},
$$
so that $k \sigma_1=\frac{1}{2}+\frac{1}{\log \log x}$; note that $(2k-1)\sigma_1<1$ and $2k(T+4)<x$. Let $C_0$ be the positive constant from Lemma~\ref{lem:Tep}(b). Let $B=\sum_{j=k}^{2k-1} |a_j|$ and let $C_1=BC_0$. 

Since $|f(n)|\leq 1$ for all $n \in \N$, we may assume without loss of generality that $x$ is a positive integer in the following discussion. Since $D_f(s)$ is analytic for $\sigma>1$, we set $\sigma_0=1+\frac{1}{\log x}$. By the truncated Perron's formula~\cite[Corollary 5.3]{MV07},
$$
F_f(x) = \sum_{n \leq x} f(n) = \frac{1}{2\pi i} \int_{\sigma_0-iT}^{\sigma_0+iT} D_f(s) \frac{x^s}{s} \,ds+ R(x)
$$
where
$$
R(x) \ll\sum_{\substack{x/2<n<2x \\n \neq x}} |f(n)| \min \biggl(1, \frac{x}{T|x-n|} \biggr)+ \frac{4^{\sigma_0}+x^{\sigma_0}}{T} \sum_{n=1}^{\infty} \frac{|f(n)|}{n^{\sigma_0}}.
$$
Therefore
$$
R(x)\ll \frac{x\log x}{T}+\frac{x \zeta(\sigma_0)}{T} \ll \frac{x\log x}{T}.
$$

We shift the contour leftwards from $\sigma= \sigma_0$ to $\sigma= \sigma_1$. Define the integrals
\begin{align*}
&
I_1(x) = \frac{1}{2\pi i} \int_{\sigma_0-iT}^{\sigma_0+iT} D_f(s) \frac{x^s}{s} \,ds,  \quad I_2(x) = \frac{1}{2\pi i} \int_{\sigma_0+iT}^{\sigma_1+iT} D_f(s) \frac{x^s}{s} \,ds, \\
& I_3(x) = \frac{1}{2\pi i} \int_{\sigma_1+iT}^{\sigma_1-iT} D_f(s) \frac{x^s}{s} \,ds,\quad 
I_4(x) = \frac{1}{2\pi i} \int_{\sigma_1-iT}^{\sigma_0-iT} D_f(s) \frac{x^s}{s} \,ds.
\end{align*}
Since $k\sigma_1>\frac{1}{2}$ and $W(s)$ is analytic for $\sigma>\sigma_1$, the only possible poles of the contour integral $I_1+I_2+I_3+I_4$ come from $s= \frac{1}{k}, \frac{1}{k+1}, \ldots, \frac{1}{2k-1}$. Thus, the residue theorem implies that
$$
I_1(x)+I_2(x)+I_3(x)+I_4(x) = \sum_{j=k}^{2k-1} \Res\biggl(D_f(s) \frac{x^s}{s}, \frac{1}{j} \biggr).
$$

We first bound $I_3(x)$. For $\sigma= \sigma_1$ and for each $k \leq j \leq 2k-1$, since $\frac{1}{2}+\frac{1}{\log \log x}\leq j \sigma_1 \leq (2k-1)\sigma<1$ and $j|t|+4<2k(T+4)<x$, it follows from Lemma~\ref{lem:Tep} that $\zeta(js)^{a_j} \ll \exp(|a_j|C_0\frac{\log x}{\log \log x})$. Thus, for $\sigma= \sigma_1$, we have 
$$
D_f(s) =W(s) \cdot \prod_{j=k}^{2k-1} \zeta(js)^{a_j} \ll \prod_{j=k}^{2k-1} \zeta(js)^{a_j} \ll \prod_{j=k}^{2k-1} \exp\biggl(|a_j|C_0\frac{\log x}{\log \log x}\biggr)= \exp\biggl(C_1\frac{\log x}{\log \log x}\biggr).
$$
It follows that
$$
I_3(x)\ll \int_{-T}^{T} \exp\biggl(C_1\frac{\log x}{\log \log x}\biggr) \frac{x^{\sigma_1}}{|\sigma_1+t|} \,dt \ll x^{\sigma_1} \exp\biggl(C_2\frac{\log x}{\log \log x}\biggr),
$$
where $C_2=C_1+1$.

Next we estimate $I_2(x)$. Let $k \leq j \leq 2k-1$ be fixed. When $\sigma \geq \sigma_1$, we have $j\sigma \geq k\sigma_1\geq \frac{1}{2}+\frac{1}{\log \log x}$. Thus, Lemma~\ref{lem:Tep} implies that
$$
\zeta\bigl(j(\sigma+iT)\bigr) \ll \exp\biggl(C_0\frac{\log x}{\log \log x}\biggr)
\quad\text{and}\quad
\zeta\bigl(j(\sigma+iT)\bigr)^{-1} \ll \exp\biggl(C_0\frac{\log x}{\log \log x}\biggr)
$$
uniformly for $\sigma\geq \sigma_1$, and thus that $D(\sigma+ix) \ll \exp(C_2\frac{\log x}{\log \log x})$ holds uniformly for $\sigma_1 \leq \sigma \leq \sigma_0$. Therefore
$$
I_2(x)\ll \int_{\sigma_1}^{\sigma_0} \exp\biggl(C_2\frac{\log x}{\log \log x}\biggr) \frac{x^{\sigma}}{T} \,d\sigma \ll \frac{x^{\sigma_0}}{x} \exp\biggl(C_2\frac{\log x}{\log \log x}\biggr)\ll \frac{x}{T} \exp\biggl(C_2\frac{\log x}{\log \log x}\biggr).
$$
A similar argument provides the same upper bound for $I_4(x)$.

We have shown that
\begin{align*}
F_f(x)-\sum_{j=k}^{2k-1} \Res\biggl(D_f(s) \frac{x^s}{s}, \frac{1}{j} \biggr) &\ll R(x)+I_2(x)+I_3(x)+I_4(x) \\
&\ll \frac{x\log x}{T}+\biggl(x^{\sigma_1}+\frac{x}{T}\biggr)\exp\biggl(C_2\frac{\log x}{\log \log x}\biggr).
\end{align*}
Since $T=x^{1-\sigma_1}$, it follows that
$$
F_f(x)-\sum_{j=k}^{2k-1} \Res\biggl(D_f(s) \frac{x^s}{s}, \frac{1}{j} \biggr) \ll x^{\sigma_1} \exp\biggl(C_2\frac{\log x}{\log \log x}\biggr) \ll x^{1/2k}\exp\biggl(C\frac{\log x}{\log \log x}\biggr),
$$
where $C=C_2+1$.

Finally, recall from equation~\eqref{eq:G} that
$$
G_f(x) = \sum_{j=k}^{2\ell} \Res\biggl(D_f(s) \frac{x^s}{s}, \frac{1}{j} \biggr) = \sum_{j=k}^{2k-1} \Res\biggl(D_f(s) \frac{x^s}{s}, \frac{1}{j} \biggr)+O(x^{1/2k}(\log x)^{O(1)}),
$$
from which we conclude the desired upper bound
\[
E_f(x) =F_f(x)-G_f(x)\ll x^{1/2k}\exp\biggl(C\frac{\log x}{\log \log x}\biggr).\qedhere
\]
\end{proof}

\section*{Acknowledgments}

The authors thank Fatma Çiçek, Michael J.~Mossinghoff, Jan-Christoph Schlage-Puchta, and Tim Trudgian for helpful discussions. The authors also thank the anonymous referees for their valuable comments and suggestions. The first author was supported in part by a Natural Sciences and Engineering Council of Canada (NSERC) Discovery Grant. The second author was supported in part by an NSERC fellowship.

\bibliographystyle{abbrv}
\bibliography{main}

\end{document}